\documentclass[11pt,reqno]{amsart}
\usepackage{amsfonts, amsmath, amssymb, amscd, amsthm, bm}
\usepackage{amsrefs}
\def\qedbox{\hbox{$\rlap{$\sqcap$}\sqcup$}}
\def\qed{\nobreak\hfill\penalty250 \hbox{}\nobreak\hfill\qedbox}
\usepackage{enumerate}
\allowdisplaybreaks[4]
\usepackage{cases}
\usepackage{url}
\usepackage[linktocpage=true,colorlinks,citecolor=magenta,linkcolor=blue,urlcolor=magenta]{hyperref}

\usepackage{multicol}
\usepackage{comment}
\usepackage[title]{appendix}

\newtheorem{thm}{Theorem}[section]

\newtheorem{lem}[thm]{Lemma}
\newtheorem{prop}[thm]{Proposition}
\theoremstyle{definition}

\theoremstyle{theorem}
\newtheorem{rem}[thm]{Remark}
\newtheorem{ex}[thm]{Example}

\theoremstyle{claim}

\numberwithin{equation}{section}
\textheight=23cm \textwidth=15cm
\parskip = 0.165cm
\topmargin=-0.1cm \oddsidemargin=0cm \evensidemargin=0cm

\def\p{\partial}
\def\R{\mathbb{R}^{n+p}(c)}
\def\H{\mathbb{H}^{n+p}(-1)}
\def\S{\mathbb{S}^{n+p}(1)}
\def\et{\tilde{e}}
\def\th{\tilde{h}}
\def\tH{\tilde{H}}
\def\tg{\tilde{g}}
\def\tom{\tilde{\omega}}
\def\tt{\tilde{\theta}}
\def\tn{\widetilde{\nabla}}
\def\tR{\tilde{R}}

\def\la{\lambda}

\def\l{\langle}
\def\r{\rangle}

\def\Ric{\mbox{Ric}}
\DeclareMathOperator\tr{tr}
\DeclareMathOperator\arsinh{arsinh}
\begin{document}

\title{Sharp Reilly-type inequalities for submanifolds in space forms}

\author[H. Chen ]{Hang Chen}
\address{Department of Applied Mathematics, Northwestern Polytechnical University, Xi' an 710072,  P. R. China}
\email{\href{mailto:chenhang86@nwpu.edu.cn}{chenhang86@nwpu.edu.cn}}

\author[X. Wang]{Xianfeng Wang}
\address{School of Mathematical Sciences and LPMC,
Nankai University,
Tianjin 300071,  P. R. China}
\email{\href{mailto:wangxianfeng@nankai.edu.cn}{wangxianfeng@nankai.edu.cn}}

\begin{abstract}
Let $M$ be an $n(>2)$-dimensional closed orientable submanifold in an $(n+p)$-dimensional space form $\R$. We obtain an optimal upper bound for the second eigenvalue of a class of elliptic operators on $M$ defined by $L_{T}f=-\mbox{div}(T\nabla f)$, where $T$ is a general symmetric, positive definite and divergence-free $(1,1)$-tensor on $M$. The upper bound is given in terms of an integration involving $\tr T$ and $|H_T|^2$, where $\tr T$ is the trace of the tensor $T$ and $H_T=\sum_{i=1}^nA(Te_i,e_i)$ is a normal vector field associated with $T$ and the second fundamental form $A$ of $M$. Furthermore, we give the sufficient and necessary conditions when the upper bound is attained. Our main theorem can be viewed as an extension of the famous ``Reilly inequality''. The operator $L_{T}$ can be regarded as a natural generalization of the well-known operator $L_r$ which is the linearized operator of the first variation of the $(r+1)$-th mean curvature for hypersurfaces in a space form. As applications of our main theorem, we generalize the results of  Grosjean (\cite{Gro00}) and Li-Wang (\cite{LW12})  for  hypersurfaces to higher codimension case.
\end{abstract}

\keywords {Reilly-type inequality, eigenvalue estimate, elliptic operator, $r$-mean curvature, submanifolds.}

\subjclass[2000]{58C40, 53C42, 35P15.}

\maketitle

\section{Introduction}
Let $M$ be an $n(\geq2)$-dimensional closed orientable submanifold immersed in $\R$, where $\R$ is the $(n+p)$-dimensional simply connected space form of constant curvature $c$ and  represents the Euclidean space $\mathbb{R}^{n+p}$, the hyperbolic space $\H$ and the unit sphere $\S$ for $c=0, -1$ and $1$ respectively.

In \cite{ESI00}, A. El Soufi and S. Ilias obtained a sharp upper bound for the second eigenvalue of the Schr{\"o}dinger operator $L_s=-\Delta+q$, where $\Delta$ is the Laplacian on $M$ and $q\in C^{\infty}(M)$. They proved that
\begin{equation}\begin{aligned}\label{SI}
\lambda_2^{L_s}\leq\frac{1}{V(M)}\int_M[n(H^2+c)+q]\,dv_M,
\end{aligned}\end{equation}
where $H$ is the length of the mean curvature vector. Moreover, if $n\geq3$, then equality holds in \eqref{SI} if and only if $q$ is constant and $M$ is minimally immersed in a geodesic sphere of radius $r_c$ of $\R$  with $r_0=\big(n\big/\lambda_2^{\Delta}\big)^{1/2}, r_1=\arcsin r_0$ and $r_{-1}=\arsinh r_0$. When $q=0$, (\ref{SI}) becomes
\begin{equation}\begin{aligned}\label{Rei}
\lambda_2^{\Delta}\leq\frac{1}{V(M)}\int_M[n(H^2+c)]\,dv_M,
\end{aligned}\end{equation}
which is called ``Reilly inequality" as it was first proved by R. Reilly for submanifolds of $\mathbb{R}^{n+p}$ in  \cite{Rei77} and was proved by A. El Soufi and S. Ilias
in \cite{ESI92} for submanifolds of $\H$.  If $M$ is a minimal hypersurface or a hypersurface with constant mean curvature in $\mathbb{R}^{n+1}(c), q=-S-nc$,  where $S$ denotes the squared norm of the second fundamental form, then $L_s$ is the Jacobi operator of $M$ and its spectral behavior is related to the stability of $M$.

Later, J.-F. Grosjean \cite{Gro00} generalized the ``Reilly inequality" (cf.  \cite{Rei77} and \cite{ESI00}) to the $L_r$ operator on hypersurfaces of space forms.
Here  $T_r$  is the $r$-th Newton transformation arising from the shape operator $A$, and  the operator $L_r$  is the linearized operator of the first variation of the $(r+1)$-th mean curvature arising from variations of an immersed hypersurface $M\subset\mathbb{R}^{n+1}(c)$ (see Section 2.3 for details). Under some assumptions, $L_r$ is elliptic and its spectrum is closely related to the stability of hypersurfaces with constant $r$-th mean curvature in space forms. For more details of the properties of the $L_r$ operator, we refer the readers to \cites{AdCR93,CL07,CY77,Rei73,Ros93} and the references therein. In \cite{Gro00}, Grosjean proved that

\noindent\textbf{Theorem A.}~(Theorem 1.1 in \cite{Gro00})
	Let $(M, g)$ be a closed orientable Riemannian manifold of dimension $n\geq2$ and $x$ be a convex isometric immersion of $(M, g)$ in $\mathbb{R}^{n+1}(c)$. If $H_{r+1}>0$ for $r\in\{0, \ldots, n-2\}$, then we have
\begin{equation}
	\begin{aligned}\label{eqG}
	\la_2^{L_r}\leq\frac{(n-r){n\choose r}}{V(M)}\int_M\frac{H_{r+1}^2+cH_r^2}{H_r}\,dv_M.
	\end{aligned}
\end{equation}
	Moreover, equality holds if $x(M)$ is an umbilical sphere. And for $r<n-2$, if equality holds, then $x(M)$ is an umbilical sphere.

When $r=1$,  there exists a natural operator $J_s$ associated with $L_1$, more precisely, $J_s=L_1-S_1S_2+3S_3-(n-1)S_1$ and $L_1(f)=-\sum_{i,j}(nH\delta_{ij}-h_{ij})f_{ij}, ~\forall f\in C^{\infty}(M)$, where $f_{ij}$ is the second covariant derivative of $f$, $h_{ij}$ is the component of the second fundamental form of the hypersurface, $S_r$ is the $r$-th ($r=1,2,3$) mean curvature of the hypersurface, see Section 2.3 for the definition. $J_s$ is the Jacobi operator of hypersurfaces with constant scalar curvature in a unit sphere $\mathbb{S}^{n+1}(1)$. In \cite{LW12}, H. Li and the second author obtained a sharp estimate for the second eigenvalue of $J_s$ without the convexity assumption, they proved that

\noindent\textbf{Theorem B.}~(Theorem 1.4 in  \cite{LW12})
	Let $M$ be an $n$-dimensional closed orientable hypersurface  with
	constant scalar curvature $n(n-1)r=n(n-1)(1+H_2),~H_2> 0$, in ~$\mathbb{S}^{n+1}(1), n\geq5.$ Then the Jacobi operator $J_s$ is elliptic and its second eigenvalue $\la_2^{J_s}$ satisfies
	\begin{equation*}
	\la_2^{J_s}\leq0,
	\end{equation*}
	where the equality holds if and only if $M$ is totally umbilical and non-totally geodesic.

In fact, from the proof of Theorem B, one can see that the authors did not use the condition of ``$H_2$ being constant'' and one can get the same estimate as in \eqref{eqG} for $L_1$ without the convexity assumption by adapting the arguments in \cite{LW12}.

We notice that the inequalities  \eqref{SI} and \eqref{Rei} were proved for submanifolds in space forms, while the results in Theorem A and Theorem B are in the setting of hypersurfaces in space forms, a natural question is the following:

\noindent\textbf{Problem.} In higher codimension case, can we get sharp inequalities analogous to \eqref{SI} and \eqref{Rei} for some general elliptic operators? Can we generalize Theorem A and Theorem B to higher codimension case?

In this paper, we give an affirmative answer to the problem stated above. We obtain a sharp upper bound for the second eigenvalue of some general elliptic operators on
closed submanifolds in space forms,
and we can deduce Theorem A and Theorem B as corollaries of our theorems.

Let $M$ be an $n$-dimensional  closed orientable submanifold in an $(n+p)$-dimensional space form $\R$ and $\{e_1,\cdots, e_{n+p}\}$ be an orthonormal frame of $\R$ such that $\{e_1,\cdots, e_{n}\}$ are tangent to $M$, $\{e_{n+1},\cdots, e_{n+p}\}$ are normal to $M$. Denote $A=(h_{ij}^\alpha)$ the second fundamental form of $M$ in $\R$.
Under the frame defined above, generally, given a symmetric, divergence-free $(0,2)$-tensor $T=(T_{ij})$ on $M$ ($T$ can also be regarded as a $(1,1)$-tensor), we define a differential operator $L_T$ as follows:
\begin{equation}\label{LT}
 L_Tf=-\mbox{div}(T\nabla f)=-\sum_{i,j}T_{ij}f_{ij}, \quad f\in C^\infty(M),
\end{equation}
where $f_{ij}$ is the second covariant derivative of $f$.
It is not hard to show that $L$ is self-adjoint since $T$ is divergence-free. $L_T$ is elliptic if and only if $T$ is positive definite.   When $L_T$ is elliptic,  the first eigenvalue of $L_T$ is $0$  obviously and the corresponding eigenfunctions are nonzero constant functions. The minus symbol in the definition \eqref{LT} is to ensure  that the operator is positive-definite. In this case,  $\lambda$ is an  eigenvalue of $L_r$ if there exists a non-zero function $u$ satisfying $L_ru=\lambda u$.
We define a normal vector field $H_T$ associated with $T$ by
\begin{equation}\label{DEFHT}
H_T=\sum_{i=1}^nA(Te_i,e_i)=\sum_{\alpha, i, j}h_{ij}^{\alpha}T_{ij}e_{\alpha}.
\end{equation}
$M$ is called \emph{$T$-minimal} in  $\R$ if $H_T\equiv0$.  The concept \emph{$T$-minimal} is a natural generalization of   \emph{minimal} and  \emph{$r$-minimal} (see Section 2.3).
We obtain the following sharp estimate for the upper bound of the second eigenvalue of the operator $L_T$:
\begin{thm}\label{thm1.1}
	Let $M$ be an $n(>2)$-dimensional  closed orientable submanifold in an $(n+p)$-dimensional space form $\R$. Let $T$ be a symmetric, divergence-free $(1,1)$-tensor on $M$. Assume that $T$ is positive definite, and $T'=(\tr T) I-2T$ is semi-positive definite. Then we have the following sharp estimate for the second eigenvalue of $L_{T}$:
	\begin{equation}\begin{aligned}\label{eq_thm1}
	\la_2^{L_{T}}\leq\frac{1}{V(M)}\int_M\left(c\tr T+\frac{|H_T|^2}{\tr T}\right)\,dv_M.
	\end{aligned}\end{equation}
	The equality  in \eqref{eq_thm1} holds if the following two conditions hold: $(1)$ $\tr T$ is constant; $(2)$ $M$ is $T$-minimal in a geodesic sphere $\Sigma_c$ of $\R$, where the geodesic radius $r_c$ of $\Sigma_c$ is given by \begin{equation*}
	r_0=\left(\frac{\tr T}{\lambda_2^{L_T}}\right)^{1/2},\quad r_1=\arcsin r_0,\quad r_{-1}=\arsinh r_0.
	\end{equation*}
	Moreover, if $T'$ is positive definite and $H_T$ is not identically zero, then the equality in \eqref{eq_thm1} holds  if and only if both the two conditions $(1)$ and $(2)$ hold.
\end{thm}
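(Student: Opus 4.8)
The plan is to use the variational characterization of $\la_2^{L_T}$ via the min-max principle, testing with a cleverly chosen vector-valued map into the ambient space. Concretely, for the Euclidean case $c=0$, I would look for test functions of the form $\phi^i = \psi \circ \Phi^i$, where $\Phi\colon M\to\mathbb{R}^{n+p}$ is (after a Hersch-type center-of-mass argument) adjusted so that each $\phi^i$ is $L_2$-orthogonal to the constants with respect to the weight $dv_M$. The classical trick, going back to Hersch and used by El Soufi--Ilias, is that such an orthogonality can always be arranged by composing the position vector with a conformal transformation of the target (or the sphere), using a topological degree/fixed-point argument; here the relevant target is the sphere $\mathbb{S}^{n+p}$ for $c=1$, $\mathbb{R}^{n+p}$ for $c=0$, and $\mathbb{H}^{n+p}$ for $c=-1$, all handled uniformly after a stereographic-type model. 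Once the $n+p$ functions $\phi^1,\dots,\phi^{n+p}$ are admissible test functions, the min-max principle gives
\begin{equation*}
\la_2^{L_T}\sum_i\int_M\phi^i{}^2\,dv_M\leq\sum_i\int_M\l T\nabla\phi^i,\nabla\phi^i\r\,dv_M=\sum_i\int_M T_{jk}\phi^i_{,j}\phi^i_{,k}\,dv_M.
\end{equation*}

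The next step is to compute both sides using the structure equations of the immersion. For the sphere model with the standard embedding into $\mathbb{R}^{n+p+1}$ (or the analogous hyperboloid model), one has the well-known identity $\sum_i|\nabla\phi^i|^2$-type expressions involving $\tr T$, and crucially $\sum_i \phi^i_{,j}\phi^i_{,k}$ should collapse (using $\sum_i \phi^i\phi^i = $ const and the Gauss formula) to something like $g_{jk}$ minus a second-fundamental-form contribution. I expect the numerator to become
\begin{equation*}
\sum_i\int_M\l T\nabla\phi^i,\nabla\phi^i\r\,dv_M = \int_M\Big(\tr T + \tfrac{\text{(something)}}{\cdots}\Big)\,dv_M,
\end{equation*}
and the point of the hypothesis that $T'=(\tr T)I-2T$ is semi-positive definite is exactly to control a cross term: after completing the square in the ambient coordinates one gets an inequality
\begin{equation*}
\sum_i\l T\nabla\phi^i,\nabla\phi^i\r \leq c\,\tr T\cdot|\Phi_{\tan}|^2 + \frac{|H_T|^2}{\tr T} + \l T', \text{(nonneg.\ quadratic form)}\r,
\end{equation*}
so that the semi-definiteness of $T'$ lets us drop the last term. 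Meanwhile the denominator $\sum_i\int_M\phi^i{}^2\,dv_M$ is a constant (the squared radius or $1$) times $V(M)$ because the $\phi^i$ are the components of a map to a sphere of fixed radius. Dividing, one obtains precisely \eqref{eq_thm1}.

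For the equality discussion I would argue in two directions. If $(1)$ and $(2)$ hold, i.e.\ $\tr T$ is constant and $M$ is $T$-minimal in a geodesic sphere $\Sigma_c$ of the prescribed radius $r_c$, then the coordinate functions of $\Sigma_c$ restricted to $M$ are honest eigenfunctions of $L_T$ with eigenvalue $\tr T/r_0^2$; since they are $L^2(dv_M)$-orthogonal to constants (again by a center-of-mass normalization, or because $H_T\equiv 0$ in the ambient geodesic sphere forces the mean of each coordinate to vanish), they are admissible for the $\la_2$ variational problem and the Rayleigh quotient equals the right-hand side of \eqref{eq_thm1}, giving equality together with the value of $\la_2^{L_T}$. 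Conversely, assuming $T'$ is positive definite and $H_T\not\equiv 0$, equality forces every inequality in the chain above to be an equality: the dropped term $\l T', (\text{nonneg.\ form})\r$ must vanish, and since $T'>0$ this kills the quadratic form, which should say the tangential part of $\Phi$ is parallel (in the ambient sense) — equivalently $M$ lies in a geodesic sphere; the completed-square equality then pins down that $M$ is $T$-minimal in that sphere; and equality in the min-max forces $\tr T$ to be constant (since the test functions must be genuine eigenfunctions, $L_T\phi^i = \la_2^{L_T}\phi^i$, and comparing with the structure equation of the sphere yields $\tr T = \la_2^{L_T} r_0^2$). The main obstacle I anticipate is the algebra of the completion-of-square step in the numerator: correctly isolating the term $|H_T|^2/\tr T$ and identifying the leftover quadratic form as $\l T', (\cdot)\r$ up to the right positive factor requires carefully bookkeeping the Gauss formula contributions for each of the three space-form models simultaneously, and making sure the Hersch normalization is compatible with the weighted measure $dv_M$ rather than the unweighted one. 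A secondary subtlety is ensuring the ``if'' direction's eigenfunctions are truly orthogonal to constants, which is where $T$-minimality in the geodesic sphere (not merely in $\R$) is essential.
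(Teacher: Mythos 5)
Your overall strategy is the paper's: normalize the immersion by a conformal map into $\mathbb{S}^{n+p}(1)\subset\mathbb{R}^{n+p+1}$ so that the $n+p+1$ coordinates $\Phi^A$ are orthogonal to constants (Lemma \ref{lem3.1}), feed them into the min-max principle, complete a square to isolate $|H_T|^2/\tr T$, and use the semi-positivity of $T'$ to discard a leftover term. However, the proposal has a genuine gap precisely at the step you flag as "the main obstacle": you never produce the identity that converts $\sum_A\int_M\Phi^A L_T\Phi^A\,dv_M=\int_M e^{2\rho}\tr T\,dv_M$ into the claimed bound, and the form you guess for it is wrong in two ways. First, since $\Phi$ is conformal one has exactly $\sum_A\Phi^A_{\,i}\Phi^A_{\,j}=e^{2\rho}\delta_{ij}$ — there is no second-fundamental-form correction at that stage; all the geometry enters through the formula for $e^{2\rho}\tr T$, which is Proposition \ref{prop3.2}:
\begin{equation*}
e^{2\rho}\tr T=c\tr T+2L_T\rho-\tr T\,\bigl|(\bar{\nabla}\rho)^\bot\bigr|^2+2\langle H_T,(\bar{\nabla}\rho)^\bot\rangle-T'(\nabla\rho,\nabla\rho),
\end{equation*}
obtained by comparing the contracted Gauss equations of $x$ and $\Gamma\circ x$ with the conformal change of the Ricci tensor, tracing against both $\delta_{ij}$ and $T_{ij}$, and eliminating $\Delta\rho$; this is exactly where the hypothesis $n>2$ is used (one divides by $n-2$). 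Second, your displayed pointwise inequality puts the $T'$-term on the right of a "$\leq$" with a plus sign, in which case semi-definiteness would not let you drop it; in the correct identity it enters as $-T'(\nabla\rho,\nabla\rho)\le 0$. Moreover the term $2L_T\rho$ has no pointwise sign and only disappears after integration, because $L_T$ is self-adjoint and annihilates constants — so the estimate is intrinsically integral, not pointwise.

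The equality analysis is also underdeveloped at the points where the higher-codimension case is actually delicate. Vanishing of $T'(\nabla\rho,\nabla\rho)$ with $T'>0$ gives $\nabla\rho=0$ on $M$ (not that "the tangential part of $\Phi$ is parallel"), i.e.\ $M$ lies in a level set of $\rho$; to identify that level set as a geodesic sphere, to show $(\bar{\nabla}\rho)^\bot$ is proportional to its unit normal (whence $T$-minimality via $H_T=(\tr T)(\bar{\nabla}\rho)^\bot$), and to pin down the radius, the paper must write $\Gamma=\gamma_g\circ\pi_0\circ\pi$ explicitly in each of the three models, perform a dimension-reduction when $g$ is not radially aligned (cases $c=0,-1$), and — crucially — observe that the extra coordinate $\Phi^0$ is constant on $M$ and hence must vanish identically (else it would be a nonzero constant second eigenfunction, contradicting the simplicity of $\la_1$); this fixes $g^0$ and yields $\bar{\nabla}_\nu\rho=k$ in Lemmas \ref{lem5-1}, \ref{lem5-0}, \ref{lem5--1}, from which $\la_2^{L_T}=c'\tr T$ and the stated radius follow. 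The hypothesis $H_T\not\equiv 0$ is what rules out $g=0$ (and $r_1=\pi/2$) when $c=1$. Your sketch of the sufficiency direction via the Takahashi-type identity $L_Tx=c'(\tr T)x$ is fine and matches Proposition \ref{lem5.1}, but the necessity direction as proposed would not close without the ingredients above.
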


\begin{rem}
(1) Theorem \ref{thm1.1} can be regarded as an extension of the ``Reilly inequality'' \eqref{Rei}. We also generalize Theorem \ref{thm1.1} to Schr\"{o}dinger-type operators, see Theorem \ref{thm3.1} for the details, which can be regarded as an extension of the  inequality \eqref{SI}.

(2)
We note that the condition ``$H_T$ is not identically zero'' is not essential and it is only used in the case $c=1$. This condition is used to exclude the case that $M$ is $T$-minimal in $\mathbb{S}^{n+p}(1)$ but $M$ is not contained in any totally geodesic $\mathbb{S}^{n+p-1}(1)$ of $\mathbb{S}^{n+p}(1)$.   In fact, if $H_T$ is identically zero, $T'$ is positive definite and the equality in \eqref{eq_thm1} holds,  then it is not hard to prove that the position vector of the immersion $x:M\to \mathbb{S}^{n+p}(1)$ satisfies that $L_T x=\la_2^{L_{T}} x$, then using an analogous argument to that in Proposition \ref{lem5.1}, we get that $M$ is $T$-minimal in $\mathbb{S}^{n+p}(1)$ with $\tr T=\la_2^{L_{T}}$.
\end{rem}

There are lots of natural tensors that satisfy the assumptions in Theorem \ref{thm1.1}. For instance, when $T=T_r$, the $r$-th Newton transformation arising from the shape operator $A$ of a hypersurface in a space form,  $L_T$ is just the well-known $L_r$ operator. In higher codimension case, i.e., $p>1$, when $r$ is even, we can also define a $(0,2)$-tensor $T_r$ and an $L_r$ operator on the submanifolds, which can be regarded as a natural generalization of the $T_r$ operator for hypersurfaces in space forms, see Section 2.3 for the detailed definitions of $T_r$, $L_r$ $H_r$ and $\mathbf{H}_{r+1}$. In Section 4,  by applying Theorem \ref{thm1.1} to the operators $L_r$ for the case $p=1$ or the case $p>1$ and $r$ is even, we obtain

\begin{thm}\label{cor3.15}
	Let $M$ be an $n(\geq2)$-dimensional  closed orientable submanifold in an $(n+p)$-dimensional space form $\R$. For each $r\in\{0,\cdots, n-2\}$ (when $p>1$ we assume $r=2k$ is an even integer),  assume that $T_{r}$ is positive definite and  $(\sum_{k,\alpha}T_{r-1\, kj}^\alpha h_{ki}^\alpha)$ is semi-positive definite.
	Then $L_r$ is elliptic and
	\begin{equation}\begin{aligned}
	\la_2^{L_r}\leq\frac{(n-r)\binom{n}{r}}{V(M)}\int_M\frac{cH_r^2+|\mathbf{H}_{r+1}|^2}{H_r}\,dv_M.\label{eq3.47}
	\end{aligned}\end{equation}
	The equality in \eqref{eq3.47} holds if both the following two conditions hold.
	
	(1) $M$ has constant $r$-mean curvature;
	
	(2) $M$ is $r$-minimal in a geodesic sphere $\Sigma_c$ of $\R$, where the geodesic radius $r_c$ of $\Sigma_c$ is given by
	\begin{equation*}
	r_0=\left(\frac{(n-r)S_r}{\lambda_2^{L_r}}\right)^{1/2},\quad r_1=\arcsin r_0,\quad r_{-1}=\arsinh r_0.
	\end{equation*}
	When $r<n-2$ and $\mathbf{H}_{r+1}$ is not identically zero,  (1) and (2) are also the necessary conditions for the equality.
\end{thm}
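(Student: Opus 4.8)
The strategy is to apply Theorem~\ref{thm1.1} with $T=T_r$ and then rewrite the resulting estimate and its equality conditions in terms of $r$-mean curvatures, using the algebraic properties of the Newton transformations established in Section~2.3. Since for $n=2$ the only admissible value is $r=0$, where $T_0=I$, $L_0=-\Delta$, and \eqref{eq3.47} reduces to the Reilly inequality \eqref{Rei}, we may assume $n\ge 3$ below.

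\emph{Checking the hypotheses of Theorem~\ref{thm1.1}.} By the properties recorded in Section~2.3 (this is exactly where $p=1$, or $p>1$ with $r$ even, together with the space-form condition, are used, through the Codazzi equation), $T_r$ is a symmetric, divergence-free $(1,1)$-tensor and $L_rf=-\mathrm{div}(T_r\nabla f)$. Positive definiteness of $T_r$ is assumed, so it remains only to see that $T'_r=(\tr T_r)I-2T_r$ is semi-positive definite. Combining the recursion $(T_r)_{ij}=S_r\delta_{ij}-\sum_{k,\alpha}T_{r-1\,kj}^\alpha h_{ki}^\alpha$ with the trace identity $\tr T_r=(n-r)S_r$ from Section~2.3 gives the pointwise identity
\begin{equation*}
T'_r=(n-r-2)\,S_r\,I+2\Big(\sum_{k,\alpha}T_{r-1\,kj}^\alpha h_{ki}^\alpha\Big)
\end{equation*}
for $r\ge 1$ (and $T'_0=(n-2)I$ when $r=0$). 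Now $T_r>0$ forces $\tr T_r>0$, hence $S_r>0$ since $0\le r\le n-2<n$; as $r\le n-2$ the first summand is semi-positive definite, while the second is semi-positive definite by hypothesis. Thus $T'_r\ge 0$, so Theorem~\ref{thm1.1} applies; in particular $H_r=S_r/\binom{n}{r}>0$, so the right-hand side of \eqref{eq3.47} is well defined.

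\emph{From Theorem~\ref{thm1.1} to \eqref{eq3.47} and its equality.} Theorem~\ref{thm1.1} gives $\la_2^{L_r}\le\frac1{V(M)}\int_M\big(c\,\tr T_r+|H_{T_r}|^2/\tr T_r\big)\,dv_M$; substituting $\tr T_r=(n-r)\binom{n}{r}H_r$ and $H_{T_r}=\sum_i A(T_re_i,e_i)=(n-r)\binom{n}{r}\mathbf{H}_{r+1}$ (both from Section~2.3) turns this into \eqref{eq3.47}. For the equality discussion, condition~$(1)$ of Theorem~\ref{thm1.1} (``$\tr T_r$ constant'') is equivalent to ``$H_r$ constant'', i.e.\ to $M$ having constant $r$-mean curvature, and condition~$(2)$ (``$M$ is $T_r$-minimal in a geodesic sphere $\Sigma_c$'') is precisely ``$M$ is $r$-minimal in $\Sigma_c$'' since $H_{T_r}\equiv0\iff\mathbf{H}_{r+1}\equiv0$; moreover the radius formula there becomes $r_0=(\tr T_r/\la_2^{L_r})^{1/2}=((n-r)S_r/\la_2^{L_r})^{1/2}$, which is the claimed one. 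This gives the sufficiency of $(1)$ and $(2)$. When $r<n-2$ and $\mathbf{H}_{r+1}\not\equiv0$, the displayed identity shows $T'_r\ge(n-r-2)S_rI>0$, so $T'_r$ is positive definite, and $\mathbf{H}_{r+1}\not\equiv0$ is the same as $H_{T_r}\not\equiv0$; hence the ``if and only if'' part of Theorem~\ref{thm1.1} applies and forces $(1)$ and $(2)$ to be necessary as well.

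The part that will take the most care is not this deduction but the preparatory work in Section~2.3 that makes the dictionary above legitimate in the higher-codimension case $p>1$, $r=2k$: one has to verify that $T_{r-1}$, $T_r$, $H_r$ and $\mathbf{H}_{r+1}$ there really satisfy the recursion, both trace identities $\tr T_r=(n-r)S_r$ and $H_{T_r}=(n-r)\binom{n}{r}\mathbf{H}_{r+1}$, and the divergence-free identity, in exactly the form used above. Granting those, Theorem~\ref{cor3.15} follows from Theorem~\ref{thm1.1} by the routine substitutions indicated.
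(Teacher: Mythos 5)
Your proposal is correct and follows essentially the same route as the paper: verify that $T_r$ satisfies the hypotheses of Theorem~\ref{thm1.1} via the identities $\tr T_r=(n-r)S_r$ and $T_{r\,ij}=S_r\delta_{ij}-\sum_{k,\alpha}T_{r-1\,kj}^\alpha h_{ki}^\alpha$ from Lemma~\ref{lem2.1}, handle $n=2$ separately as the Reilly inequality, and then translate the conclusion and equality conditions using $H_{T_r}=(r+1)\mathbf{S}_{r+1}=(n-r)\binom{n}{r}\mathbf{H}_{r+1}$. (Your displayed formula for $T'_r$, with the factor $2$ on the second summand, is in fact the correct expansion; the paper's corresponding line omits that factor, but this is immaterial since semi-positive definiteness is preserved either way.)
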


\begin{rem}\label{rem3.18}
	 (1)  When $p=1$, by using \eqref{eq2.26}, we have $$(\sum_{k,\alpha}T_{r-1\, kj}^\alpha h_{ki}^\alpha)=(\sum_{k,\alpha}T_{r-1\, kj} h_{ki})=\sum_{k=1}^{r}(-1)^{k-1}S_{r-k}A^{k},$$ where $A=(h_{ij}).$ (cf. \cite{BC97, Gro00}). If $H_{r+1}>0$ and $M$ is convex, then  $T_r$ satisfies the conditions in Theorem \ref{cor3.15}, we can get Theorem A as a corollary of Theorem \ref{cor3.15}.
	
	(2) When $r$ is even,  from Lemma \ref{lem2.3}  in Appendix A, we know that $(\sum_{k,\alpha}T_{r-1\, kj}^\alpha h_{ki}^\alpha)$ is an intrinsic tensor on $M$ corresponding to the Lovelock curvatures. In particular, when $r=2$, we have
	\begin{equation}\label{eq3.50}
	(\sum_{k,\alpha}T_{r-1\, kj}^\alpha h_{ki}^\alpha)=\Ric-(n-1)c I,
	\end{equation}
	in this case, the condition ``$(\sum_{k,\alpha}T_{r-1\, kj}^\alpha h_{ki}^\alpha)$ is semi-positive definite'' is equivalent to $\Ric\geq (n-1)c I$. We give some examples in Section 4 which satisfy the conditions in Theorem \ref{cor3.15} and the equality in \eqref{eq3.47}.
\end{rem}

Note that when $p>1$, one cannot define the operator $L_r$ if $r$ is odd. However, if the mean curvature vector is nowhere zero, we choose $e_{n+1}=\frac{\mathbf{H}}{H}$, where $H=|\mathbf{H}|$ is the mean curvature, $e_{n+1}$ is usually called the principal normal. We can define a tensor $T$ by
\begin{equation}\label{T1}T_{ij}=nH\delta_{ij}-h_{ij}^{n+1},
\end{equation}
where $h^{n+1}_{ij}$ denotes the component of the second fundamental form in the direction of the principal normal. Then its corresponding  operator is given by
\begin{equation}\label{L1}
L f=-\sum_{i,j}(nH\delta_{ij}-h_{ij}^{n+1})f_{ij}, ~\forall f\in C^{\infty}(M).
\end{equation}
This operator is a natural generalization of the operator $L_1$ on hypersurfaces in space forms.
In \cite{GL13}, X. Guo and H. Li used the properties of $L$ to prove a rigidity theorem for submanifolds with constant scalar curvature and parallel normalized mean curvature vector field in a unit sphere. In Section 5,  by applying Theorem \ref{thm1.1} to the tensor $T$ defined by \eqref{T1} and the  operator $L$ defined by \eqref{L1}, we get an optimal upper bound for the second eigenvalue of $L$ by assuming that $n\geq4$ and $H_2>0.$ Moreover, we prove that the equality holds if and only if $M$ is a minimal submanifold in a geodesic sphere of $\R$. We note that the  conclusion that $M$ is minimal is  stronger  than
 the conclusion that $M$ is $T$-minimal.  More precisely, we prove the following result.
\begin{thm}\label{thm1.2}
	Let $M$ be an $n$-dimensional  closed orientable submanifold in an $(n+p)$-dimensional space form $\R$. We assume that $n\geq4,~H_2>0.$ Then the mean curvature vector $\mathbf{H}\neq 0$. We choose an orthonormal frame $\{e_1,\cdots, e_{n+p}\}$ as above.  Then the operator $L$ defined by \eqref{L1} is elliptic, and the second eigenvalue of $L$ satisfies
	\begin{equation}\label{eqthm}
	\la_2^{L}\leq\frac{n(n-1)}{V(M)}\int_M\bigg[cH+\frac{1}{H}\Big(H_2+\frac{|\tau|^2}{n(n-1)}\Big)^2+\frac{1}{n^2(n-1)^2H}\sum\limits_{\alpha\geq n+2}\Big(\sum\limits_{i,j}h_{ij}^{n+1}h_{ij}^{\alpha}\Big)^2\bigg]\,dv_M,
	\end{equation}
where $|\tau|^2=\displaystyle\sum_{\alpha\neq n+1}(h_{ij}^{\alpha})^2$.
	
Moreover, the equality in \eqref{eqthm} holds  if and only if $M$ is a minimal submanifold in a geodesic sphere $\Sigma_c$ of $\R$, where the geodesic radius $r_c$ of $\Sigma_c$ is given by  $$r_0=\left(\frac{n}{\lambda_2^\Delta}\right)^{1/2}, ~r_1=\arcsin r_0,~ r_{-1}=\arsinh r_0.$$
\end{thm}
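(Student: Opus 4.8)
The plan is to apply Theorem~\ref{thm1.1} to the symmetric tensor $T$ defined in \eqref{T1}, and then to do a little extra work for the equality discussion. First the preliminaries: by the Gauss equation $n(n-1)H_2=n^2H^2-S$ (with $S=|A|^2$), the hypothesis $H_2>0$ gives $n^2H^2=n(n-1)H_2+S>0$ everywhere, so $H>0$ everywhere and the mean curvature vector $\mathbf H$ is nowhere zero; hence the principal normal $e_{n+1}=\mathbf H/H$ is a globally defined smooth unit normal, the chosen frame is legitimate, and $T$ in \eqref{T1} is well defined. I record $\tr T=n\cdot nH-\sum_i h^{n+1}_{ii}=n(n-1)H$, $S_{n+1}:=\sum_{i,j}(h^{n+1}_{ij})^2$, and $|\tau|^2=S-S_{n+1}$.

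Next I would check the hypotheses of Theorem~\ref{thm1.1} (note $n\ge4>2$). Symmetry of $T$ is obvious. For $T$ divergence-free: the Codazzi equation in a space form makes the covariant derivative $h^{n+1}_{ij,k}$ symmetric in all of $i,j,k$, so $\sum_j h^{n+1}_{ij,j}=\sum_j h^{n+1}_{jj,i}$; since $\sum_j A(e_j,e_j)=n\mathbf H=nHe_{n+1}$ and $\langle\nabla^\perp e_{n+1},e_{n+1}\rangle=0$, the right side equals $nH_{,i}$, hence $\sum_j T_{ij,j}=nH_{,i}-nH_{,i}=0$. For the positivity conditions, let $\kappa_1\le\cdots\le\kappa_n$ be the eigenvalues of $(h^{n+1}_{ij})$, so $T$ has eigenvalues $nH-\kappa_i$ and $T'=(\tr T)I-2T$ has eigenvalues $n(n-3)H+2\kappa_i$. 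Here $\sum_i\kappa_i=nH>0$ and $2\sum_{i<j}\kappa_i\kappa_j=(nH)^2-S_{n+1}=n(n-1)H_2+|\tau|^2>0$, so from the elementary inequality $\sum_{i<j,\ i,j\ne\ell}\kappa_i\kappa_j\le\frac{n-2}{2(n-1)}\bigl(\sum_{i\ne\ell}\kappa_i\bigr)^2$ I get, for every $\ell$,
\[
0<(nH-\kappa_\ell)\bigl(\kappa_\ell+(n-2)H\bigr).
\]
Taking $\ell$ with $\kappa_\ell=\kappa_n\ge H>0$ forces $\kappa_n<nH$; then $nH-\kappa_\ell>0$, hence $\kappa_\ell>-(n-2)H$, for all $\ell$. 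Thus $-(n-2)H<\kappa_i<nH$, so $T$ is positive definite, and for $n\ge4$,
\[
n(n-3)H+2\kappa_i>\bigl(n(n-3)-2(n-2)\bigr)H=(n-1)(n-4)H\ge0,
\]
so $T'$ is positive definite; in particular the positive-definiteness of $T'$ needed for the ``only if'' part of Theorem~\ref{thm1.1} is in force.

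Now Theorem~\ref{thm1.1} gives $\lambda_2^{L}\le\frac1{V(M)}\int_M\bigl(c\tr T+|H_T|^2/\tr T\bigr)\,dv_M$, and it remains to identify the integrand. The $e_{n+1}$-component of $H_T$ is $\sum_{i,j}h^{n+1}_{ij}T_{ij}=n^2H^2-S_{n+1}$, and since $\sum_i h^\alpha_{ii}=\langle n\mathbf H,e_\alpha\rangle=0$ for $\alpha\ge n+2$, its $e_\alpha$-component is $-\sum_{i,j}h^{n+1}_{ij}h^\alpha_{ij}$; therefore $|H_T|^2=(n^2H^2-S_{n+1})^2+\sum_{\alpha\ge n+2}\bigl(\sum_{i,j}h^{n+1}_{ij}h^\alpha_{ij}\bigr)^2$. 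Since $n^2H^2-S_{n+1}=(n^2H^2-S)+|\tau|^2=n(n-1)H_2+|\tau|^2=n(n-1)\bigl(H_2+\frac{|\tau|^2}{n(n-1)}\bigr)$, substituting this together with $\tr T=n(n-1)H$ yields exactly \eqref{eqthm}.

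For the equality, the ``if'' direction is short: if $M$ is minimal in a geodesic sphere $\Sigma_c$ of $\R$, then $\mathbf H$ is a constant multiple of the unit normal $\nu$ of $\Sigma_c$, so $H$ is constant, $e_{n+1}=\nu$, $h^{n+1}_{ij}=H\delta_{ij}$ is the umbilic shape operator of $\Sigma_c$, and $T_{ij}=(n-1)H\delta_{ij}$; then the $H_T$ of $M$ inside $\Sigma_c$ is $(n-1)H\sum_{\alpha\ge n+2}(\sum_i h^\alpha_{ii})e_\alpha=0$, so conditions $(1)$ and $(2)$ of Theorem~\ref{thm1.1} hold and equality follows, and the radius matches because $T=(n-1)HI$ makes $L=(n-1)H\Delta$, whence $(\tr T/\lambda_2^{L})^{1/2}=(n/\lambda_2^\Delta)^{1/2}$. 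For the ``only if'' direction, $T'$ is positive definite and $|H_T|^2\ge(n^2H^2-S_{n+1})^2\ge n^2(n-1)^2H_2^2>0$, so $H_T$ is nowhere zero; by the ``only if'' part of Theorem~\ref{thm1.1}, equality forces $\tr T$ constant (hence $H$ constant) and $M$ to be $T$-minimal in a geodesic sphere $\Sigma_c$ of the stated radius. The main obstacle is to upgrade ``$T$-minimal in $\Sigma_c$'' to ``minimal in $\Sigma_c$'', which is genuinely stronger: from the equality analysis in the proof of Theorem~\ref{thm1.1} the centered position vector (suitably transformed when $c=\pm1$) satisfies an $L_Tx=\lambda_2^{L}x$-type identity, so $H_T$ is radial and $M$ lies in $\Sigma_c$ with center its center of mass; then, writing the second fundamental form of $M$ in $\R$ as that of $M$ in $\Sigma_c$ plus the umbilic part $\rho\,g\,\nu$ of $\Sigma_c$ and expressing $h^{n+1}_{ij}=\frac1H\bigl(\rho^2\delta_{ij}+\langle A^{M\subset\Sigma_c}(e_i,e_j),\mathbf H^{\Sigma_c}\rangle\bigr)$ (with $\mathbf H^{\Sigma_c}$ the mean curvature vector of $M$ in $\Sigma_c$), the $T$-minimality relation $\sum_{i,j}T_{ij}A^{M\subset\Sigma_c}(e_i,e_j)=0$ together with the Codazzi equations, the constancy of $H$ and $H_2>0$ should be pushed through a suitable integral identity to give $\mathbf H^{\Sigma_c}\equiv0$, i.e.\ $M$ minimal in $\Sigma_c$; Proposition~\ref{lem5.1} supplies the framework for this last step, after which $r_c$ comes out exactly as in the ``if'' direction.
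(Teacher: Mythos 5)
Your reduction to Theorem~\ref{thm1.1} is correct, and your verification that $T$ and $T'$ are positive definite is actually cleaner than the paper's: the paper proves $nH>|k_i|$ directly and then needs a separate Lagrange-multiplier lemma (Lemma~\ref{lem_alg}) to handle $n=4$, whereas your Newton--Maclaurin inequality applied to the $n-1$ eigenvalues $\{\kappa_i\}_{i\neq\ell}$ gives the two-sided bound $-(n-2)H<\kappa_i<nH$ in one stroke and settles all $n\ge4$ uniformly. The divergence-free check, the computation of $H_T$ and identification of the integrand in \eqref{eqthm}, the ``if'' direction of the equality, and the observation that $|H_T|^2\ge n^2(n-1)^2H_2^2>0$ (so the ``only if'' part of Theorem~\ref{thm1.1} applies) are all correct.

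The gap is in the final step of the ``only if'' direction, which is exactly the part the paper flags as requiring analysis beyond Theorem~\ref{thm1.1}: upgrading ``$T$-minimal in $\Sigma_c$'' to ``minimal in $\Sigma_c$''. You correctly identify the obstacle and even write down the right decomposition $h^{n+1}_{ij}=\frac1H\bigl(k^2\delta_{ij}+\langle A'_{ij},\mathbf H'\rangle\bigr)$, where $A'$ and $\mathbf H'$ are the second fundamental form and mean curvature vector of $M$ in $\Sigma_c$ and $k$ is the umbilic curvature of $\Sigma_c$; but you then defer to ``a suitable integral identity'' involving the Codazzi equations, which is not an argument and also points in the wrong direction --- no integration and no Codazzi equation is needed, the step is pointwise and purely algebraic. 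Concretely: pairing the $T$-minimality relation $\sum_{i,j}T_{ij}A'_{ij}=0$ with $\mathbf H'$ and multiplying by $H$ gives $0=n\bigl((n-1)H^2+|\mathbf H'|^2\bigr)|\mathbf H'|^2-\sum_{i,j}\langle A'_{ij},\mathbf H'\rangle^2$; by Cauchy--Schwarz the subtracted term is at most $|\mathbf H'|^2S'$ with $S'=S-nk^2$, and using $H^2=|\mathbf H'|^2+k^2$ together with the Gauss equation $n(n-1)H_2=n^2H^2-S$ the right-hand side is bounded below by $n(n-1)H_2\,|\mathbf H'|^2$, which forces $\mathbf H'\equiv0$ since $H_2>0$. (The paper runs the equivalent computation by rotating the normal frame through an angle $\theta$ toward $\mathbf H'$; either form closes the argument.) As written, your proposal does not establish the ``only if'' direction.
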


\begin{rem}
 When $p=1$,  we obtain the same estimate for the case $r=1$  in Theorem A without the convexity assumption.
By adapting the proof of Theorem \ref{thm1.2} and using the Newton-Maclaurin inequalities, one can prove that Theorem B is true for $n\geq4$, which gives a partial answer to the problem mentioned in Remark 4.3 of \cite{LW12}.
\end{rem}

The paper is organized as follows. In Section 2, we show some basic formulas for a submanifold $(M, g_M)$ in a Riemannian manifold $(N^{n+p}, g_N)$, including the relations between some geometric quantities of $M$ as a submanifold in  $(N^{n+p}, g_N)$ and their corresponding quantities of $M$ as a submanifold in $(N^{n+p}, \tilde{g}_N)$, where $\tilde{g}_N$ is conformal to $g_N$. These relations will be used in the proof of our main theorem.
In Section 3, we prove Theorem \ref{thm1.1} and generalize Theorem \ref{thm1.1} to a Schr{\"o}dinger-type operator $L_T+q$. In Section 4, we apply Theorem \ref{thm1.1} to the $L_r$ operator for the case $p=1$ or the case $p>1$ and $r$ is even, we also give some examples. In Section 5, we prove Theorem \ref{thm1.2}.

Our main contributions in this paper are in two aspects: first, we can deal with a class of very general elliptic operators for submanifolds in  space forms for any codimension. The key ingredient is to find the relations between some geometric quantities associated with the elliptic operator,  which is presented in Proposition \ref{prop3.2}; second, we prove that the inequality is sharp and give the sufficient and necessary conditions when the upper bound is attained. This part is more difficult as we are in the arbitrary codimension case. If the codimension is $1$, the sufficient and necessary conditions can be proved easily. To overcome the difficulties coming from the higher codimension case, we write down explicitly the conformal transformation used in the proof and analyze the equality case very carefully, and the key steps are contained in Lemmas \ref{lem5-1}, \ref{lem5-0} and \ref{lem5--1}. The most important observation in these lemmas is that we find that the the component $\Phi^0$ is $0$ on $M$. Moreover, in Theorem  \ref{thm1.2}, we prove that $M$ is  minimal in a geodesic sphere of $\R$, which is stronger than the conclusion $T$-minimal, so we cannot get Theorem \ref{thm1.2} by applying Theorem \ref{thm1.1} directly, and we need to do some further analysis for the equality.

\textbf{Acknowledgment}:
The first author was partially supported by NSFC Grant No. 11601426 and the Fundamental Research Funds for the Central Universities Grant No. 3102014JCQ01071.
The second author was partially supported by NSFC  Grant No. 11571185.
  The authors would like to thank Professor Haizhong Li for his constant support through the years. This work is partially done while the first author is visiting UCSB. He would like to thank UCSB math department for the hospitality, and he also would like to acknowledge financial support from China Scholarship Council and Top International University Visiting Program for Outstanding Young scholars of Northwestern Polytechnical University.

\section{Preliminaries and notations}
In this section, we give the relations between some geometric quantities of $M$ as a submanifold in  $(N^{n+p}, g_N)$ and their corresponding quantities of $M$ as a submanifold in $(N^{n+p}, \tilde{g}_N)$, where $\tilde{g}_N$ is conformal to $g_N$. Although the relations are well-known in the literature (cf. \cite{Che73a, Che74}), we give a brief proof of these relations for the reader's convenience as the relations  will be used in the proof of Theorem \ref{thm1.1}. Next, we also recall some basic formulas for submanifolds in space forms. At last, we recall  the definitions and properties of  the $L_r$ operator and the $r$-mean curvature.

We use the following convention on the ranges of indices except special declaration:
\begin{equation*}
1\leq i, j, k, \ldots \leq n;\quad
n+1\leq\alpha, \beta, \gamma, \ldots \leq n+p;\quad
1\leq A, B, C, \ldots \leq n+p.
\end{equation*}
\subsection{Conformal relations}
Let $M$ be an $n$-dimensional submanifold in an $(n+p)$-dimensional Riemannian manifold $(N^{n+p}, g_N)$, where $g_N$ is the metric of $N$. We denote the immersion from $M$ to $N$ by $x$, then $M$ has an induced metric $g_M=x^{\ast}g_N.$ We denote the Levi-Civita connections on $M$ and $N$ by $\nabla$ and $\bar{\nabla}$ respectively. Let $\{e_A\}_{A=1}^{n+p}$ be an orthonormal frame of $(N,g_N)$, where $\{e_i\}_{i=1}^n$ are tangent to $M$ and $\{e_{\alpha}\}_{\alpha=n+1}^{n+p}$ are normal to $M$; Let $\{\omega_A\}_{A=1}^{n+p}$ be the dual coframe of $\{e_A\}_{A=1}^{n+p}$ . Then the structure equations of $(N,g_N)$ are (see \cite{Chern1968}):
\begin{equation}
\left\{\begin{aligned}\label{eq21}
& d\omega_{A}=\sum_{B}\omega_{AB}\wedge\omega_{B}, \\
&\omega_{AB}+\omega_{BA}=0,
\end{aligned}\right.
\end{equation}
where $\{\omega_{AB}\}$ are the connection forms of $(N,g_N)$.

Denote $x^{\ast}\omega_{A}=\theta_{A}, x^{\ast}\omega_{AB}=\theta_{AB}$, then restricted to $(M,g_M)$, we have (see \cite{Chern1968})
\begin{equation}\label{eq23}
\theta_{\alpha}=0, \quad\theta_{i\alpha}=\sum_j h_{ij}^{\alpha}\theta_{j}.
\end{equation}
and
\begin{equation}
\left\{\begin{aligned}\label{eq22}
&d\theta_{i}=\sum_{j}\theta_{ij}\wedge\theta_{j}, \quad\theta_{ij}+\theta_{ji}=0,\\
& d\theta_{ij}-\sum_k\theta_{ik}\wedge\theta_{kj}=-\frac{1}{2}\sum_{k,l}R_{ijkl}\,\theta_k\wedge\theta_l,
\end{aligned}
\right.
\end{equation}
where $R_{ijkl}$ are components of the curvature tensor of $(M, g_M)$ and $h_{ij}^{\alpha}$ are components of the second fundamental form of $(M, g_M)$ in $(N, g_N)$.

Now we assume that $N$ is equipped with a new metric $\tg_N=e^{2\rho}g_N$ which is conformal to $g_N$, where $\rho\in C^{\infty}(N)$. Then $\{\et_A=e^{-\rho}e_A\}$ is an orthonormal frame of $(N,\tg_N)$, and $\{\tom_A=e^{\rho}\omega_A\}$ is the dual coframe of  $\{\et_A\}$. We denote the Levi-Civita connections on $(N,\tg_N)$ by  $\tilde{\nabla}$. The structure equations of $(N,\tg_N)$ are given by

\begin{equation}
\left\{\begin{aligned}\label{eq25}
 &d\tom_{A}=\sum\limits_{B}\tom_{AB}\wedge\tom_{B},\\
 &\tom_{AB}+\tom_{BA}=0,
\end{aligned}\right.
\end{equation}
where $\{\tom_{AB}\}$ are the connection forms of $(N,\tg_N)$.

Given a smooth function $F$ on $(N,g_N)$, its gradient is given by (see \cite{Chern1968})
\begin{equation*}
dF=\sum_{A=1}^{n+p}F_A\omega_A=\sum_{A=1}^{n+p}\bar{\nabla}_{A}F\;\omega_A.
\end{equation*}
The second covariant derivative are given by
\begin{equation*}
	\sum_{B}F_{AB}=dF_A+\sum_BF_B\omega_{BA}.
\end{equation*}
On the other hand, under the metric $\tg_N$,
\begin{equation*}
dF=\sum_{A=1}^{n+p}\tilde{F}_A\tom_A=\sum_{A=1}^{n+p}\tn_{A}F\;\tom_A,
\end{equation*}
so we have the following relation
\begin{equation}\label{eq27}
\tn_{A}F=e^{-\rho}\bar{\nabla}_{A}F,~~\forall F\in C^{\infty}(N).
\end{equation}

From (\ref{eq21}) and (\ref{eq25}), we derive that
\begin{equation}
\begin{aligned}\label{eq26}
\tom_{AB}=\omega_{AB}+\rho_A\omega_B-\rho_B\omega_A,
\end{aligned}
\end{equation}
where $\rho_A$ means the covariant derivative of $\rho$ with respect to $e_A$.

We denote $\tg_M=x^{\ast}\tg_N, x^{\ast}\tom_{A}=\tt_{A}, x^{\ast}\tom_{AB}=\tt_{AB}$, then restricted to $(M,\tg_M)$, we have
\begin{equation}\label{eq29}\tt_{\alpha}=0, \quad\tt_{i\alpha}=\sum_j \th_{ij}^{\alpha}\tt_{j},
\end{equation}
and
\begin{equation}
\left\{
\begin{aligned}\label{eq28}
&\displaystyle d\tt_{i}=\sum_{j}\tt_{ij}\wedge\tt_{j}, \quad\tt_{ij}+\tt_{ji}=0, \\
\displaystyle &d\tt_{ij}-\sum_k\tt_{ik}\wedge\tt_{kj}=-\frac{1}{2}\sum_{k,l}\tR_{ijkl}\,\tt_k\wedge\tt_l,
\end{aligned}
\right.\end{equation}
where $\tR_{ijkl}$ are components of the curvature tensor of $(M, \tg_M)$ and $\th_{ij}^{\alpha}$ are components of the second fundamental form of $(M, \tg_M)$ in $(N, \tg_N)$.

By pulling back (\ref{eq26}) to $M$ by $x$ and using (\ref{eq23}) and (\ref{eq29}), we obtain the following relation.
\begin{equation}\label{eq208}
\th_{ij}^{\alpha}=e^{-\rho}(h_{ij}^{\alpha}-\rho_{\alpha}\delta_{ij}), \quad\tH^{\alpha}=e^{-\rho}(H^{\alpha}-\rho_{\alpha}).
\end{equation}
Combining (\ref{eq22}), (\ref{eq26}), (\ref{eq27}) and (\ref{eq28}), we can obtain the following relation:
\begin{equation}\label{eq209}
\begin{aligned}
e^{2\rho}\tR_{ijkl}=&R_{ijkl}-(\rho_{ik}\delta_{jl}+\rho_{jl}\delta_{ik}-\rho_{il}\delta_{jk}-\rho_{jk}\delta_{il})\\
&+(\rho_i\rho_k\delta_{jl}+\rho_j\rho_l\delta_{ik}-\rho_j\rho_k\delta_{il}-\rho_i\rho_l\delta_{jk})\\
&-(\sum_m\rho_{m}^2)(\delta_{ik}\delta_{jl}-\delta_{il}\delta_{jk}),
\end{aligned}
\end{equation}
\begin{equation}\label{eq210}
e^{2\rho}\tR_{ij}=R_{ij}+(n-2)\big(\rho_i\rho_j-\rho_{ij}-|\nabla \rho|^2\delta_{ij}\big)-(\Delta\rho)\delta_{ij},
\end{equation}
where $R_{ij}$ (or $\tR_{ij}$ resp.) are components of Ricci curvature with respect to $g_M$ (or $\tg_M$ resp.).

\subsection{Basic formulas for submanifolds in  space forms}
From now on, we set $N=\R$, and denote $x$ the immersion from $M$ to $\R$. Using the previous notations, when restricted to $M$, we have the following structure equations of $M$ (see \cites{CL07,Chern1968}):
\begin{align*}
dx=&\sum_{i}\theta_ie_i,\quad de_i=\sum_{j}\theta_{ij}e_j+\sum_{j}h_{ij}^{\alpha}\theta_{j}e_{\alpha}-c\theta_ix,\quad
de_{\alpha}=-\sum_{i,j}h_{ij}^{\alpha}\theta_{j}e_i+\sum_{\beta}\theta_{\alpha\beta}e_{\beta},
\end{align*}
from which we derive that  (cf. \cite{Tak66,CL07})
\begin{equation}\label{eq_d2x}
x_i=e_i,\quad x_{ij}=\sum_{\alpha}h_{ij}^{\alpha}e_{\alpha}-c\delta_{ij}x.
\end{equation}

The Gauss equations are given by (see \cites{CL07,Chern1968})
\begin{align}\label{gauss}
R_{ijkl}&=(\delta_{ik}\delta_{jl}-\delta_{il}\delta_{jk})c+\sum_\alpha( h_{ik}^{\alpha}h_{jl}^{\alpha}-h_{il}^{\alpha}h_{jk}^{\alpha}),\\
\label{gauss2}
R_{ik}&=(n-1)c\delta_{ik}+nH^{\alpha}h_{ik}^{\alpha}-\sum_{\alpha,j}h_{ij}^{\alpha}h_{jk}^{\alpha},
\\\label{gauss3}
R&=n(n-1)c+n^2H^2-S,
\end{align}
where $R$ is the scalar curvature of $M$, $S=\sum\limits_{\alpha,i,j}(h_{ij}^{\alpha})^2$ is the
norm square of the second fundamental form, $\mathbf{H}=\sum\limits_\alpha H^{\alpha}e_\alpha=\frac{1}{n}
\sum\limits_\alpha(\sum\limits_i h_{ii}^{\alpha})e_\alpha$ is the mean curvature  vector of $M$.

The Codazzi
equations are given by (see \cites{CL07,Chern1968})
\begin{equation}\label{codazzi}
h_{ijk}^{\alpha}=h_{ikj}^{\alpha},
\end{equation}
where the covariant derivative of $h_{ij}^{\alpha}$ is defined by
\begin{equation*}
\sum_kh_{ijk}^{\alpha}\theta_k=dh_{ij}^{\alpha}+\sum_kh_{kj}^{\alpha}\theta_{ki}+\sum_kh_{ik}^{\alpha}\theta_{kj}+\sum_\beta h_{ij}^{\beta}\theta_{\beta\alpha}.
\end{equation*}

The  gradient and
Hessian of $f\in C^\infty(M)$ are given by
\begin{equation*}
df=\sum_{i=1}^{n}f_i\theta_{i},\quad
\sum_{j=1}^{n}f_{ij}\theta_j=df_i+\sum_{j=1}^{n}f_j\theta_{ji}.
\end{equation*}

\subsection{Newton transformations $T_r$, the $L_r$ operator and  the $r$-mean curvature.}
By convention, we set $H_0=S_0=1$ and $T_0=I$.
We denote $A_{ij}=A(e_i,e_j)=\sum_\alpha h_{ij}^\alpha e_\alpha$ and define $(0,2)$-tensor $T_r$ for $r\in {1,\cdots, n}$ as follows (cf. \cites{Rei77,Gro02,CL07}).
\begin{align*}
	T_r=&\frac{1}{r!}\sum_{\substack{i_1\ldots i_ri\\j_1\ldots j_rj}}\delta^{i_1\ldots i_ri}_{j_1\ldots j_rj}\langle A_{i_1j_1},A_{i_2j_2}\rangle\cdots\langle A_{i_{r-1}j_{r-1}},A_{i_rj_r}\rangle \theta_i\otimes\theta_j\\
	=&\sum_{i,j}T_{r\,ij}\theta_i\otimes\theta_j, \quad \mbox{if $r$ is even.}\\
T_r=&\frac{1}{r!}\sum_{\substack{i_1\ldots i_ri\\j_1\ldots j_rj}}\delta^{i_1\ldots i_ri}_{j_1\ldots j_rj}\langle A_{i_1j_1},A_{i_2j_2}\rangle\cdots\langle A_{i_{r-2}j_{r-2}},A_{i_{r-1}j_{r-1}}\rangle A_{i_rj_r} \theta_i\otimes\theta_j\\
=&\sum_{i,j,\alpha}T_{r\,ij}^\alpha e_\alpha\theta_i\otimes\theta_j,\quad \mbox{if $r$ is odd.}
\end{align*}
Here $\delta^{i_1\ldots i_ri}_{j_1\ldots j_rj}$ is the generalized Kronecker delta.

When $r$ is even, $T_r$ is a symmetric and divergence-free $(0,2)$-tensor, and
one can define a differential operator $L_r$ associated to $T_r$ by (see \cites{CY77,CL07})
\begin{equation}\label{Lr}
L_r f=-\sum_{i,j}T_{r\,ij}f_{ij}=-\mbox{div}(T_r\nabla f),~\forall~f\in C^{\infty}(M).
\end{equation}

When $r$ is odd,
 $T_r$ is a symmetric and divergence-free normal-vectored value $(0,2)$-tensor and one can define a differential operator $L_r$ by
 \begin{equation*}
L_r f=-\sum_{i,j}T^\alpha_{r\,ij}f_{ij}e_{\alpha},~\forall~f\in C^{\infty}(M),
 \end{equation*}
 which maps smooth functions to the sections of the normal bundle of $M$  (cf. \cite{CL07}).

When $r$ is even, the
$r$-th mean curvature function  $S_r$ and $(r+1)$-th
mean curvature vector field $\mathbf{S}_{r+1}$ are defined as follows:
\begin{equation}\label{eq2.23}
S_r=\frac{1}{r}\sum_{\alpha, i, j}T_{r-1\, ij}^{\alpha}h_{ij}^{\alpha}=\binom{n}{r}H_r,\quad
\mathbf{S}_{r+1}=\frac{1}{r+1}\sum_{\alpha, i, j}T_{r\, ij}h_{ij}^{\alpha}e_{\alpha}=\binom{n}{r+1}\mathbf{H}_{r+1}.
\end{equation}
A submanifold $M$ is called  \emph{$r$-minimal} if its $(r + 1)$-th mean curvature vector $\mathbf{S}_{r+1}$ vanishes on $M$ (see Definition 1.1 in \cite{CL07}).

When the codimension $p=1$, we denote $h_{ij}=h_{ij}^{n+1}$ and replace $T_r$ by the tensor $T_r = \langle T_r(\cdot, \cdot), e_{n+1}\rangle$ for odd $r$. Then
\begin{equation*}
	S_r=\sum_{i_1<\cdots<i_r}k_{i_1}\cdots k_{i_r}
\end{equation*}
is the $r$-th elementary symmetric polynomial of principal curvatures $\{k_1,\cdots, k_n\}$.

We recall some basic facts for later use.
\begin{lem}[see Lemma 3.3, Lemma 3.4 of \cite{CL07} and Lemma 2.1 of \cite{BC97}]\label{lem2.1}
	For any integer $r\in \{0,\ldots, n-1\}$, we have
	\begin{align}
	&\tr(T_r)=(n-r)S_r,\quad \mbox{when $p=1$ or $p>1$ and $r$ is even};\nonumber\\
	&\tr(T_{r\,ij}^\alpha)=\frac{n-r}{r}\sum_{i,j}T_{r-1\,ij}h_{ij}^\alpha, \quad\mbox{for each $\alpha$ when $r$ is odd};\nonumber\\
	&T_{r\,ij}=S_r\delta_{ij}-\sum_{k,\alpha}T_{r-1\,kj}^\alpha h_{ki}^\alpha,\quad \mbox{when $p=1,~r\geq 1$ or $p>1,r\geq 1$ and $r$ is even}.\label{eq2.26}
	\end{align}
\end{lem}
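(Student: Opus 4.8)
The plan is to derive all three identities directly from the definition of $T_r$ in terms of the generalized Kronecker delta, using only two elementary facts: the single-index contraction rule $\sum_i \delta^{i_1\ldots i_r i}_{j_1\ldots j_r i}=(n-r)\,\delta^{i_1\ldots i_r}_{j_1\ldots j_r}$ in dimension $n$, and the Laplace (cofactor) expansion of $\delta^{i_1\ldots i_r i}_{j_1\ldots j_r j}$ along one free index. As a preliminary normalization I would record the intrinsic expression $S_r=\frac{1}{r!}\sum \delta^{i_1\ldots i_r}_{j_1\ldots j_r}\langle A_{i_1j_1},A_{i_2j_2}\rangle\cdots\langle A_{i_{r-1}j_{r-1}},A_{i_rj_r}\rangle$ for even $r$ and check that it agrees with the definition $S_r=\frac1r\sum_{\alpha,i,j}T^\alpha_{r-1\,ij}h^\alpha_{ij}$ in \eqref{eq2.23}: substituting the odd-$r$ formula for $T^\alpha_{r-1\,ij}$, using $\sum_\alpha h^\alpha_{i_{r-1}j_{r-1}}h^\alpha_{ij}=\langle A_{i_{r-1}j_{r-1}},A_{ij}\rangle$, and relabelling $(i,j)\to(i_r,j_r)$ turns the right-hand side into $r$ times the intrinsic expression, so the two agree.

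For the first identity I would take $\tr(T_r)=\sum_i T_{r\,ii}$ and contract the last index pair of its defining delta: the contraction rule replaces $\sum_i\delta^{i_1\ldots i_r i}_{j_1\ldots j_r i}$ by $(n-r)\delta^{i_1\ldots i_r}_{j_1\ldots j_r}$, and what remains is exactly $(n-r)$ times the intrinsic $S_r$ recorded above, proving $\tr(T_r)=(n-r)S_r$ for $p=1$ or for even $r$. The second identity is the same computation for odd $r$: tracing $T^\alpha_{r\,ij}$ contracts the last pair with the factor $(n-r)$, and the surviving order-$r$ delta together with the retained factor $h^\alpha_{i_rj_r}$ is, after relabelling, precisely $\frac{1}{(r-1)!}\sum\delta^{i_1\ldots i_r}_{j_1\ldots j_r}\langle\cdots\rangle h^\alpha_{i_rj_r}=r\sum_{i,j}T_{r-1\,ij}h^\alpha_{ij}$; combining this with the overall prefactor $\frac{n-r}{r!}$ and $r!=r\,(r-1)!$ yields $\tr(T^\alpha_{r\,ij})=\frac{n-r}{r}\sum_{i,j}T_{r-1\,ij}h^\alpha_{ij}$.

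The recursion \eqref{eq2.26} is the substantive part. Here I would expand $\delta^{i_1\ldots i_r i}_{j_1\ldots j_r j}$ by cofactors along the column carrying the free lower index $j$, writing it as $\delta^i_j\,\delta^{i_1\ldots i_r}_{j_1\ldots j_r}-\sum_{a=1}^r\delta^{i_a}_j\,\delta^{i_1\ldots i_{a-1}\,i\,i_{a+1}\ldots i_r}_{j_1\ldots j_r}$. Inserting this into $T_{r\,ij}$, the first term produces $\delta_{ij}$ times the intrinsic $S_r$, giving the $S_r\delta_{ij}$ summand. In the $a$-th term of the remaining sum the constraint $\delta^{i_a}_j$ sets $i_a=j$, which opens the inner-product pair containing the slot $i_a$: the partner factor becomes a single $h^\alpha$ contracted against $A$, which is exactly the shape of the lone factor in the odd tensor $T^\alpha_{r-1}$. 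The key mechanism is that, by the total antisymmetry of the delta in its upper indices combined with the symmetry of the product of inner products under permuting the pairing, all $r$ terms give equal contributions after relabelling, so the prefactor collapses as $\frac{1}{r!}\cdot r=\frac{1}{(r-1)!}$, matching the normalization of $T^\alpha_{r-1}$; recognizing the opened pair as $\sum_\alpha h^\alpha h^\alpha$ then identifies the sum with $\sum_{k,\alpha}T^\alpha_{r-1\,kj}h^\alpha_{ki}$ (up to the harmless transpose permitted by the symmetry of $T_r$, $T_{r-1}$ and $A$), yielding \eqref{eq2.26}.

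The main obstacle is precisely this last collapse: one must verify that pulling out any of the $r$ columns yields the same contribution, which requires tracking how the opened inner-product pair and the antisymmetrized delta reorganize under relabelling, and confirming that the even/odd position of the pulled slot within its pair does not spoil the symmetry (it does not, since each $\langle A_{\cdot},A_{\cdot}\rangle$ is symmetric in its two arguments). For $p=1$ all three identities reduce to the classical Newton-transformation formulas $\tr T_r=(n-r)S_r$ and $T_r=S_rI-AT_{r-1}$ for the shape operator, so the entire content lies in organizing the higher-codimension, even-$r$ bookkeeping; the signs in the cofactor expansion combine cleanly because every cross term carries an antisymmetric delta while its scalar coefficient is symmetric.
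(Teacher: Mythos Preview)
Your approach is correct and is the standard one: the paper does not prove Lemma~\ref{lem2.1} at all but merely cites it from \cite{CL07} and \cite{BC97}, and your direct derivation from the Kronecker-delta identities \eqref{eq-kron-prop1}--\eqref{eq-kron-prop3} (which the paper itself records in its appendix) is exactly how those references proceed. The key mechanism you identify for \eqref{eq2.26}---that all $r$ cofactor terms in the column expansion coincide after relabelling, thanks to the simultaneous antisymmetry of the delta and the pair/argument symmetry of the inner-product block---is precisely the point, and you have stated it correctly.

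One small bookkeeping slip to fix: in your treatment of the second identity you write
\[
\frac{1}{(r-1)!}\sum\delta^{i_1\ldots i_r}_{j_1\ldots j_r}\langle\cdots\rangle\, h^\alpha_{i_rj_r}\;=\;r\sum_{i,j}T_{r-1\,ij}h^\alpha_{ij},
\]
but in fact the right-hand side should carry no factor of $r$: by the definition of $T_{r-1}$ (with $r-1$ even) one has directly $\sum_{i,j}T_{r-1\,ij}h^\alpha_{ij}=\frac{1}{(r-1)!}\sum\delta^{i_1\ldots i_r}_{j_1\ldots j_r}\langle\cdots\rangle h^\alpha_{i_rj_r}$ after relabelling $(i,j)\to(i_r,j_r)$. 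Combining this corrected identity with the prefactor $\frac{n-r}{r!}=\frac{n-r}{r}\cdot\frac{1}{(r-1)!}$ then gives the claimed $\tr(T^\alpha_r)=\frac{n-r}{r}\sum_{i,j}T_{r-1\,ij}h^\alpha_{ij}$, so your conclusion is right even though the intermediate display is off by that factor. Everything else is sound.
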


\section{Proof of Theorem \ref{thm1.1}}
In this section, we prove Theorem \ref{thm1.1}. First, to prove the inequality \eqref{eq_thm1}, the first step is using a technique of conformal transformation on a sphere which was introduced by Li and Yau (see \cite{LY82}) and was used by other authors (see \cites{MR86, Per04, Urbano90, ESI00, Gro00, LW12,CW13}). This technique will provide us  good test functions to estimate the second eigenvalue of $L_T$. After choosing the suitable test functions, the key step is to find the relations between some geometric quantities associated with $T$, which is presented in Proposition \ref{prop3.2}. Second, to prove the sufficient and necessary conditions of the equality in \eqref{eq_thm1}, we need to write down explicitly the conformal transformation and analyze the equality carefully. We will discuss the three cases ($c=1,0,-1$) separately. Finally, we generalize Theorem \ref{thm1.1} to Schr{\"o}dinger-type operators.

\subsection{The inequality in \eqref{eq_thm1}}

 By using the technique in Li-Yau \cite{LY82}, we have the following lemma.
\begin{lem}[see \cites{LY82,ESI00}]\label{lem3.1}
Let $M$ be an $n$-dimensional closed orientable submanifold in an $(n+p)$-dimensional space form $\R$. Then there exists a regular conformal map $\Gamma: \R\to \S\subset\mathbb{R}^{n+p+1}$ such that  the immersion $\Phi=\Gamma\circ x=(\Phi^1, \cdots, \Phi^{n+p+1})$ satisfies that
\begin{equation*}
\int_M \Phi^A\,dv_M=0,~A=1,\ldots,n+p+1.
\end{equation*}
\end{lem}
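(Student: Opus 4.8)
The statement to prove is the classical "center of mass" or "balancing" lemma: given a closed submanifold $x\colon M\to\R$ of a space form, one can precompose with a conformal diffeomorphism of the sphere so that the new immersion into $\S\subset\mathbb{R}^{n+p+1}$ has all coordinate functions of mean zero. Here is how I would organize the proof.

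\textbf{Setup via conformal models.} The plan is first to fix a conformal embedding of the ambient space form $\R$ into the round sphere $\S$. For $c=1$ this is the identity; for $c=0$ it is the inverse stereographic projection $\mathbb{R}^{n+p}\to\S\setminus\{\text{pt}\}$; for $c=-1$ it realizes $\H$ as a geodesic ball (an open hemisphere-type cap) conformally. Composing with $x$ gives an immersion $\iota\colon M\to\S\subset\mathbb{R}^{n+p+1}$ whose image lies in $\S$ (missing at most one point, which causes no trouble since $M$ is compact and we can arrange the image to avoid that point, or argue by a limiting/density argument). The group $G$ of conformal diffeomorphisms of $\S$ is (the identity component of) $O(n+p+1,1)$; concretely, for each $v$ in the open unit ball $B^{n+p+1}\subset\mathbb{R}^{n+p+1}$ there is a conformal map $\Gamma_v\colon\S\to\S$, with $\Gamma_0=\mathrm{id}$, and $\Gamma_v$ "pushes mass toward the point $v/|v|$" as $|v|\to1$. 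Then I would consider the map
\begin{equation*}
\mathcal{F}\colon B^{n+p+1}\to\mathbb{R}^{n+p+1},\qquad \mathcal{F}(v)=\frac{1}{V(M)}\int_M (\Gamma_v\circ\iota)\,dv_M,
\end{equation*}
and the goal is exactly to find $v$ with $\mathcal{F}(v)=0$; the desired $\Phi$ is then $\Gamma_v\circ\iota$ and the desired $\Gamma$ is $\Gamma_v$ composed with the fixed conformal model map.

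\textbf{The topological argument.} The map $\mathcal{F}$ is continuous on the open ball and takes values in the closed unit ball $\overline{B}^{n+p+1}$, since each $\Gamma_v\circ\iota$ maps into $\S$ and hence its average lies in the convex hull, the closed ball. The key analytic fact is the boundary behavior: as $v\to v_0$ with $|v_0|=1$, the measures $(\Gamma_v\circ\iota)_*(dv_M)$ concentrate at the single point $v_0$ (because $\Gamma_v$ collapses all but a shrinking neighborhood of the antipodal point $-v_0$ onto a shrinking neighborhood of $v_0$, and $\iota(M)$, being compact, spends negligible mass near that antipode unless $\iota(M)$ happens to pass through $-v_0$; this last degeneracy can be ruled out, or handled, by first rotating so that $-v_0\notin\iota(M)$, using that $\iota(M)$ has measure zero in $\S$). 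Hence $\mathcal{F}(v)\to v_0$ as $v\to v_0\in\partial B$. Therefore $\mathcal{F}$ extends continuously to $\overline{B}^{n+p+1}$ with $\mathcal{F}|_{\partial B}=\mathrm{id}$. A standard degree-theory / no-retraction argument (if $\mathcal{F}$ omitted $0$, then $v\mapsto \mathcal{F}(v)/|\mathcal{F}(v)|$ would be a retraction of $\overline{B}$ onto $\partial B$, contradicting Brouwer) forces $0\in\mathcal{F}(\overline{B})$, and since $\mathcal{F}\ne0$ on the boundary, the zero occurs at an interior point $v\in B$. This is the $v$ we want.

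\textbf{Conclusion and the main obstacle.} With such a $v$, set $\Gamma=\Gamma_v\circ(\text{conformal model map }\R\to\S)$ and $\Phi=\Gamma\circ x$; then $\Phi$ is an immersion into $\S\subset\mathbb{R}^{n+p+1}$ with $\int_M\Phi^A\,dv_M=0$ for every $A$, which is the claim. Regularity of $\Gamma$ is immediate since it is a composition of smooth conformal maps. I expect the main obstacle to be the careful verification of the boundary concentration statement $\mathcal{F}(v)\to v_0$, i.e., making precise that the pushforward measures concentrate at $v_0$ uniformly enough to control the average — this requires a quantitative description of $\Gamma_v$ near $|v|=1$ and the (easy but necessary) observation that one may assume $\iota(M)$ avoids the relevant antipodal points, using $\dim M<\dim\S$. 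Everything else — continuity of $\mathcal{F}$, the convex-hull containment, and the Brouwer-type conclusion — is routine. Since this lemma is quoted from \cite{LY82} and \cite{ESI00}, I would in practice cite those references for the details and only sketch the argument above.
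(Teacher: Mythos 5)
The paper offers no proof of Lemma \ref{lem3.1} — it simply cites \cite{LY82} and \cite{ESI00} — and your argument is exactly the standard Hersch/Li--Yau center-of-mass argument from those references: conformally embed $\R$ into $\S$, average $\Gamma_v\circ\iota$ over the ball of conformal parameters, and apply a Brouwer no-retraction argument to produce an interior zero. The only point to tighten is that the boundary limit $\mathcal{F}(v)\to v_0$ must be made uniform in $v_0$ so the degree argument can be run on a sphere of radius $1-\epsilon$; this follows from the non-atomicity of the pushforward of $dv_M$ (which you already invoke via the measure-zero observation), and renders your parenthetical about rotating to avoid antipodal points unnecessary.
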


\begin{rem}
We note that the immersion $\Phi$ in Lemma \ref{lem3.1} can be written down explicitly, see Section 3.3 for the details.
\end{rem}

Now we set $N=\R$ in Section 2.1, $g_N=h_c, \tg_N=\Gamma^{\ast}h_1=e^{2\rho}g_N$, then $g_M=x^{\ast}h_c$, $\tg_M=(\Gamma\circ x)^{\ast}h_1.$ Here $h_c$ is the standard metric on $\R$.
In order to obtain the inequality \eqref{eq_thm1}, we first prove the following key proposition.
\begin{prop}\label{prop3.2}
Let $M$ be an $n(> 2)$-dimensional  closed orientable submanifold in an $(n+p)$-dimensional space form $\R$. Let $T$ be a symmetric, divergence-free $(1,1)$-tensor on $M$. Then we have the following relation.
	\begin{equation*}
	e^{2\rho}\tr T=c\tr T+2L_T\rho-\tr T|(\bar{\nabla}\rho)^\bot|^2+2\langle H_T, (\bar{\nabla}\rho)^\bot\rangle-T'(\nabla\rho,\nabla\rho),
\end{equation*}
	where $T'=(\tr T)I-2T$, $\nabla\rho=\sum_i\rho_ie_i, (\bar{\nabla}\rho)^\bot=\sum_\alpha\rho_\alpha e_\alpha=\bar{\nabla}\rho-\nabla\rho.$
\end{prop}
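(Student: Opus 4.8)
The plan is to compute the conformal change of the quantity $\operatorname{tr}T$ directly from the conformal relations established in Section 2.1, treating $\operatorname{tr}T$ not as an intrinsic scalar but as the full contraction $\sum_{i,j}T_{ij}\,\tilde g_M^{ij}$ after passing to the metric $\tilde g_M=e^{2\rho}g_M$. Since $T=(T_{ij})$ is a fixed $(0,2)$-tensor on $M$ (it does not change under the conformal change of the ambient metric, only the way we raise indices changes), the left-hand side $e^{2\rho}\operatorname{tr}_{\tilde g}T$ equals $\operatorname{tr}_{g}T$ up to corrections coming from the fact that we want to re-express everything using the derivatives of $\rho$ and the second fundamental form $h^\alpha_{ij}$ of the original immersion $x$. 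Actually the cleanest route is the following: the claimed identity is a pointwise identity on $M$, so I would work at a fixed point $q\in M$ in an orthonormal frame $\{e_i\}$ for $g_M$ diagonalizing $T$, write $T e_i = t_i e_i$, and expand each term.

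The key input is formula \eqref{eq208}, $\th_{ij}^\alpha = e^{-\rho}(h_{ij}^\alpha - \rho_\alpha\delta_{ij})$, together with the second-covariant-derivative transformation rule for functions restricted to a submanifold. Concretely, I would use the standard fact (derivable from \eqref{eq26} pulled back to $M$, exactly as \eqref{eq209}–\eqref{eq210} are derived) that for $f\in C^\infty(M)$ the Hessian transforms as
\begin{equation*}
e^{2\rho}\tilde f_{ij} = f_{ij} - (\rho_i f_j + \rho_j f_i) + \langle\nabla\rho,\nabla f\rangle\delta_{ij} - \sum_\alpha \rho_\alpha h^\alpha_{ij}\,(\text{terms from the normal part}),
\end{equation*}
but since here we do not apply $L_T$ to an external function but rather expand $L_T\rho = -\sum_{i,j}T_{ij}\rho_{ij}$ where $\rho_{ij}$ is the Hessian of $\rho|_M$ in $g_M$, the cleaner statement is: the Hessian of $\rho$ as a function on $N$ restricted to $M$ decomposes via \eqref{eq_d2x}-type Gauss formulas as $\bar\nabla^2\rho(e_i,e_j) = \rho_{ij} + \sum_\alpha \rho_\alpha h^\alpha_{ij}$ where $\rho_{ij}$ is the intrinsic Hessian on $M$ and $\rho_\alpha = \bar\nabla_{e_\alpha}\rho$. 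I would substitute this and the relation between the intrinsic Laplacian-type contraction and $\operatorname{tr}T$ directly. The term $-\operatorname{tr}T\,|(\bar\nabla\rho)^\bot|^2 + 2\langle H_T,(\bar\nabla\rho)^\bot\rangle - T'(\nabla\rho,\nabla\rho)$ on the right should emerge as: $\operatorname{tr}T$ times the $|\rho_\alpha|^2$-type terms in the conformal factor, the cross term $\sum_{i,j,\alpha}T_{ij}h^\alpha_{ij}\rho_\alpha = \langle H_T,(\bar\nabla\rho)^\bot\rangle$ (by definition \eqref{DEFHT}) with the right coefficient, and the purely tangential quadratic $\sum_{i,j}T_{ij}\rho_i\rho_j$ together with the $(\operatorname{tr}T)|\nabla\rho|^2$ term assembling into $T'(\nabla\rho,\nabla\rho) = (\operatorname{tr}T)|\nabla\rho|^2 - 2\sum T_{ij}\rho_i\rho_j$.

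So concretely the steps are: (1) write $e^{2\rho}\operatorname{tr}_{\tilde g}T = e^{2\rho}\sum_{i,j}T_{ij}\tilde g_M^{ij}$ and relate $\tilde g_M^{ij}$ to $g_M^{ij}$ — but more usefully, recognize that the identity is really about applying the trace-with-$T$ operation to the conformal Gauss-type equation; (2) derive, à la \eqref{eq209}/\eqref{eq210}, the transformation of $\operatorname{tr}_T(\text{something})$ by contracting \eqref{eq26}-pulled-back against $T$ rather than against $\delta$; (3) identify the divergence-free hypothesis on $T$ as what lets us write the first-order terms as the single term $2L_T\rho$ (this is where $\operatorname{div}T=0$ is used — the term $2\sum_{i,j}(\nabla_i T_{ij})\rho_j$ vanishes, leaving a pure $-2\sum T_{ij}\rho_{ij} = 2L_T\rho$); (4) collect the zeroth-order term $c\operatorname{tr}T$ from the constant curvature of $\R$ (this is the analogue of how $c$ enters \eqref{eq_d2x}); (5) collect all quadratic-in-$\nabla\rho$ and $(\bar\nabla\rho)^\bot$ terms and match them to $-\operatorname{tr}T|(\bar\nabla\rho)^\bot|^2 + 2\langle H_T,(\bar\nabla\rho)^\bot\rangle - T'(\nabla\rho,\nabla\rho)$ using \eqref{DEFHT} and the definition $T'=(\operatorname{tr}T)I-2T$.

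The main obstacle I anticipate is bookkeeping in step (5): keeping the tangential derivatives $\rho_i$, the normal derivatives $\rho_\alpha$, the full ambient gradient $\bar\nabla\rho$, and the mixed second derivatives $\rho_{i\alpha}$ straight, and making sure the coefficient $2$ in front of $\langle H_T,(\bar\nabla\rho)^\bot\rangle$ and in $T' = (\operatorname{tr}T)I - 2T$ come out consistently — these factors of $2$ are exactly where the asymmetry between the tangential and normal contributions of the conformal factor lives, and where the hypothesis "$T'$ semi-positive definite" is being set up to be useful later. A secondary subtlety is that $\rho$ is a function on $N$, not on $M$, so one must carefully distinguish $\bar\nabla^2\rho$ from the Hessian of $\rho|_M$, using the Gauss formula $x_{ij}=\sum_\alpha h^\alpha_{ij}e_\alpha - c\delta_{ij}x$ from \eqref{eq_d2x} to relate them; getting the $c$-term right in the final identity depends on this. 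Everything else is a routine (if lengthy) contraction of the conformal structure equations \eqref{eq25}–\eqref{eq29}.
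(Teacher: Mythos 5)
Your overall strategy (conformal transformation formulas from Section~2.1, Gauss equations, contraction against $T$, then identification of $H_T$ and $T'$) is the same one the paper uses, but as written your plan has a genuine gap at its center. The paper's proof contracts the identity obtained from \eqref{eq208}, \eqref{eq210} and the two Gauss equations \eqref{G1}--\eqref{G2} \emph{twice}: once with $\delta_{ij}$, which produces the scalar equation \eqref{deltarho} expressing $\Delta\rho$ in terms of $|\nabla\rho|^2$, $e^{2\rho}-c+\sum_\alpha\rho_\alpha^2$ and $\sum_\alpha H^\alpha\rho_\alpha$, and once with $T_{ij}$. The $T$-contraction unavoidably contains the term $-(\tr T)\,\Delta\rho$ (the Hessian part of the conformal Ricci formula \eqref{eq210} is $-(n-2)\rho_{ij}-(\Delta\rho)\delta_{ij}$, and no choice of contraction tensor kills the $\delta_{ij}$ piece), so the $\delta$-contraction must be substituted back in to eliminate $\Delta\rho$; after that substitution every remaining term carries a factor $(n-2)$, and dividing by $n-2$ is exactly where the hypothesis $n>2$ enters. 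Your step (2) explicitly proposes contracting ``against $T$ rather than against $\delta$,'' which discards the equation you need for the elimination, and nowhere in your outline is the hypothesis $n>2$ accounted for. Without these two steps the computation terminates with an extra $(\tr T)\Delta\rho$ term that is not present in the stated identity.

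Two secondary points. First, you are vague about where $e^{2\rho}\tr T$ on the left-hand side comes from: it is not a re-reading of $\tr T$ in the metric $\tilde g_M$ (your opening interpretation $e^{2\rho}\tr_{\tilde g}T=\tr_gT$ is not what the statement asserts), but rather the contribution $e^{2\rho}\cdot(n-1)\delta_{ij}$ from the Gauss equation \eqref{G2} of the image $\Phi(M)$ sitting in the \emph{unit} sphere; the $\tilde h$-terms lose their $e^{2\rho}$ because $\th_{ij}^\alpha=e^{-\rho}(h_{ij}^\alpha-\rho_\alpha\delta_{ij})$. Second, your ambient-Hessian decomposition has the wrong sign: $\bar\nabla^2\rho(e_i,e_j)=\rho_{ij}-\sum_\alpha\rho_\alpha h_{ij}^\alpha$, not $+$; in any case the paper avoids this decomposition entirely, since in \eqref{eq210} the quantities $\rho_{ij}$, $\nabla\rho$, $\Delta\rho$ are all intrinsic to $(M,g_M)$ and the normal derivatives $\rho_\alpha$ enter only through \eqref{eq208}. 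Your identification of the quadratic terms in step (5), including $\sum_{i,j,\alpha}T_{ij}h_{ij}^\alpha\rho_\alpha=\langle H_T,(\bar\nabla\rho)^\bot\rangle$ and $T'(\nabla\rho,\nabla\rho)=(\tr T)|\nabla\rho|^2-2\sum_{i,j}T_{ij}\rho_i\rho_j$, is correct and matches the paper.
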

\begin{proof}
From \eqref{gauss2}, we have the Gauss equations for the immersion $x$ and the immersion $\Phi=\Gamma\circ x$ respectively:
	\begin{align}
	R_{ij}=&(n-1)c\delta_{ij}+\sum_{\alpha}nH^{\alpha}h_{ij}^{\alpha}-\sum_{k,\alpha}h_{ik}^{\alpha}h_{kj}^{\alpha},\label{G1}\\
	\tR_{ij}=&(n-1)\delta_{ij}+\sum_{\alpha}n\tH^{\alpha}\th_{ij}^{\alpha}-\sum_{k,\alpha}\th_{ik}^{\alpha}\th_{kj}^{\alpha}.\label{G2}
\end{align}
	From (\ref{eq208}), (\ref{eq210}), (\ref{G1}) and (\ref{G2}), we obtain
	\begin{align}
	&(n-2)\big(\rho_i\rho_j-\rho_{ij}-|\nabla \rho|^2\delta_{ij}\big)-(\Delta \rho)\delta_{ij}
	=e^{2\rho}\tR_{ij}-R_{ij}\nonumber\\
	 =&(n-1)(e^{2\rho}-c+\sum_{\alpha}\rho_{\alpha}^2)\delta_{ij}-(n-2)\sum_{\alpha}\rho_{\alpha}h_{ij}^{\alpha}-\sum_{\alpha}nH^\alpha\rho_{\alpha}\delta_{ij}.\label{eq3.7}
   \end{align}
Contracting \eqref{eq3.7} with $\delta_{ij}$ and $T_{ij}$ respectively, we obtain
\begin{equation}\label{deltarho}
	-2\Delta \rho=(n-2)|\nabla \rho|^2+n(e^{2\rho}-c+\sum_{\alpha}\rho_{\alpha}^2)-2\sum_{\alpha}nH^\alpha\rho_{\alpha},
\end{equation}
	\begin{align*}
	&(n-1)\tr T(e^{2\rho}-c+\sum_{\alpha}\rho_{\alpha}^2)\\
	=&(n-2)\sum_{\alpha, i, j}\rho_{\alpha}h_{ij}^{\alpha}T_{ij}+\sum_{\alpha}nH^\alpha\rho_{\alpha}\tr T+(n-2)\sum_{i, j}\rho_i\rho_jT_{ij}\\
	&-(n-2)\sum_{i, j}\rho_{ij}T_{ij}-(n-2)\tr T |\nabla\rho|^2-\tr T(\Delta\rho)\\
	=&(n-2)\sum_{\alpha, i, j}\rho_{\alpha}h_{ij}^{\alpha}T_{ij}+\sum_{\alpha}nH^\alpha\rho_{\alpha}\tr T+(n-2)\sum_{i, j}\rho_i\rho_jT_{ij}\\
	&+(n-2)L_T\rho-(n-2)\tr T |\nabla\rho|^2\\
	&+\frac{\tr T}{2}\Big((n-2)|\nabla\rho|^2+n(e^{2\rho}-c+\sum_{\alpha}\rho_{\alpha}^2)-2\sum_{\alpha}nH^\alpha\rho_{\alpha}\Big).
	\end{align*}
	Hence we get
	\begin{align*}
	&\frac{n-2}{2}\tr T(e^{2\rho}-c+\sum_{\alpha}\rho_{\alpha}^2)\\
	=&(n-2)\sum_{\alpha, i, j}\rho_{\alpha}h_{ij}^{\alpha}T_{ij}+(n-2)L_T\rho-\frac{(n-2)\tr T}{2}|\nabla\rho|^2+(n-2)\sum_{i,j}\rho_i\rho_jT_{ij}.
\end{align*}
As we assume that $n>2$, we obtain that
\begin{align*}
	\tr T(e^{2\rho}-c+\sum_{\alpha}\rho_{\alpha}^2)
	=2\sum_{\alpha, i, j}\rho_{\alpha}h_{ij}^{\alpha}T_{ij}+2L_T\rho-\tr T|\nabla\rho|^2+2\sum_{i,j}\rho_i\rho_jT_{ij},
\end{align*}
from which we immediately get
\begin{equation*}
	e^{2\rho}\tr T=c\tr T+2L_T\rho-\tr T|(\bar{\nabla}\rho)^\bot|^2+2\langle H_T, (\bar{\nabla}\rho)^\bot\rangle-T'(\nabla\rho,\nabla\rho),
\end{equation*}
	where $T'=(\tr T)I-2T$, $\nabla\rho=\sum_i\rho_ie_i, (\bar{\nabla}\rho)^\bot=\sum_\alpha\rho_\alpha e_\alpha=\bar{\nabla}\rho-\nabla\rho.$
\end{proof}
\begin{rem}\label{remn2}
We note that if $n=2$, $T_{ij}=\delta_{ij}$, then $L_T=-\Delta$ is the Laplacian on $M$, from \eqref{deltarho}, one can see that
Proposition \ref{prop3.2} is still valid for this case.
\end{rem}
\noindent{\bf Proof of the inequality \eqref{eq_thm1}:}
Since we assume that $T$ is positive definite, the operator $L_T$ defined by \eqref{LT} is  elliptic and has a discrete nonnegative spectrum.
The first eigenvalue of $L_T$ is $0$ and the corresponding eigenfunctions are nonzero constant functions. Lemma \ref{lem3.1} implies each coordinate function $\Phi^A$ is $L^2$-orthogonal to the first eigenfunction, then by using the min-max principle, we have
\begin{equation}\begin{aligned}\label{eq31}
\la_2^{L_T}\int_M(\Phi^A)^2\,dv_M\leq\int_M\Phi^AL_T\Phi^A\,dv_M, \quad (1\leq A\leq n+p+1).
\end{aligned}\end{equation}
By using (\ref{eq27}), we have
\begin{equation}\begin{aligned}\label{eq32}
\sum_{A=1}^{n+p+1}\Phi^A_{\,i}\Phi^A_{\,j}=e^{2\rho}\sum_{A=1}^{n+p+1}\tn_i\Phi^A\tn_j\Phi^A=e^{2\rho}\delta_{ij}.
\end{aligned}\end{equation}
Summing up (\ref{eq31}) over $A$ and using (\ref{eq32}) and the fact that $\sum\limits_{A=1}^{n+p+1}(\Phi^A)^2=1$, we obtain
\begin{align}
\la_2^{L_T}\,V(M)&\leq\sum_{A=1}^{n+p+1}\int_M\Phi^AL_T\Phi^A\,dv_M=\sum_{A=1}^{n+p+1}\int_M\sum_{i,j}\Phi^A_{\,i}\Phi^A_{\,j}T_{ij}\,dv_M\nonumber\\
&=\int_M\sum_{i,j}e^{2\rho}\delta_{ij}T_{ij}\,dv_M=\int_Me^{2\rho}\tr T\,dv_M.\label{eigen}
\end{align}
Since $T'$ is semi-positive definite and $L_T$ is self-adjoint,  from \eqref{eigen} and Proposition \ref{prop3.2}, we have
\begin{align}
	\la_2^{L_T}\,V(M)&\leq \int_Me^{2\rho}\tr T\,dv_M\nonumber\\
	&\leq\int_M \left(c\tr T-\tr T\left|(\bar{\nabla}\rho)^\bot -\frac{1}{\tr T}H_T\right|^2+\frac{|H_T|^2}{\tr T}\right)\,dv_M\label{eq33}\\
	&\leq\int_M \left(c\tr T+\frac{|H_T|^2}{\tr T}\right)\,dv_M.\label{eq34}
\end{align}
\qed

\subsection{Sufficient condition for the equality in \eqref{eq_thm1}}

First we give a Takahashi-type result which characterizes the $T$-minimal submanifold in a sphere and can be compared with Theorem \ref{thm1.1}.

\begin{prop}\label{lem5.1}
	Let $x: M\to \mathbb{S}^{n+m}$ be an $n$-dimensional closed orientable submanifold in a sphere $\mathbb{S}^{n+m}$ of constant curvature $c' (>0)$ and $T$ be a symmetric, divergence-free $(1,1)$-tensor on $M$.  Then $M$ is a $T$-minimal submanifold in $\mathbb{S}^{n+m}$ if and only if $L_{T}x= c'(\tr T)x$.  As a corollary, assume that $T$ is positive definite and $\tr T$ is constant, if $M$ is $T$-minimal in the sphere $\mathbb{S}^{n+m}$ of constant curvature $c' (>0)$, then  $\lambda_2^{L_T}\leq c'(\tr T)$.
\end{prop}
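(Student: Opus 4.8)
The plan is to prove the Takahashi-type characterization by a direct computation with the position vector $x$ viewed as an $\mathbb{R}^{n+m+1}$-valued map, exactly mirroring the structure equations \eqref{eq_d2x}. First I would write down the second covariant derivative of $x$ for a submanifold of a sphere $\mathbb{S}^{n+m}$ of constant curvature $c'$; the analogue of \eqref{eq_d2x} is $x_{ij}=\sum_\alpha h_{ij}^\alpha e_\alpha - c'\,\delta_{ij}\,x$, where now the sum over $\alpha$ ranges over the $m$ normal directions of $M$ inside $\mathbb{S}^{n+m}$. Applying the operator $L_T$ componentwise, $L_T x = -\sum_{i,j} T_{ij} x_{ij} = -\sum_{i,j}T_{ij}\big(\sum_\alpha h_{ij}^\alpha e_\alpha - c'\delta_{ij}x\big) = -\sum_\alpha\big(\sum_{i,j}T_{ij}h_{ij}^\alpha\big)e_\alpha + c'(\tr T)\,x = -H_T + c'(\tr T)\,x$, using the definition \eqref{DEFHT} of $H_T$. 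Hence $L_T x = c'(\tr T)\,x$ if and only if $H_T\equiv 0$, i.e. if and only if $M$ is $T$-minimal; this is the asserted equivalence.

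For the corollary, I would assume $T$ is positive definite (so $L_T$ is elliptic with discrete nonnegative spectrum and $0$ as its first eigenvalue with constant eigenfunctions) and $\tr T$ constant, and suppose $M$ is $T$-minimal, so $L_T x^A = c'(\tr T) x^A$ for each coordinate $x^A$, $A=1,\dots,n+m+1$. The point is that each $x^A$ is a genuine eigenfunction of $L_T$ with eigenvalue $c'(\tr T)$, and since $M$ is a closed submanifold of the sphere not all $x^A$ can be constant (indeed $\sum_A (x^A)^2 = 1/c'$ forces the non-constant ones to exist, and at least one $x^A$ must be non-constant since $M$ has positive dimension). If some $x^A$ is non-constant, it is an eigenfunction orthogonal in general position to the constants — more carefully, one replaces $x^A$ by $x^A - \frac{1}{V(M)}\int_M x^A\,dv_M$, which is still an eigenfunction with the same eigenvalue $c'(\tr T)$ (here constancy of $\tr T$ is used so that constants remain in the kernel and the shifted function still satisfies $L_T(x^A - \bar{x}^A) = c'(\tr T)(x^A-\bar x^A)$), is $L^2$-orthogonal to the first eigenspace, and is non-zero for at least one $A$. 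By the variational (min-max) characterization of $\lambda_2^{L_T}$, the existence of such a function in the orthogonal complement of the constants gives $\lambda_2^{L_T}\le c'(\tr T)$.

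The main obstacle, such as it is, is a small bookkeeping point rather than a deep difficulty: one must be careful that after subtracting the mean the function $x^A-\bar x^A$ is still an eigenfunction (which needs $\tr T$ constant, since $L_T(\text{const})=0$ only relies on $T$ being divergence-free, but the eigenvalue equation $L_T x^A = c'(\tr T)x^A$ only has a clean constant right-hand side when $\tr T$ is constant), and that at least one shifted coordinate is not identically zero (which holds because $\sum_A (x^A)^2$ is constant, so if every $x^A$ were constant then $x$ would be a single point, contradicting that $x$ is an immersion of a manifold of dimension $n\ge 1$). Once these are observed, the min-max inequality is immediate. I would present the computation of $L_T x$ first, state the equivalence, then deduce the corollary in a short paragraph.
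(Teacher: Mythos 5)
Your proof is correct and follows essentially the same route as the paper: the identity $L_Tx=c'(\tr T)x-H_T$ from the structure equations gives the equivalence, and the corollary follows because each nonzero coordinate function is then an eigenfunction for the positive eigenvalue $c'(\tr T)$, which therefore is at least $\lambda_2^{L_T}$. One small imprecision: your parenthetical claim that $x^A-\bar{x}^A$ still satisfies $L_T(x^A-\bar{x}^A)=c'(\tr T)(x^A-\bar{x}^A)$ is only valid because $\bar{x}^A=0$ automatically (integrate the eigenvalue equation against the constant first eigenfunction and use self-adjointness of $L_T$ together with $c'(\tr T)>0$); once this is noted, the subtraction is unnecessary and the min-max step is immediate, exactly as in the paper.
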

\begin{proof}
	The proof is inspired by the famous Takahashi theorem (cf. \cite{Tak66}).
	Let $x$ be the position vector of $M$ in $\mathbb{S}^{n+m}\subset\mathbb{R}^{n+m+1}$, then using \eqref{eq_d2x}, we have
$$L_Tx=-\sum_{i,j}T_{ij}x_{ij}=c'(\tr T) x-H_T,$$ which shows that  $L_{T}x= c'(\tr T)x$ if and only if $H_T=0$, i.e., $M$ is $T$-minimal in $\mathbb{S}^{n+m}$.

When $T$ is positive definite and $\tr T$ is constant, if $M$ is $T$-minimal  in the sphere $\mathbb{S}^{n+m}$ of constant curvature $c' (>0)$,  then we have $L_{T}x= c'(\tr T)x$, which means that $c'(\tr T)$ is a positive eigenvalue of $L_T$ and each coordinate function (if  it is not $0$) is an eigenfunction corresponding to $c'(\tr T)$, so we get $\lambda_2^{L_T}\leq c'(\tr T)$.
\end{proof}

\noindent \textbf{Proof of the sufficient condition for the equality in \eqref{eq_thm1}:}
Assume that  $\tr T$ is constant and  $M$ is $T$-minimal in a geodesic sphere $\Sigma_c$ of $\R$, where the geodesic radius $r_c$ of $\Sigma_c$ is given by
\begin{equation}\label{eq_rad}
r_0=\left(\frac{\tr T}{\lambda_2^{L_T}}\right)^{1/2},\quad r_1=\arcsin r_0,\quad r_{-1}=\arsinh r_0.
\end{equation}
To show that the equality in \eqref{eq_thm1} holds,
we need to compute the right hand side of \eqref{eq_thm1}.

When $M$ is $T$-minimal in a geodesic sphere $\Sigma_c$ of constant curvature $c'$ of $\R$ ($c'\geq 1$ for $c=1$ and $c'>0$ for $c=0,-1$), we choose the unit normal vector of the immersion from $\Sigma_c$ to $\R$ to be $e_{n+1}$, and let $\{e_{n+2}, \cdots, e_{n+p}\}$ be the normal frame of the immersion from $M$ to $\Sigma_c$ (When $p= 1$, $M=\Sigma_c$ and there is only one normal vector $e_{n+1}$). We choose an orthonormal tangent frame $\{e_1,\cdots, e_n\}$ on $M$, then we have $h_{ij}^{n+1}=k\delta_{ij}$, where $k=\sqrt{c'-c}\geq 0$ is the principal curvature of $\Sigma_c$ in $\R$ (the expression of $k$ can be obtained by using Gauss equation, and we can always choose $e_{n+1}$ such that $k\geq 0$). Since $M$ is $T$-minimal in $\Sigma_c$, we have  $\sum_{\beta=n+2}^{n+p}T_{ij}h_{ij}^{\beta}e_{\beta}=0$, so we get that
$H_T=\sum_{i,j}T_{ij}h_{ij}^{n+1}e_{n+1}=(\tr T)ke_{n+1}$. Since $\tr T$ is constant, we immediately get that the right side of \eqref{eq_thm1} is $(c+k^2)\tr T=c'\tr T$.

On the other hand, a basic fact is that the geodesic radius $r_c$ and the curvature $c'$ of $\Sigma_c$ have the following relation \begin{equation}\label{eq_rad1}c'=\frac{1}{r_0^2}=\frac{1}{\sin^2r_1}=\frac{1}{\sinh^2r_{-1}}.\end{equation}
Therefore, by using \eqref{eq_rad}, we get $\lambda_2^{L_T}=c'\tr T$.  This completes the proof of the sufficient condition for the equality in \eqref{eq_thm1}.\qed

\subsection{Necessary condition for the equality in \eqref{eq_thm1}}

In this subsection, we discuss the necessary condition for the equality in \eqref{eq_thm1}.  For convenience, we will denote $N=n+p$, and set $x=(\tilde{x}, x^0)$ for any vector $x\in\mathbb{R}^{N+1}$, where $\tilde{x}=(x^1,\cdots, x^N)\in\mathbb{R}^{N},x^0\in\mathbb{R}^{1}$.

First of all, note that if equality holds in \eqref{eq_thm1}, then we have the following necessary conditions:

(N1) $L_T\Phi^A=\lambda_2^{L_T}\Phi^A$ on $M$ for all $A (0\leq A\leq N)$, which is obtained from \eqref{eq31}.

(N2) $|\nabla \rho|=0$ on $M$, which is obtained by using \eqref{eq33}, Proposition \ref{prop3.2} and the assumption that $T'$ is positive definite.

(N3) $H_T=(\tr T) (\bar{\nabla}\rho)^\bot$ on $M$, which is obtained from \eqref{eq34}.

First, we prove that $\tr T$ is constant.
By using the condition (N1) and the fact that $\sum_{A=0}^N(\Phi^A)^2=1$, we obtain that
\begin{equation}\label{trtc1}
0=\frac{1}{2}L_T(\sum_{A=0}^N(\Phi^A)^2)=\sum_{A=0}^N\Phi^AL_T\Phi^A-\sum_{A,i,j}T_{ij}\Phi^A_i\Phi^A_j=\lambda_2^{L_T}-e^{2\rho}\tr T,
\end{equation}
where we used condition (N1) and \eqref{eq32} in the last equality. On the other hand, condition (N2) implies that $\rho|_M$ is constant, so
we immediately get that $\tr T=\lambda_2^{L_T}/e^{2\rho}$ is a positive constant from \eqref{trtc1}.

In order to prove that $M$ is $T$-minimal in a geodesic sphere $\Sigma_c$ of $\R$, where the geodesic radius  $r_c$ of $\Sigma_c$ is given by
$r_0=\left(\frac{\tr T}{\lambda_2^{L_T}}\right)^{1/2},~r_1=\arcsin r_0,~r_{-1}=\arsinh r_0$,
we need to write down the immersion $\Phi$ explicitly.
With out loss of generality, we can assume that $c=1, 0$ or $-1$.
Recall that  (cf. \cite{MR86,LW12}) for each $g\in B^{N+1}$, we can define a conformal map $\gamma$ on $\mathbb{S}^N(1)$:
\begin{equation}\label{eq_gamma}
\gamma_g(x)=\frac{x+(\mu f+\lambda)g}{\lambda(1+f)}, \quad \forall x\in \mathbb{S}^N(1),
\end{equation}
where $B^{N+1}$ is the open unit ball in $\mathbb{R}^{N+2}$, $x$ is the position vector of $\mathbb{S}^N(1)$, and
\begin{equation}\label{eq_f}
\lambda=(1-|g|^2)^{-1/2},\quad\mu=(\lambda-1)|g|^{-2},\quad f(x)=\l x, g\r.
\end{equation}
When $g=0$, we set $\lambda=1,\mu=0, \gamma_0(x)=x.$

We deal with the three cases ($c=1, 0$ or $-1$) respectively.

\textbf{Case 1. $c=1.$} In this case, the conformal map $\Gamma: \mathbb{S}^N(1)\to \mathbb{S}^N(1)$ in Lemma \ref{lem3.1} is given by
\begin{equation}\label{conf1}
\Gamma=\gamma_g
\end{equation}
for certain $g\in B^{N+1}$, where $x$ is the position vector of $\mathbb{S}^N(1)$ (cf. \cite{ESI00,MR86,LW12}).

We denote $h_1$ the standard metric on $\mathbb{S}^N(1)$, and set $\Gamma^*h_1=e^{2\rho}h_1$, then
by direct computation, we can obtain
\begin{equation}\label{eq_rho1}
e^{2\rho}=\frac{1}{\lambda^2(1+f)^2},\quad
\rho=-\ln \lambda-\ln(1+f), \quad \rho_{A}=-\frac{f_A}{1+f}\ (1\leq A\leq N).
\end{equation}
From the lase equation of \eqref{eq_rho1}, it is obvious that $\rho$ is constant if and only if $f$ is constant.

First, we note that condition (N3) and the assumption that $H_T$ is not identically zero imply that $\rho$ is not constant on $\mathbb{S}^N(1)$, then we have $g\neq 0$. Otherwise, if $g=0$, then $\lambda=1,\mu=0, \gamma_0(x)=x,$ which means that $\gamma_0$ is the identity map, so we have that $\rho\equiv 0$ on $\mathbb{S}^N(1)$, which is a contradiction.
Next, as $\rho$ is not constant on $\mathbb{S}^N(1)$, condition (N2) implies that $M$ lies in  a level set $\{x\in \mathbb{S}^N(1) \mid \rho(x) = b\}$ for some constant $b$. We note that $\rho$ is constant if and only if $f$ is constant, $g\neq 0$, so we get that $M$ lies in a totally umbilical hypersurface
$\{x\in \mathbb{S}^N(1)\mid f(x)=\l x,g\r=a\}$ of $\mathbb{S}^N(1)$ for some constant $a$.
We parameterize $\mathbb{S}^N(1)$ by the geodesic polar coordinates $(r, s^1,\cdots,s^{N-1})$ centered at the north pole $(0,\cdots, 0, 1)$,  where $(s^1,\cdots,s^{N-1})$ are the spherical coordinates on $\mathbb{S}^{N-1}(1)$. Under the geodesic polar coordinates, the position vector can be written as $x=(\tilde{x},\cos r)$ with
$|\tilde{x}|^2=\sin^2 r$. Up to an isometry of $\mathbb{S}^N(1)$, we can assume that $g=(0,g^0)$ and $M$ lies in
$$\Sigma_1=\big\{x\in \mathbb{S}^N(1) \mid x^0=\cos r_1\big\}$$
for some constant $r_1\in (0,\pi)$. Without loss of generality,  we assume that $r_1\in (0, \pi/2]$ in the following.

Next, we prove that $M$ is $T$-minimal in $\Sigma_1$.
Since $g=(0,g^0)$, from the expression of $\rho$ (see \eqref{eq_rho1}) and the definition of $f$ (see \eqref{eq_f}),
we find that $\rho$ only depends on $r$ under the geodesic polar coordinates. As $r=r_1$ is constant on $\Sigma_1$, we know that
when restricted to $\Sigma_1$, $\bar{\nabla}\rho$ lies in the normal bundle of $\Sigma_1$ in $\mathbb{S}^N(1)$, which combines with the
condition (N3) imply that $M$ is $T$-minimal in $\Sigma_1$.

Finally, we determine the geodesic radius of $\Sigma_1$.
The following lemma is the key step to fix the radius for the case $c=1$.
\begin{lem}\label{lem5-1}
	Let $\nu=-\p_r$ and $k$ be the  unit normal vector and the principal curvature of $\Sigma_1$ in $\mathbb{S}^N(1)$ respectively, then we have
	\begin{equation*}
	\left.\bar{\nabla}_\nu\rho\right|_{\Sigma_{1}}=k.
	\end{equation*}
\end{lem}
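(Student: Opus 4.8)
The plan is to compute the radial derivative $\bar\nabla_\nu\rho$ of the conformal factor directly from the explicit formula for $\rho$ in \eqref{eq_rho1}, restrict it to the level hypersurface $\Sigma_1=\{x^0=\cos r_1\}$, and then match the result with the principal curvature $k$ of $\Sigma_1$ in $\mathbb{S}^N(1)$, which we know is $k=\cot r_1$ for a geodesic sphere of radius $r_1$ (with $\nu=-\partial_r$ the inward-pointing unit normal in the convention chosen above). Since $g=(0,g^0)$, the function $f(x)=\langle x,g\rangle=g^0\cos r$ depends only on $r$, so $\bar\nabla\rho$ is radial and $\bar\nabla_\nu\rho=-\partial_r\rho$.

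First I would differentiate $\rho=-\ln\lambda-\ln(1+f)$ with respect to $r$: since $\lambda$ is a constant (depending only on $|g|$), we get $\partial_r\rho=-\frac{\partial_r f}{1+f}=-\frac{-g^0\sin r}{1+g^0\cos r}=\frac{g^0\sin r}{1+g^0\cos r}$, hence $\bar\nabla_\nu\rho=-\partial_r\rho=-\frac{g^0\sin r}{1+g^0\cos r}$, evaluated at $r=r_1$. Second, I need to pin down $g^0$ in terms of $r_1$. This comes from the normalization in Lemma \ref{lem3.1}, namely that $\Phi=\Gamma\circ x$ has vanishing coordinate integrals; more concretely, the constraint that forces $M$ into $\Sigma_1$ is that $M$ lies in a level set $\{f=a\}$ and at the same time $M$ is $T$-minimal there, but the precise value of $g^0$ is determined by the balancing condition. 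I expect one gets a relation of the form $g^0=-\cos r_1$ (or an equivalent expression making the geometry consistent), which would then give $\bar\nabla_\nu\rho|_{\Sigma_1}=\frac{\cos r_1\sin r_1}{1-\cos^2 r_1}=\frac{\cos r_1\sin r_1}{\sin^2 r_1}=\cot r_1=k$, as desired.

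The main obstacle will be establishing the identification $g^0=-\cos r_1$ (up to sign conventions): this is where the $L^2$-orthogonality normalization of $\Phi$ and the geometry of the level hypersurface must be combined carefully. One clean way to do this is to observe that $\Sigma_1$ is the image under $\Gamma=\gamma_g$ of another geodesic sphere (since conformal diffeomorphisms of $\mathbb{S}^N(1)$ map totally umbilical hypersurfaces to totally umbilical hypersurfaces), and to track how $\gamma_g$ acts on the relevant hyperplane section; alternatively, one can use the fact, already in the text, that the principal curvature $k$ of $\Sigma_1$ in $\mathbb{S}^N(1)$ and its geodesic radius are related by $k=\cot r_1$ together with the direct computation of $\bar\nabla_\nu\rho$, and simply read off the compatible value of $g^0$. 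Once $g^0$ is fixed, the identity $\bar\nabla_\nu\rho|_{\Sigma_1}=k$ follows from the elementary trigonometric simplification above, and this is precisely the input needed to later determine $r_1=\arcsin r_0$ via \eqref{eq_rad1} and the constancy of $\tr T$.
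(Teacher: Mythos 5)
Your computation of $\bar\nabla_\nu\rho=-\partial_r\rho=-\frac{g^0\sin r}{1+g^0\cos r}$ and your identification of the target relation $g^0=-\cos r_1$ are both correct, but the proposal leaves the one genuinely nontrivial step --- actually proving $g^0=-\cos r_1$ --- unestablished. Neither of your two suggested routes closes this gap: the first (tracking how $\gamma_g$ maps umbilical hypersurfaces) is not developed, and the second, ``read off the compatible value of $g^0$'' from the desired identity $\bar\nabla_\nu\rho|_{\Sigma_1}=k$, is circular, since that identity is exactly what the lemma asserts. The paper's argument hinges on a specific observation absent from your proposal: because $\lambda$, $\mu$, $g^0$ are constants and $x^0=\cos r_1$, $f=g^0\cos r_1$ are constant on $M$, the coordinate function $\Phi^0=\frac{x^0+(\mu f+\lambda)g^0}{\lambda(1+f)}$ is constant on $M$, hence $L_T\Phi^0=0$; combined with the equality condition (N1), namely $L_T\Phi^0=\lambda_2^{L_T}\Phi^0$ with $\lambda_2^{L_T}>0$, this forces $\Phi^0\equiv 0$ on $M$. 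Setting the numerator to zero and using $\mu(g^0)^2=\lambda-1$ from \eqref{eq_f} reduces $x^0+\mu x^0(g^0)^2+\lambda g^0=0$ to $\lambda(x^0+g^0)=0$, i.e.\ $g^0=-x^0=-\cos r_1$, after which your trigonometric simplification gives $\cot r_1=k$.

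A secondary omission: you should also rule out the boundary case $r_1=\pi/2$, where $x^0=0$ and the vanishing of the numerator would force $g^0=0$, contradicting $g\neq 0$; the paper treats this case explicitly to conclude it cannot occur. Without the $\Phi^0=0$ observation --- which the authors single out in the introduction as the key point of Lemmas \ref{lem5-1}--\ref{lem5--1} --- the proof is incomplete, since $g^0$ is a priori an arbitrary parameter of the conformal map supplied by Lemma \ref{lem3.1}, and nothing in the computation of $\bar\nabla_\nu\rho$ alone ties it to $r_1$.
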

\begin{proof}
As choose $\nu=-\p_r$ on $\Sigma_1$,  the principal curvature $k=\frac{\cos r_1}{\sin r_1}$. Note that the function $\rho$ only  depends on $r$.  From \eqref{eq_rho1}, we have
	\begin{equation*}
	\bar{\nabla}_\nu\rho=\p_r\ln(1+g^0\cos r)
	=\frac{-g^0\sin r }{(1+g^0\cos r)}.
	\end{equation*}
	
On the other hand, as $\lambda,~\mu,~g^0$ are constant, $x^0$ and $f$ are constant on $M$, we have that $\Phi^0=\frac{x^0+\mu(x^0g^0+\lambda)g^0}{\lambda(1+f)}$ is also constant on $M$ and hence $L_T\Phi^0=0$. We claim that $\Phi^0$ must be $0$ on $M$.
If $\Phi^0\neq 0$, then $\Phi^0$ is the 1st eigenfunction of $L_T$, which contradicts condition (N1). So  $\Phi^0$ must be $0$ on $M$, that is,
	\begin{equation}\label{g01}
		x^0+\mu(x^0g^0+\lambda)g^0=0.
	\end{equation}
When $r_1\in (0, \pi/2)$, note that $g=(0,g^0), |g|^2=(g^0)^2$, $x^0=\cos r_1>0$, we immediately get that $g^0=-x^0=-\cos r_1$ on $M$. Therefore, we have
	\begin{equation*}
	\left.\bar{\nabla}_\nu\rho\right|_{\Sigma_{1}}=\frac{\cos r_1}{\sin r_1}=k.
	\end{equation*}
When $r_1=\pi/2$, we have $x^0=0$, hence from \eqref{g01}, we get that $g^0$ must be $0$, which contradicts $g\neq 0$. Hence, the case $r=\pi/2$ cannot occur.
\end{proof}

Now, we  determine the geodesic radius of $\Sigma_1$.
  We denote the sectional curvature of $\Sigma_c$ by $c'$, then from Gauss equation, we have that $c'=c+k^2$.
  By using Lemma \ref{lem5-1} and condition (N3), we derive that $|H_T|^2=k^2 (\tr T)^2$, as we have proved that $\tr T$ is constant, we get that the right hand side of $\eqref{eq_thm1}$ equals $(1+k^2)\tr T=c' \tr T$. Since that the equality in \eqref{eq_thm1} is attained, we derive that $\lambda_2^{L_T}=c' \tr T$, hence $c'=\lambda_2^{L_T}/\tr T$, then by using
the relation between the geodesic radius $r_c$ and the sectional curvature $c'$ of $\Sigma_c$ (see  \eqref{eq_rad1}), we obtain that the geodesic radius $r_1$ of $\Sigma_1$ is given by $r_1=\arcsin r_0$ with $r_0=\left(\frac{\tr T}{\lambda_2^{L_T}}\right)^{1/2}$.  This completes the proof of the necessary condition for the equality in \eqref{eq_thm1} in the case $c=1$.

\begin{rem}\label{rem3.6}
Once we have obtained that $g^0=-x^0=-\cos r_1$ on $M$,  the radius $r_1$ can also be solved from the expression of $\rho$. However, this method also need to use the observation that $\Phi^0=0$, hence, it is essentially the same as the method presented above.
\end{rem}

\textbf{Case 2. $c=0$.}
There is a conformal map $\pi_0: \mathbb{R}^N \to \mathbb{S}^N(1)\subset \mathbb{R}^{N+1}$ given by ``stereographic projection":
\begin{equation}\label{eq_pi0}
\pi_0(x)=\Big(\frac{2x}{1+|x|^2},\frac{|x|^2-1}{1+|x|^2}\Big) \in \mathbb{S}^N,
\end{equation}
where $x$ is the position vector in $\mathbb{R}^N$.
In this case, the conformal map $\Gamma: \mathbb{R}^N\to \mathbb{S}^N(1)$ in Lemma \ref{lem3.1} is given by (cf. \cite{ESI00})
\begin{equation}\label{conf0}
\Gamma=\gamma_g\circ \pi_0
\end{equation}
for certain $g\in B^{N+1}$.

We denote $h_0$ the standard metric on $\mathbb{R}^N$, and set $\Gamma^*h_1=\pi_0^*\big(\gamma_g^*h_1\big)=e^{2\rho}h_0$. By direct computation, we have
\begin{equation}\label{eq_rho0}
e^{2\rho}= \frac{4}{(1+|x|^2)^2}\cdot\frac{1}{\lambda^2\big(1+f\circ\pi_0(x)\big)^2},
\end{equation}
where $f:\mathbb{S}^N(1)\to \mathbb{R}$ is defined by \eqref{eq_f}.
From \eqref{eq_rho0}, we have that
$\rho$ is constant if and only if $\big(1+f\circ\pi_0(x)\big)(1+|x|^2)=a$, where $a$ is a constant. Suppose $g=(\tilde{g},g^0)\in \mathbb{R}^N\times\mathbb{R}^1$, then we have $1+|x|^2+2\l x,\tilde{g}\r +(|x|^2-1)g^0=a$, which is equivalent to
\begin{equation}\label{eq_sec0}
\left| x+\frac{\tilde{g}}{1+g^0}\right|^2=\frac{(1+g^0)a+|g|^2-1}{(1+g^0)^2},
\end{equation} which means that $M$ lies in an $(N-1)$-dimensional geodesic sphere $\Sigma_0$ of $\mathbb{R}^N$.
We parameterize $\mathbb{R}^N$ by the geodesic polar coordinates $(r, s^1,\cdots,s^{N-1})$ centered at the origin $(0,\cdots,  0)$,  where $(s^1,\cdots,s^{N-1})$ are the spherical coordinates on $\mathbb{S}^{N-1}(1)$. Up to an isometry of $\mathbb{R}^N$, we can assume that
\begin{equation}\label{eq_sig0}
\Sigma_0=\big\{x\in  \mathbb{R}^N\mid |x|=r_0\big\}
\end{equation}
for some $r_0>0$.
Then by repeating the argument above, there exists an element $g=(\tilde{g},g^0)\in B^{N+1}$ such that \eqref{eq_sec0} holds.

Now, we prove that $M$ is $T$-minimal in $\Sigma_0$ and determine the geodesic radius of $\Sigma_0$.
If $g=(0,g^0)$, from the expression of $\rho$ (see \eqref{eq_rho0}) and the definition of $f$ and $\pi_0$ (see \eqref{eq_f} and \eqref{eq_pi0}),
we find that $\rho$ only depends on $r$ under the geodesic polar coordinates, as $r=r_0$ is constant on $\Sigma_0$, we know that
when restricted to $\Sigma_0$, $\bar{\nabla}\rho$ lies in the normal bundle of $\Sigma_0$ in $\mathbb{R}^N$, which combines with the
condition (N3) imply that $M$ is $T$-minimal in $\Sigma_0$.
The following lemma is the key step to fix the radius for the case $c=0$.
\begin{lem}\label{lem5-0}
	Let $\nu=-\p_r$ and $k$ be the unit normal vector and the  principal curvature of $\Sigma_0$ in $\mathbb{R}^N$ respectively. If $g=(0,g^0)$, then we have
	\begin{equation*}
	\left.\bar{\nabla}_\nu\rho\right|_{\Sigma_{0}}=k.
	\end{equation*}
\end{lem}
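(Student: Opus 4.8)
The plan is to compute $\bar\nabla_\nu\rho$ explicitly on $\Sigma_0$ using the formula \eqref{eq_rho0} for $e^{2\rho}$, mirroring the proof of Lemma \ref{lem5-1} in the case $c=1$. First I would substitute $g=(0,g^0)$ into \eqref{eq_f} and \eqref{eq_pi0} to get $f\circ\pi_0(x)=\langle \pi_0(x),g\rangle=g^0\frac{|x|^2-1}{1+|x|^2}$, so that
\begin{equation*}
\rho=\ln 2-\ln(1+|x|^2)-\ln\lambda-\ln\Big(1+g^0\tfrac{|x|^2-1}{1+|x|^2}\Big)
=\ln 2-\ln\lambda-\ln\big((1+g^0)|x|^2+(1-g^0)\big),
\end{equation*}
which depends only on $r=|x|$ under geodesic polar coordinates centered at the origin. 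Then, with $\nu=-\partial_r$, I would differentiate: $\bar\nabla_\nu\rho=\partial_r\ln\big((1+g^0)r^2+(1-g^0)\big)=\dfrac{2(1+g^0)r}{(1+g^0)r^2+(1-g^0)}$, evaluated at $r=r_0$.

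Next I would pin down $g^0$ in terms of $r_0$, exactly as in the $c=1$ case, via the vanishing of the component $\Phi^0$. Since $\rho$ is constant on $M$ (condition (N2)) and $x$ is constant on $M$ after restriction (all points of $M$ lie on $\Sigma_0$... more precisely the relevant coordinate of $\pi_0(x)$ is constant), the function $\Phi^0$ is constant on $M$, hence $L_T\Phi^0=0$; if $\Phi^0\neq 0$ this would make $\Phi^0$ a first eigenfunction of $L_T$, contradicting (N1). Therefore $\Phi^0\equiv 0$ on $M$. Writing out $\Phi^0=\gamma_g\big(\pi_0(x)\big)^0$ from \eqref{eq_gamma} and \eqref{eq_pi0}, the equation $\Phi^0=0$ becomes a relation of the form $\pi_0(x)^0+\big(\mu f+\lambda\big)g^0=0$ with $\pi_0(x)^0=\frac{|x|^2-1}{1+|x|^2}=\frac{r_0^2-1}{r_0^2+1}$; solving this (using $|g|^2=(g^0)^2$, $\mu=(\lambda-1)|g|^{-2}$, $f=g^0\pi_0(x)^0$) yields $g^0$ as an explicit function of $r_0$. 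Substituting back into the expression for $\bar\nabla_\nu\rho$ should collapse it to the principal curvature $k$ of the round sphere $\Sigma_0=\{|x|=r_0\}$ in $\mathbb R^N$, which is $k=1/r_0$.

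The main obstacle, as in Lemma \ref{lem5-1}, is the algebra of eliminating $g^0$: one must carefully track the definitions \eqref{eq_f} of $\lambda,\mu$, handle the degenerate possibility $g^0=0$ (which, just as $r_1=\pi/2$ was excluded in the $c=1$ case, should be excluded here — if $g^0=0$ then $\rho$ is constant, contradicting that $H_T$ is not identically zero together with (N3)), and verify the final simplification $\bar\nabla_\nu\rho|_{\Sigma_0}=1/r_0=k$. I would also double-check the sign convention: $\nu=-\partial_r$ is the inward normal, and $k=1/r_0$ is the principal curvature of the geodesic sphere with respect to that choice, so the signs must match. Once this is done the lemma follows, and — exactly as in Case 1 — combining it with condition (N3) gives $|H_T|^2=k^2(\tr T)^2$, whence the right-hand side of \eqref{eq_thm1} equals $(c+k^2)\tr T=c'\tr T$ with $c'$ the sectional curvature of $\Sigma_0$; the attained equality forces $c'=\lambda_2^{L_T}/\tr T$, and \eqref{eq_rad1} then fixes the geodesic radius as $r_0=(\tr T/\lambda_2^{L_T})^{1/2}$.
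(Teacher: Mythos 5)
Your proposal follows essentially the same route as the paper: express $\rho$ as a function of $r$ alone, use the constancy of $\Phi^0$ on $M$ together with (N1) to force $\Phi^0\equiv 0$ and hence $g^0=-\frac{r_0^2-1}{1+r_0^2}$, then substitute to get $\bar{\nabla}_\nu\rho|_{\Sigma_0}=1/r_0=k$; your compactified form of $\rho$ and the resulting derivative $\frac{2(1+g^0)r}{(1+g^0)r^2+(1-g^0)}$ agree with the paper's computation. The only quibble is your aside about excluding $g^0=0$: no exclusion is needed (the substitution yields $1/r_0$ even when $g^0=0$, which just corresponds to $r_0=1$), and the justification you give is off for $c=0$ because the stereographic factor $4/(1+|x|^2)^2$ already makes $\rho$ non-constant even when $g=0$.
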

\begin{proof}
	As we choose $\nu=-\p_r$ on $\Sigma_0$, the principal curvature $k=\frac{1}{r_0}$. Note that the function $\rho$ only  depends on $r$.  By using \eqref{eq_f}, \eqref{eq_pi0}, \eqref{eq_rho0} and the fact that $|x|^2=r^2$, we have
	\begin{equation*}
	\bar{\nabla}_\nu\rho=\p_r\Bigg(\ln(1+r^2)+\ln\Big(1+\frac{r^2-1}{1+r^2}g^0\Big)\Bigg)\\
	=\frac{2r}{1+r^2}+\frac{\frac{4r}{(1+r^2)^2}g^0}{1+\frac{r^2-1}{1+r^2}g^0}.
	\end{equation*}
	
	On the other hand, $f\circ\pi_0(x)=\frac{r_0^2-1}{1+r_0^2}g^0$ is constant  restrict to  $M$, so $$\Phi^0=\frac{\frac{r_0^2-1}{1+r_0^2}+(\mu\, f\circ\pi_0(x)+\lambda)g^0}{\lambda\big(1+f\circ\pi_0(x)\big)}$$ is also constant  on $M$. By an argument similar to that in the proof of Lemma \eqref{lem5-1}, we deduce from $L_T\Phi^0=0$ that $\Phi^0=0$, from which we obtain that $g^0=-\frac{r_0^2-1}{1+r_0^2}$. Hence
		$
	\left.\bar{\nabla}_\nu\rho\right|_{\Sigma_{0}}=1/r_0=k.$
\end{proof}

  We denote the sectional curvature of $\Sigma_0$ by $c'$, then from Gauss equation, we have that $c'=k^2$.
  If $g=(0,g^0)$, then  by using Lemma \ref{lem5-0} and condition (N3), we derive that $|H_T|^2=k^2 (\tr T)^2$, as we have proved that $\tr T$ is constant, we get that the right hand side of $\eqref{eq_thm1}$ equals $k^2\tr T=c' \tr T$. Since that the equality in \eqref{eq_thm1} is attained, we derive that $\lambda_2^{L_T}=c' \tr T$, hence $c'=\frac{\lambda_2^{L_T}}{\tr T}$, then by using
the relation between the geodesic radius $r_c$ and the sectional curvature $c'$ of $\Sigma_c$ (see  \eqref{eq_rad1}), we obtain that the geodesic radius $r_0$ of $\Sigma_0$ is given by $r_0=\left(\frac{\tr T}{\lambda_2^{L_T}}\right)^{1/2}$.

If $g\neq(0,g^0)$, then from \eqref{eq_sec0} and \eqref{eq_sig0}, we know that $M$ lies in an $(N-2)$-sphere $\Sigma_0'$, which is the intersection of two $(N-1)$-spheres. We note that $\Sigma_0'$ can be regarded as a hypersphere in $\mathbb{R}^{N-1}$, where $\mathbb{R}^{N-1}$ is totally geodesic in $\mathbb{R}^N$. Therefore, we reduce the dimensions $N$ and $p$ to $N-1$ and $p-1$ respectively, so we can repeat the  discussions above up to finite times  and finally obtain that  $M$ is a $T$-minimal submanifold in a geodesic sphere $\Sigma_0$ of $\mathbb{R}^N$ with geodesic radius $r_0=\left(\frac{\tr T}{\lambda_2^{L_T}}\right)^{1/2}$.
This completes the proof of the necessary condition for the equality in \eqref{eq_thm1} in the case $c=0$.

\textbf{Case 3. $c=-1$.}
Let $\mathbb{R}^{N+1}_1$ be the Lorentz space equipped with the Lorentz metric $ds^2=dx_1^2+\cdots+dx_N^2-dx_{0}^2$, and denote the inner product by $\l,\r'$, i.e.,
\begin{equation*}
\l x,y\r '=\l \tilde{x},\tilde{y}\r-x^0y^0
\end{equation*}
for $x=(\tilde{x},x^0),y=(\tilde{y},y^0)\in \mathbb{R}^{N+1}_1$. Then
\begin{equation*}
\mathbb{H}^N(-1)=\{x\in \mathbb{R}^{N+1}_1\mid \l x,x\r'=-1, x^0\geq 1\},
\end{equation*}
which is equipped with the induced metric from $\mathbb{R}^{N+1}_1$.

There is a conformal map $\pi: \mathbb{H}^N(-1) \to B^N(1)\subset \mathbb{R}^N$ by ``stereographic projection":
\begin{equation}
\label{eq_pi1}
\pi(x)=\frac{\tilde{x}}{1+x^0}=:w(x) \in B^N(1),\quad
\pi^{-1}(w)=\Big(\frac{2w}{1-|w|^2},\frac{1+|w|^2}{1-|w|^2}\Big)\in \mathbb{R}^{N+1}_1,
\end{equation}
where $x=(\tilde{x},x^0)$ is the positive vector in $\mathbb{H}^{N}(-1)$. In fact, $\Big(B^N(1), \frac{4 |dw|^2 }{(1-|w|^2)^2}\Big)$, which has constant curvature $-1$, is the Poincar\'{e} model of hyperbolic space $\mathbb{H}^N(-1)$. We have the following basic facts.

\begin{lem}\label{lem_hy}
	If $S$ is a hypersphere in $B^N(1)$, then $\pi^{-1}(S)$ is a geodesic sphere of $\mathbb{H}^N(-1)$.
Up to an isometry of $\mathbb{H}^N(-1)$, we can assume that $\pi^{-1}(S)=\{x\in \mathbb{H}^N(-1)\mid x^0=d>1\}$, where $d$ is a constant.
\end{lem}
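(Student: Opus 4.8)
\textbf{Proof proposal for Lemma \ref{lem_hy}.}
The plan is to exploit the fact that the stereographic projection $\pi:\mathbb{H}^N(-1)\to B^N(1)$ is a conformal diffeomorphism and that the Poincar\'e model $\big(B^N(1),\frac{4|dw|^2}{(1-|w|^2)^2}\big)$ is a standard conformally flat representation of hyperbolic space. In such a model the totally geodesic hypersurfaces are precisely the intersections with $B^N(1)$ of Euclidean spheres and hyperplanes meeting $\partial B^N(1)$ orthogonally, while \emph{geodesic spheres} (metric spheres of $\mathbb{H}^N(-1)$) correspond exactly to Euclidean spheres entirely contained in $B^N(1)$, together with their limiting cases. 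So the first step is to recall (or verify by a direct computation with the explicit map \eqref{eq_pi1}) that $\pi$ sends a Euclidean hypersphere $S\subset B^N(1)$ to a connected, compact, umbilical hypersurface of $\mathbb{H}^N(-1)$; since in a space form the compact umbilical hypersurfaces are exactly the geodesic spheres, this already gives that $\pi^{-1}(S)$ is a geodesic sphere. A clean way to run this: pick any point $p\in S$ and any two tangent directions, and compute the second fundamental form of $\pi^{-1}(S)$ using the conformal change formula \eqref{eq208} with $\rho$ the conformal factor of $\pi^{-1}$; the point is that the purely Euclidean sphere $S$ is umbilical in $\mathbb{R}^N$ with $h^E_{ij}=k^E\delta_{ij}$, and \eqref{eq208} shows $\th^{\,}_{ij}=e^{-\rho}(k^E\delta_{ij}-\rho_\alpha\delta_{ij})$ remains a multiple of $\delta_{ij}$, i.e.\ umbilicity is preserved under conformal changes of the ambient metric.

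For the normalization statement, the second step is to use the isometry group of $\mathbb{H}^N(-1)$: in the hyperboloid model $O(N,1)^+$ acts transitively, and on the ball model this action realizes all the conformal automorphisms of $B^N(1)$ (the maps $\gamma_g$ of \eqref{eq_gamma} restricted appropriately, or equivalently M\"obius transformations of $B^N(1)$). Given an arbitrary geodesic sphere, apply an isometry carrying its center to the origin $0\in B^N(1)$; its image is then a Euclidean sphere centered at $0$, say $\{|w|=\tau\}$ with $0<\tau<1$. Pulling back by $\pi^{-1}$ and using the explicit formula $\pi^{-1}(w)=\big(\frac{2w}{1-|w|^2},\frac{1+|w|^2}{1-|w|^2}\big)$, every point of this sphere has the \emph{same} last coordinate $x^0=\frac{1+\tau^2}{1-\tau^2}=:d>1$. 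Hence $\pi^{-1}(\{|w|=\tau\})=\{x\in\mathbb{H}^N(-1)\mid x^0=d\}$, which is the desired form; conversely any slice $\{x^0=d>1\}$ is such a sphere, so the two descriptions match.

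The only genuine subtlety — the step I expect to be the main obstacle — is making precise which Euclidean spheres in $B^N(1)$ correspond to \emph{geodesic spheres} as opposed to other umbilical hypersurfaces of $\mathbb{H}^N(-1)$ (horospheres and equidistant hypersurfaces). A Euclidean sphere $S$ that is internally tangent to $\partial B^N(1)$ gives a horosphere, and one that meets $\partial B^N(1)$ transversally in a nontrivial sphere gives an equidistant hypersurface or a totally geodesic one; only a Euclidean sphere whose closure lies in the open ball $B^N(1)$ yields a bona fide geodesic sphere. In the statement of the lemma ``$S$ is a hypersphere in $B^N(1)$'' should be read as $\overline{S}\subset B^N(1)$, and this is exactly the case that arises in the proof of Theorem \ref{thm1.1}, since \eqref{eq_sec0}-type constraints force $M$ onto a compact metric sphere; so for our purposes it suffices to treat this case, and the argument of the previous two paragraphs then applies verbatim. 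I would therefore state the lemma with that reading understood, carry out the umbilicity computation via \eqref{eq208}, and finish with the isometry normalization as above.
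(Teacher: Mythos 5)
Your argument is correct. Note that the paper itself offers no proof of this lemma at all: it is introduced with ``We have the following basic facts'' and stated without justification, so there is nothing to compare against line by line; your write-up simply supplies the standard argument that the authors took for granted. Both halves of your proof are sound: umbilicity is preserved under the conformal change of ambient metric by \eqref{eq208}, a compact complete umbilical hypersurface of $\mathbb{H}^N(-1)$ is a geodesic sphere, and after a M\"obius normalization a geodesic sphere centred at the origin of the ball model is a Euclidean sphere $\{|w|=\tau\}$, whose image under $\pi^{-1}(w)=\bigl(\tfrac{2w}{1-|w|^2},\tfrac{1+|w|^2}{1-|w|^2}\bigr)$ is exactly the slice $\{x^0=\tfrac{1+\tau^2}{1-\tau^2}\}$ with $d=\tfrac{1+\tau^2}{1-\tau^2}>1$. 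Your caveat about reading ``hypersphere in $B^N(1)$'' as $\overline{S}\subset B^N(1)$ is the right one, and it is indeed satisfied in the only place the lemma is used: the displayed bound following \eqref{eq_sec-1} shows the radius of $S$ is strictly less than $1-|\text{centre}|$, so $S$ cannot touch $\partial B^N(1)$ and the horosphere/equidistant cases are excluded. A small remark: once you have the M\"obius normalization, the umbilicity computation is not strictly needed, since a Euclidean sphere centred at $0$ is visibly a hyperbolic distance sphere (the hyperbolic distance to the origin depends only on $|w|$); but including the umbilicity route does no harm and makes the first sentence of the lemma independent of the normalization.
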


In this case,
the conformal map $\Gamma: \mathbb{H}^N(-1)\to \mathbb{S}^N(1)$ in Lemma \ref{lem3.1} is given by (cf. \cite{ESI00})
\begin{equation}\label{conf-1}
\Gamma=\gamma_g\circ \pi_0\circ \pi
\end{equation}
for certain $g\in B^{N+1}$.

We denote $h_{-1}$ the standard metric on $\mathbb{H}^N(-1)$, and set $\Gamma^*h_1=\pi^*\Big(\pi_0^*\big(\gamma_g^*h_1\big)\Big)=e^{2\rho}h_{-1}$. By direct computation, we have
\begin{equation}
e^{2\rho}= \frac{(1-|w(x)|^2)^2}{(1+|w(x)|^2)^2}\cdot\frac{1}{\lambda^2\big(1+f(\pi_0(w(x)))\big)^2},
\label{eq_rho-1}
\end{equation}
where $f:\mathbb{S}^N(1)\to \mathbb{R}$, $\pi_0: \mathbb{R}^N\to \mathbb{S}^N(1)$ and $w(x)$ are defined by \eqref{eq_f}, \eqref{eq_pi0}  and \eqref{eq_pi1} respectively.

From \eqref{eq_rho-1}, we know that $\rho$ is constant if and only if $(1+f(\pi_0(w(x))))(1+|w(x)|^2)=a(1-|w(x)|^2)$, where $a$ is a constant.
Suppose $g=(\tilde{g},g^0)\in \mathbb{R}^N\times\mathbb{R}^1$, then we have
\begin{equation}\label{eq_a}
	a=\frac{1+|w|^2+2\l w, \tilde{g}\r}{1-|w|^2}-g^0,
\end{equation}
which implies that $1+a+g^0=\frac{2(1+\l w, \tilde{g}\r)}{1-|w|^2}>0$, where we used $|\tilde{g}|<1,|w|<1$.  Hence, we obtain that
\begin{equation}\label{eq_sec-1}
\left| w(x)+\frac{\tilde{g}}{1+a+g^0}\right|^2=1-\frac{2}{1+a+g^0}+\frac{|\tilde{g}|^2}{(1+a+g^0)^2}.
\end{equation}
Note that $1-\frac{2}{1+a+g^0}+\frac{|\tilde{g}|^2}{(1+a+g^0)^2}<1-\frac{2|\tilde{g}|}{1+a+g^0}+\frac{|\tilde{g}|^2}{(1+a+g^0)^2}=\big(1-\frac{|\tilde{g}|}{1+a+g^0}\big)^2$, thus \eqref{eq_sec-1} implies that $w(x)$ lies in a hypersphere $S$ of $B^N(1)$.

We parameterize $\mathbb{H}^N(-1)$ by the geodesic polar coordinates $(r, s^1,\cdots,s^{N-1})$ centered at the $(0,\cdots, 0, 1)$,  where $(s^1,\cdots,s^{N-1})$ are the spherical coordinates on $\mathbb{S}^{N-1}(1)$. Under the geodesic polar coordinates, the position vector can be written as $x=(\tilde{x},\cosh r)$ with
$|\tilde{x}|^2=\sinh^2 r$.  According to Lemma \ref{lem_hy}, up to an isometry of $\mathbb{H}^N(-1)$, we can assume that $M$ lies in
\begin{equation}\label{eq_sig-1}
\Sigma_{-1}=\pi^{-1}(S)=\{x\in \mathbb{H}^N(-1)\mid x^0=\cosh r_{-1}\}
\end{equation}
for some constant $r_{-1}>0$, so we obtain that  $|\tilde{x}|^2=\sinh^2 r_{-1}$ on $\Sigma_{-1}$, hence from \eqref{eq_pi1} we get that $|w(x)|^2=|\tilde{x}|^2/(1+x^0)^2=\tanh^2\frac{r_{-1}}{2}$ on $\Sigma_{-1}$.
Then by repeating the argument above, there exists an element $g=(\tilde{g},g^0)\in B^{N+1}$ such that \eqref{eq_sec-1} holds.

Now, we prove that $M$ is $T$-minimal in $\Sigma_{-1}$ and determine the geodesic radius of $\Sigma_{-1}$.
If $g=(0,g^0)$, from the expression of $\rho$ (see \eqref{eq_rho-1}) and the definition of $f$, $\pi_0$ and $\pi$ (see \eqref{eq_f}, \eqref{eq_pi0} and  \eqref{eq_pi1}),
we find that $\rho$ only depends on $r$ under the geodesic polar coordinates, as $r=r_{-1}$ is constant on $\Sigma_{-1}$, we know that
when restricted to $\Sigma_{-1}$, $\bar{\nabla}\rho$ lies in the normal bundle of $\Sigma_{-1}$ in $\mathbb{H}^N(-1)$, which combines with the
condition (N3) imply that $M$ is $T$-minimal in $\Sigma_{-1}$.
The following lemma is the key step to fix the radius for the case $c=-1$.

\begin{lem}\label{lem5--1}
	Let $\nu=-\p_r$ and $k$ be the unit normal vector and the principal curvature of $\Sigma_{-1}$ in $\mathbb{H}^N(-1)$ respectively. If $g=(0,g^0)$, then we have
	\begin{equation*}
	\left.\bar{\nabla}_\nu\rho\right|_{\Sigma_{-1}}=k.
	\end{equation*}
\end{lem}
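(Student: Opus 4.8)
The plan is to mimic exactly the computations carried out in Lemmas~\ref{lem5-1} and \ref{lem5-0}, now in the hyperbolic setting. First I would choose the unit normal $\nu=-\p_r$ on $\Sigma_{-1}$ and record the principal curvature of a geodesic sphere of radius $r_{-1}$ in $\mathbb{H}^N(-1)$, namely $k=\coth r_{-1}$, obtained from the standard comparison (or from the Gauss equation). Since $g=(0,g^0)$, the function $\rho$ in \eqref{eq_rho-1} depends only on $r$ through $|w(x)|^2=\tanh^2\frac r2$, so I would substitute $|w|^2=\tanh^2\frac r2$ and $f(\pi_0(w(x)))=\frac{|w|^2-1}{1+|w|^2}g^0$ into \eqref{eq_rho-1}, take $\ln$, and differentiate with respect to $r$ to get an explicit expression for $\bar\nabla_\nu\rho=-\p_r\rho$ in terms of $r$ and $g^0$.

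The second ingredient, exactly parallel to the previous two lemmas, is to pin down $g^0$ using the observation that the component $\Phi^0$ vanishes on $M$. Here $\tilde g=0$ and $\lambda,\mu,g^0$ are constants, and $f(\pi_0(w(x)))$ is constant on $M$ (it depends only on $|w|^2=\tanh^2\frac{r_{-1}}2$), so $\Phi^0$ is constant on $M$ and $L_T\Phi^0=0$; by condition (N1) and the fact that $\Phi^0$ cannot be a first eigenfunction of $L_T$, we must have $\Phi^0\equiv 0$ on $M$. Writing out $\Phi^0=\gamma_g(\pi_0(w(x)))^0=0$ gives a relation that forces $g^0$ to a specific value in terms of $r_{-1}$ (for the Euclidean analogue this was $g^0=-\frac{r_0^2-1}{1+r_0^2}$; here it should come out so that the surviving terms telescope). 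Plugging this value of $g^0$ back into the expression for $\bar\nabla_\nu\rho$ obtained in the first step should collapse it, after simplification using hyperbolic identities such as $\frac{2\tanh\frac r2}{1+\tanh^2\frac r2}=\tanh r$ and $\frac{1-\tanh^2\frac r2}{1+\tanh^2\frac r2}=\frac1{\cosh r}$, to exactly $\coth r_{-1}=k$.

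The main obstacle I anticipate is purely computational bookkeeping: keeping track of the triple composition $\gamma_g\circ\pi_0\circ\pi$ and the various $|w|$ versus $|\tilde x|$ versus $x^0$ substitutions without sign errors, and verifying that the value of $g^0$ forced by $\Phi^0=0$ is consistent with $g\in B^{N+1}$ (i.e.\ $|g^0|<1$) — in the hyperbolic case one should check that $\cosh r_{-1}>1$ indeed yields an admissible $g^0$, analogous to the check $r_1\neq\pi/2$ in Lemma~\ref{lem5-1}. Once $\bar\nabla_\nu\rho|_{\Sigma_{-1}}=k$ is established, the rest of Case~3 runs as in Cases~1 and 2: condition (N3) gives $|H_T|^2=k^2(\tr T)^2$, hence the right-hand side of \eqref{eq_thm1} equals $(c+k^2)\tr T=c'\tr T$ where $c'$ is the sectional curvature of $\Sigma_{-1}$, equality forces $\lambda_2^{L_T}=c'\tr T$, and \eqref{eq_rad1} then identifies the geodesic radius as $r_{-1}=\arsinh r_0$ with $r_0=\left(\frac{\tr T}{\lambda_2^{L_T}}\right)^{1/2}$.
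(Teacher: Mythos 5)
Your proposal is correct and follows essentially the same route as the paper: compute $\bar\nabla_\nu\rho=-\p_r\rho$ from \eqref{eq_rho-1} using $|w|^2=\tanh^2\frac r2$ (which simplifies via the identities you list to $\p_r\big(\ln\cosh r+\ln(1-\tfrac{g^0}{\cosh r})\big)=\tanh r\big(1+\tfrac{g^0}{\cosh r-g^0}\big)$), then force $\Phi^0\equiv 0$ via condition (N1) to get $g^0=\tfrac{1}{\cosh r_{-1}}$, which collapses the expression to $\coth r_{-1}=k$. This is exactly the paper's argument.
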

\begin{proof}
	As we choose $\nu=-\p_r$  on $\Sigma_{-1}$,  the principal curvature  $k=\frac{\cosh r_{-1}}{\sinh r_{-1}}.$
Note that the function $\rho$ only  depends on $r$.  By using  \eqref{eq_f}, \eqref{eq_pi0}, \eqref{eq_pi1} and \eqref{eq_rho-1}, we have
	\begin{align*}	 \bar{\nabla}_\nu\rho=&\p_r\Bigg(\ln\big(1+\tanh^2\frac{r}{2}\big)-\ln\big(1-\tanh^2\frac{r}{2}\big)+\ln\Big(1+\frac{\tanh^2\frac{r}{2}-1}{1+\tanh^2\frac{r}{2}}g^0\Big)\Bigg)\\
	=&\p_r\Bigg(\ln(\cosh  r)+\ln\Big(1-\frac{g^0}{\cosh r}\Big)\Bigg)=\tanh r\Big(1+\frac{g^0}{\cosh r-g^0}\Big).
	\end{align*}
	By an argument similar to that in the proof of Lemma \eqref{lem5-1}, we deduce from $L_T\Phi^0=0$ that $\Phi^0=0$, from which we obtain that $g^0=\frac{1}{\cosh r_{-1}}$. Hence
	$$
	\left.\bar{\nabla}_\nu\rho\right|_{\Sigma_{-1}}=\frac{\cosh r_{-1}}{\sinh r_{-1}}=k.
	$$
\end{proof}

  We denote the sectional curvature of $\Sigma_{-1}$ by $c'$, then from Gauss equation, we have that $c'=-1+k^2$.
  If $g=(0,g^0)$, then  by using Lemma \ref{lem5--1} and condition (N3), we derive that $|H_T|^2=k^2 (\tr T)^2$, as we have proved that $\tr T$ is constant, we get that the right hand side of $\eqref{eq_thm1}$ equals $(-1+k^2)\tr T=c' \tr T$. Since that the equality in \eqref{eq_thm1} is attained, we derive that $\lambda_2^{L_T}=c' \tr T$, hence $c'=\frac{\lambda_2^{L_T}}{\tr T}$, then by using
the relation between the geodesic radius $r_c$ and the sectional curvature $c'$ of $\Sigma_c$ (see  \eqref{eq_rad1}), we obtain that the geodesic radius $r_{-1}$ of $\Sigma_{-1}$ is given by  $r_{-1}=\arsinh r_0$ with $r_0=\left(\frac{\tr T}{\lambda_2^{L_T}}\right)^{1/2}$.

If $g\neq(0,g^0)$, then from \eqref{eq_sec-1} and \eqref{eq_sig-1}, we know that $M$ lies in an $(N-2)$-sphere $\Sigma_{-1}'$, which is the intersection of two $(N-1)$-spheres. We note that an $(N-2)$-sphere $\Sigma_{-1}'$ can be regarded as a hypersphere in $\mathbb{H}^{N-1}(-1)$, where $\mathbb{H}^{N-1}(-1)$ is totally geodesic in $\mathbb{H}^{N}(-1)$. Therefore, we reduce the dimensions $N$ and $p$ to $N-1$ and $p-1$ respectively, so we can repeat the  discussions above up to finite times  and finally obtain that $M$ is a $T$-minimal submanifold in a geodesic sphere $\Sigma_{-1}$ of $\mathbb{H}^{N}(-1)$ with geodesic radius $r_{-1}=\arsinh r_0$, where $r_0=\left(\frac{\tr T}{\lambda_2^{L_T}}\right)^{1/2}$.
This completes the proof of the necessary condition for the equality in \eqref{eq_thm1} in the case $c=-1$.
\subsection{A generalization of Theorem \ref{thm1.1}}
In this section,  we generalize Theorem \ref{thm1.1} to Schr{\"o}dinger-type operators. We prove the following result.
\begin{thm}\label{thm3.1}
		Let $M$ be an $n(>2)$-dimensional  closed orientable submanifold in an $(n+p)$-dimensional space form $\R$. Let $T$ be a symmetric, divergence-free $(1,1)$-tensor on $M$. Assume that $T$ is positive definite, and $T'=(\tr T) I-2T$ is semi-positive definite.  For any function  $q \in C^{\infty}(M)$,   consider the operator  $L_{T,q}=L_T+q$, we have the following sharp estimate for the second eigenvalue of $L_{T,q}$:
	\begin{align}\label{eqthm3.14}
	\la_2^{L_{T,q}}\leq\frac{1}{V(M)}\int_M\left(c\tr T+\frac{|H_T|^2}{\tr T}\right)\,dv_M+\bar{q},
	\end{align}
		where $\bar{q}=\frac{1}{V(M)}\int_M q\,dv_M$.	
	The equality in \eqref{eq_thm1} holds  if the following two conditions hold:
	
	(1) $M$ is $T$-minimal in a geodesic sphere $\Sigma_c$ with constant curvature $c'$ of $\R$;
	
	(2) $c'\tr T+ q=\lambda_2^{L_{T,q}}$ is constant.
	
	Moreover, if $T'$ is positive definite and $H_T$ is not identically zero, then the equality  in \eqref{eq_thm1} holds if and only if  the conditions (1) and (2) hold.
\end{thm}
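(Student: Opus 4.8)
The plan is to run the proof of Theorem~\ref{thm1.1} essentially verbatim; the one genuinely new ingredient is that the Li--Yau conformal balancing of Lemma~\ref{lem3.1} must be performed against the measure weighted by a first eigenfunction of $L_{T,q}$. Since $T$ is positive definite, $L_{T,q}=L_T+q$ is self-adjoint and elliptic with discrete spectrum $\lambda_1^{L_{T,q}}\le\lambda_2^{L_{T,q}}\le\cdots$; let $u_1>0$ be a first eigenfunction normalized so that $\int_M u_1^2\,dv_M=1$. Applying the argument behind Lemma~\ref{lem3.1} to the positive measure $u_1\,dv_M$ (which has no atoms, since $x$ is an immersion and $u_1>0$) produces a regular conformal map $\Gamma:\R\to\S$ such that $\Phi=\Gamma\circ x$ satisfies $\int_M\Phi^A u_1\,dv_M=0$ for every $A$; when $q$ is constant this is exactly Lemma~\ref{lem3.1}. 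Each $\Phi^A$ is then $L^2$-orthogonal to $u_1$, so the min--max principle gives $\lambda_2^{L_{T,q}}\int_M(\Phi^A)^2\le\int_M\Phi^A L_{T,q}\Phi^A$; summing over $A$ and using $\sum_A(\Phi^A)^2=1$ together with \eqref{eq32}, exactly as in \eqref{eigen}, yields
\begin{equation*}
\lambda_2^{L_{T,q}}V(M)\le\int_M\big(e^{2\rho}\tr T+q\big)\,dv_M.
\end{equation*}
I would then invoke Proposition~\ref{prop3.2}, use $\int_M L_T\rho\,dv_M=0$, and discard the nonnegative term $\int_M\tr T\big|(\bar{\nabla}\rho)^\bot-\tfrac{1}{\tr T}H_T\big|^2$ exactly as in \eqref{eq33}--\eqref{eq34}; this gives \eqref{eqthm3.14}, with $\bar q=\frac{1}{V(M)}\int_M q$ appearing automatically on the right.

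For the sufficient direction, assume (1) and (2). If $M$ is $T$-minimal in a geodesic sphere $\Sigma_c$ of constant curvature $c'$ of $\R$, then, with the adapted frame used in the proof of the sufficient condition for Theorem~\ref{thm1.1}, one has $h_{ij}^{n+1}=k\delta_{ij}$ with $k=\sqrt{c'-c}\ge0$ and $\sum_{\beta=n+2}^{n+p}T_{ij}h_{ij}^{\beta}e_{\beta}=0$, so $H_T=(\tr T)k\,e_{n+1}$ and $c\tr T+|H_T|^2/\tr T=(c+k^2)\tr T=c'\tr T$. Hence the right-hand side of \eqref{eqthm3.14} equals $\frac{1}{V(M)}\int_M(c'\tr T+q)\,dv_M$, which by (2) equals $\frac{1}{V(M)}\int_M\lambda_2^{L_{T,q}}\,dv_M=\lambda_2^{L_{T,q}}$; so equality holds. (Consistently, arguing as in Proposition~\ref{lem5.1} one gets $L_{T,q}x=(c'\tr T+q)x=\lambda_2^{L_{T,q}}x$ for the position vector of $\Sigma_c$, which explains why condition (2) is the natural one.)

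For the necessary direction, assume $T'$ is positive definite, $H_T\not\equiv0$, and equality holds in \eqref{eqthm3.14}. Tracing the chain of inequalities backwards as in Theorem~\ref{thm1.1} forces the analogues of (N1)--(N3): $L_{T,q}\Phi^A=\lambda_2^{L_{T,q}}\Phi^A$ for all $A$; $T'(\nabla\rho,\nabla\rho)\equiv0$, hence $\nabla\rho\equiv0$ (by positive definiteness of $T'$) and $\rho$ is constant on $M$; and $H_T=(\tr T)(\bar{\nabla}\rho)^\bot$ on $M$. Applying $L_T$ to $\sum_A(\Phi^A)^2=1$ and using the new (N1), exactly as in the derivation of \eqref{trtc1}, gives the pointwise identity $e^{2\rho}\tr T+q=\lambda_2^{L_{T,q}}$ on $M$, so $e^{2\rho}$ equals a positive constant $e^{2\rho_0}$ there. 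From this point the analysis is identical to the necessary-condition part of Theorem~\ref{thm1.1}: writing $\Phi$ explicitly in each case $c=1,0,-1$ via \eqref{eq_gamma}, the condition $\nabla\rho\equiv 0$ shows $M$ lies in a level set of $\rho$, i.e.\ in a geodesic sphere $\Sigma_c$ of $\R$; the crucial fact $\Phi^0\equiv0$ on $M$ still holds, because $\Phi^0$ is constant on $M$ so $L_{T,q}\Phi^0=q\Phi^0$, whereas if $\Phi^0\not\equiv0$ then (N1) forces $q\equiv\lambda_2^{L_{T,q}}$, giving $\lambda_2^{L_{T,q}}=\lambda_2^{L_T}+\lambda_2^{L_{T,q}}$ and contradicting $\lambda_2^{L_T}>0$; then Lemmas~\ref{lem5-1}, \ref{lem5-0}, \ref{lem5--1} give $\bar{\nabla}_\nu\rho|_{\Sigma_c}=k$, where $k$ is the principal curvature of $\Sigma_c$ in $\R$. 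Since $M\subset\Sigma_c$ and $\bar{\nabla}\rho$ is normal to $\Sigma_c$, we have $(\bar{\nabla}\rho)^\bot=k\nu$ along $M$, so $H_T=(\tr T)k\nu$, i.e.\ $M$ is $T$-minimal in $\Sigma_c$ (this is (1)) and $|H_T|^2/\tr T=k^2\tr T$. Combining $c\tr T+|H_T|^2/\tr T=c'\tr T$ (Gauss equation, $c'=c+k^2$) with equality in \eqref{eqthm3.14} gives $\int_M(c'\tr T+q-\lambda_2^{L_{T,q}})\,dv_M=0$; subtracting the pointwise identity $e^{2\rho_0}\tr T+q=\lambda_2^{L_{T,q}}$ yields $\int_M(c'-e^{2\rho_0})\tr T\,dv_M=0$, and since $\tr T>0$ and $c'-e^{2\rho_0}$ is constant, $c'=e^{2\rho_0}$. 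Therefore $c'\tr T+q=\lambda_2^{L_{T,q}}$ on $M$, which is exactly (2).

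The main obstacle is, as in Theorem~\ref{thm1.1}, the equality analysis in arbitrary codimension, which is here inherited wholesale (the explicit conformal maps and Lemmas~\ref{lem5-1}--\ref{lem5--1}). The only points where the non-constant potential $q$ requires genuine care are: (a) setting up the weighted balancing so that $\Phi^A\perp u_1$ rather than $\Phi^A\perp 1$; (b) replacing the identity $e^{2\rho}\tr T=\lambda_2^{L_T}$ of the zero-potential case by $e^{2\rho}\tr T+q=\lambda_2^{L_{T,q}}$ and extracting $c'=e^{2\rho_0}$ from it; and (c) excluding $\Phi^0\not\equiv0$ via the strict positivity of $\lambda_2^{L_T}$. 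I do not expect any of these to cause real difficulty.
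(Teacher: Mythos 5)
Your proposal is correct and follows essentially the same route as the paper: balancing $\Phi$ against the first eigenfunction of $L_{T,q}$ (the paper delegates this to \cite{ESI92a}), reusing Proposition \ref{prop3.2} and the chain \eqref{eq33}--\eqref{eq34}, replacing $e^{2\rho}\tr T=\lambda_2^{L_T}$ by the pointwise identity $e^{2\rho}\tr T+q=\lambda_2^{L_{T,q}}$, and excluding $\Phi^0\not\equiv 0$ via the simplicity of $\lambda_1$. The only (harmless) deviation is that you extract $c'=e^{2\rho}$ by an integral comparison against the equality case rather than reading it off from $g^0=-\cos r_1$ as the paper does.
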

\begin{proof}
Under the assumptions in Theorem \ref{thm1.1},   there exists a regular conformal map $\Gamma: \R\to \S\subset\mathbb{R}^{n+p+1}$ such that the components of the immersion $\Phi=\Gamma\circ x=(\Phi^1, \cdots, \Phi^{n+p+1})$ are $L^2$-orthogonal to the first eigenfunction of $L_{T,q}$ (cf. \cite{ESI92a}], then by using similar argument to that in Sections 3.1, it is not hard to prove that
	\begin{equation*}
		\la_2^{L_{T,q}}=\frac{1}{V(M)}\int_Me^{2\rho}\tr T\,dv_M+\bar{q}
		\leq\int_M \left(c\tr T+\frac{|H_T|^2}{\tr T}\right)\,dv_M+\bar{q}.
	\end{equation*}
	
If the conditions (1) and (2) hold, then  by analogous argument to that in Section 3.2,  we obtain that the equality in \eqref{eqthm3.14} is attained.
	
Conversely, if $T'$ is positive definite, $H_T$ is not identically zero and the equality  in \eqref{eq_thm1} holds,  then by modifying the  arguments in Section 3.3 slightly, we can obtain that the conditions (1) and (2) hold.  We briefly explain the difference.
First, the condition (N1) will be replaced by

(N1') $L_{T,q}\Phi^A=\lambda_2^{L_{T,q}}\Phi^A$ for all $0\leq A\leq N$.
 The other two conditions (N2) and (N3) are the same. Then by using the condition (N1') and the fact that $\sum_{A=0}^N(\Phi^A)^2=1$, we obtain that
\begin{equation}\label{trtc2}
\begin{aligned}
q&=\sum_{A=0}^N\Big(\frac{1}{2}L_{T}+q\Big)(\Phi^A)^2=\sum_{A=0}^N\Phi^AL_{T,q}\Phi^A-\sum_{A,i,j}T_{ij}\Phi^A_i\Phi^A_j\\
&=\lambda_2^{L_{T,q}}-e^{2\rho}\tr T,
\end{aligned}
\end{equation}
where we used condition (N1') and \eqref{eq32} in the last equality.
From \eqref{trtc2}, we immediately get that $e^{2\rho}\tr T+q=\lambda_2^{L_{T,q}}$ is a positive constant. In order to prove the condition (2), it suffices to show that $e^{2\rho}=c'$. We explain how to show this in the case $c=1$. From the expression of $\rho$ and conditions (N2) and (N3), we can first also show  that $M$ is $T$-minimal in a geodesic sphere $\Sigma_{1}$, which is the condition (1).  By adjusting the proof of Lemma \ref{lem5-1}, we have that $\Phi^0$ is constant on $M$ and $L_{T,q}\Phi^0=q\Phi^0=\lambda_2^{L_{T,q}}\Phi^0$. If $\Phi_0\neq 0$, then  $q=\lambda_2^{L_{T,q}}$ is constant. But we know that $\lambda_1^{L_{T,q}}=q$ if $q$ is constant and $\lambda_1$ is simple, which is a contradiction. Hence, $\Phi_0$ much be $0$, then we get that $g^0=-x^0=-\cos r_1$ on $M$, from this we derive that $e^{2\rho}=1/\sin^2 r_1=c'$.
The case $c=0$ and the case $c=-1$
can be proved by modifying the arguments in Section 3.3 in a similar way.
\end{proof}

\begin{rem}
If we take $L_{T}=-\Delta$, then we can obtain Theorem 2.1 in \cite{ESI00} by applying Theorem \ref{thm3.1}.
When $p=1$, $L_T=L_r$, which is the linearized operators for the first variation of the $(r+1)$-mean curvature for hypersurfaces in a space form,
the Jacobi operator for the corresponding  variational problem is a Schr{\"o}dinger-type operator associated with $L_r$. In \cite{LW12}, the estimate of the second eigenvalue of the Jacobi
operator $J_s$ for hypersurfaces with constant scalar curvature in a sphere was applied to give a new proof of the stability result of \cite{AdCC93}.
 We expect that our Theorem \ref{thm3.1} can be applied to prove some stability result for some variational problems in higher codimension cases in future.
\end{rem}

\section{Application of Theorem \ref{thm1.1} to the $L_r$ operator}

In this section,  we prove Theorem \ref{cor3.15} by applying Theorem \ref{thm1.1} to the $L_r$ operator for the case $p=1$ or the case $p>1$ and $r$ is even. We also give some examples which satisfy the assumptions in Theorem \ref{cor3.15} and attain the equality in \eqref{eq3.47}. These examples show that our estimate is really sharp.

\subsection{Proof of Theorem \ref{cor3.15}}
First of all, when $n=2$, as we assume that $r\leq n-2$,  $r$ must be $0$, then $L_0=-\Delta$. Although we assume that $n>2$ in Theorem \ref{thm1.1},
the conclusion is still true for $n=2$ and $L_0=-\Delta$. This can be seen from Remark \ref{remn2} and the proof of Theorem \ref{thm1.1}.
Actually, this case corresponds to the ``Reilly in equality'' which was proved in \cite{Rei77} and \cite{ESI92}.

Now, we assume that $n>2$, we only need to check that under the assumption of Theorem \ref{cor3.15}, the assumptions  in \ref{thm1.1} are satisfied.
If $p>1$ and $r$ is even, or $p=1$, the tensor $T_r$ is a symmetric and divergence-free $(1,1)$-tensor (cf. Lemma 2.1 in \cite{Gro02} or Lemma 3.1 in \cite{CL07}).
	From Lemma \ref{lem2.1}, we have the following relations:
	\begin{align}
	&\tr (T_r)=(n-r) S_r,\label{eq3.45}\\
	&T'_r=\tr (T_r) I - 2 T_{r}=(n-r-2) S_r+(\sum_{k,\alpha}T_{r-1\, kj}^\alpha h_{ki}^\alpha).\nonumber
	\end{align}
	So the assumption that ``$T_{r}$ is positive definite and  $(\sum_{k,\alpha}T_{r-1\, kj}^\alpha h_{ki}^\alpha)$ is semi-positive definite'' implies that $S_r>0$ and $T'_{r}$ is semi-positive definite. Moreover, $T'_r$ is positive definite if $r<n-2$. Hence, $T_r$ satisfies the conditions in Theorem \ref{thm1.1}, then by applying Theorem \ref{thm1.1}, we complete the proof by using \eqref{eq2.23}, \eqref{eq3.45} and the relation $H_{T_r}=(r+1)\mathbf{S}_{r+1}$.

\subsection{Some Examples}
We give some examples in higher codimension case ($p>1$). For simplicity,  we only consider the $L_2$ operator ($r=2$). We always assume that $n\geq 4$ in the following examples since $2=r\leq n-2$.

\begin{ex}[Torus in Euclidean space or the hyperbolic space]\label{ex-torus}
	Assume that $c\leq 0, n\geq 4$ and $1\leq m\leq n/2$. Given $0<a<1$,  let
	$$x: M=\mathbb{T}(m,a)=\mathbb{S}^m(a)\times \mathbb{S}^{n-m}(\sqrt{1-a^2})\to \mathbb{S}^{n+1}(1)\subset \mathbb{S}^{n+p-1}(1)\subset  \R$$
	be a torus immersed in $\R$, where $\mathbb{S}^m(a)$ and  $\mathbb{S}^{n-m}(\sqrt{1-a^2})$ denote a sphere with radius $a$ and $\sqrt{1-a^2}$ respectively.
	
\textbf{Claim 1. For the tensor $T_2$ and the operator $L_2$, all the assumptions in Theorem \ref{cor3.15} are satisfied.}
	We denote the position vector of $\mathbb{T}(m,a)$ in  $\mathbb{S}^{n+1}(1)$ by $x=(x_1,x_2)\in \mathbb{S}^m(a)\times \mathbb{S}^{n-m}(\sqrt{1-a^2})$, then the unit normal vector at this point $x$ is given by $e_{n+1}=(\frac{\sqrt{1-a^2}}{a} x_1,-\frac{a}{\sqrt{1-a^2}}x_2)$. The principal curvatures of $x$ are given by
\begin{equation}
k_1=\cdots=k_m=-\frac{\sqrt{1-a^2}}{a},~k_{m+1}=\cdots=k_n=\frac{a}{\sqrt{1-a^2}}.
\end{equation}
The Ricci curvature of $\mathbb{T}(m,a)$ is non-negative and is given by
\begin{equation*}
R_{ii}=\frac{m-1}{a^2} (1\leq i\leq m), \, R_{jj}=\frac{n-m-1}{1-a^2} (m+1\leq j\leq n),
\end{equation*}
so $\Ric\geq (n-1)c I$ as $c\leq 0$, then by using \eqref{eq3.50}, we obtain that $(\sum_{k,\alpha}T_{r-1\, kj}^\alpha h_{ki}^\alpha)$ is semi-positive definite
From \eqref{eq3.50}, \eqref{eq2.26} and Gauss equation \eqref{gauss3}, we know that
\begin{align*}
	T_2=&[S_2+(n-1)c]I-\Ric
	=\Big[\frac{R-n(n-1)c}{2}+(n-1)c\Big]I-\Ric\\
	=&\frac{1}{2}\Big[\frac{m(m-1)}{a^2}+\frac{(n-m)(n-m-1)}{1-a^2}-(n-1)(n-2)c\Big]I-\Ric>0.
\end{align*}

\textbf{Claim 2. The inequality in \eqref{eq3.47} holds .}

The tensor $T_2$ can be diagonalized as $\mbox{diag}(t,\cdots, t, s, \cdots, s)$, where the first $m$ entires are $t$ and the rest are $s$, and
\begin{align*}
t=&\frac{1}{2}\Big[\frac{(m-2)(m-1)}{a^2}+\frac{(n-m)(n-m-1)}{1-a^2}-(n-1)(n-2)c\Big],\\
s=&\frac{1}{2}\Big[\frac{m(m-1)}{a^2}+\frac{(n-m-2)(n-m-1)}{1-a^2}-(n-1)(n-2)c\Big].
\end{align*}
Using the first (non-zero) eigenvalue of Laplacian on a sphere, we have (cf. Example 3.2 in \cite{LW12})
$$\lambda_2^{L_2}=\min\{mt/a^2, (n-m)s/(1-a^2)\}.$$
Without loss of generality, we assume $mt/a^2\leq (n-m)s/(1-a^2)$, which is equivalent to
\begin{equation}\label{eq46}
\frac{(n-m)s}{mt}\geq\frac{1-a^2}{a^2}.
\end{equation}
On the other hand, if we denote $S_3'$ the $3$-mean curvature of $\mathbb{T}(m,a)$ in $ \mathbb{S}^{n+1}(1)$, then we have
\begin{align}
	&\frac{(n-2)\binom{n}{2}}{V(M)}\int_M\frac{cH_2^2+|\mathbf{H}_{3}|^2}{H_2}\,dv_M\nonumber\\
	=&\tr T_2+\frac{(3S_3')^2}{\tr T_2}
	\geq\tr T_2=mt+(n-m)s\label{eq47}\\
	=&mt\Big(1+\frac{(n-m)s}{mt}\Big)\geq mt\Big(1+\frac{1-a^2}{a^2}\Big)=\lambda_2^{L_2},\label{eq48}
\end{align}
where we used $\eqref{eq46}$ in the last inequality.

\textbf{Claim 3. $\mathbb{S}^m(\sqrt{1/2})\times \mathbb{S}^{m}(\sqrt{1/2})\to \mathbb{R}^N$ or $\mathbb{H}^N(-1)$ attains the  equality in \eqref{eq3.47}.}

From \eqref{eq47} and \eqref{eq48}, we know that the equality in \eqref{eq3.47}   holds if and only if
$S_3'=0$ and $\frac{a^2}{1-a^2}=\frac{mt}{(n-m)s}$.

The principal curvatures of $\mathbb{T}(m,a)$ in $\mathbb{S}^{n+1}(1)$ are $\lambda=-\frac{\sqrt{1-a^2}}{a}$ with multiplicity $m$ and $\mu=\frac{a}{\sqrt{1-a^2}}$ with multiplicity $n-m$, so we get $\lambda\mu=-1, S_3'=\binom{m}{3}\lambda^3+\binom{m}{2}\binom{n-m}{1}\lambda^2\mu+\binom{m}{1}\binom{n-m}{2}\lambda\mu^2+\binom{n-m}{3}\mu^3$.

When $n=2m\geq 4$, It is easy to check that $t=s$,  $\mu^2=\frac{a^2}{1-a^2}=\frac{mt}{(n-m)s}=1, S_3'=0$.

\end{ex}

\begin{ex}[Einstein manifolds]\label{ex-Ein}
	Let $M^n$ be an Einstein manifold with scalar curvature $R>n(n-1)c$ immersed in $\R$, then the operator $L_2$ on $M$ satisfies the assumptions in Theorem \ref{cor3.15}.
	In fact, for an Einstein manifold, $\Ric=\frac{R}{n}I$. It is  not hard to verify that $T_2=\Big[\frac{R-n(n-1)c}{2}+(n-1)c\Big]I-\Ric>0$ if and only if $Ric>(n-1)c I$, i.e., $R>n(n-1)c$. By using \eqref{eq3.50}, we obtain that $(\sum_{k,\alpha}T_{r-1\, kj}^\alpha h_{ki}^\alpha)$ is positive definite
	From
	In addition, if we assume that $M$ is minimal in some sphere $\mathbb{S}^{n+l}(a)\subset\R$ ($a$ denotes the radius of the sphere) and has constant Ricci curvature, then $M$ is also $2$-minimal (i.e., the $3$-rd mean curvature vector $\mathbf{S}_{3}'$ of $M$ in $\mathbb{S}^{n+l}(a)$ vanishes) in the sphere (cf. Example 5.6 in  \cite{CL07}), and $T_2=K I, L_2=-K\Delta$ for some constant $K>0$. Hence,  by combining with the Takahashi theorem, we have
	\begin{equation*}
	\frac{(n-2)\binom{n}{2}}{V(M)}\int_McH_2+\frac{|\mathbf{H}_{3}|^2}{H_2}\,dv_M
	=\frac{\tr T_2}{a^2}+\frac{(3|\mathbf{S}_{3}'|)^2}{\tr T_2}
	=\frac{\tr T_2}{a^2}=\frac{nK}{a^2} \geq K\lambda_2^{\Delta}
	=\lambda_2^{L_2}.
	\end{equation*}
	In particular, when $\lambda_2^{L_2}=n/a^2$, the equality in \eqref{eq3.47} is attained.
	
	(1) \textbf{Spheres with radius $a$}.
	We consider an immersion $x: M=\mathbb{S}^n(a)\to \mathbb{S}^{n+p-1}(a)\subset \R$ (we assume that $a<1$ when $c=1$).
	Note that $M$ is totally geodesic in $\mathbb{S}^{n+p-1}(a)$, so it is automatically $2$-minimal. We also have
  \begin{equation*}
	k^2=\frac{1}{a^2}-c,\quad H_2=k^2,\quad |\mathbf{H}_3|=k^3.  \quad L_2=-\frac{(n-2)(n-1)}{2}k^2\Delta,~\lambda_2^{\Delta}=n/a^2,
	\end{equation*}
	where $k>0$ is the principal curvature of $\mathbb{S}^{n+p-1}(a)$ in $\R$, hence we get
\begin{equation*}
\lambda_2^{L_2}=\frac{n(n-2)(n-1)}{2a^2}k^2= \frac{(n-2)\binom{n}{2}}{V(M)}\int_M\frac{cH_2^2+|\mathbf{H}_{3}|^2}{H_2}\,dv_M.
\end{equation*}
In fact, for spheres, the equality in \eqref{eq3.47} holds for each $r$.
	
	(2) \textbf{Projective spaces.}
	Let $\mathbb{F}$ denote the field $\mathbb{R}$ of real numbers, the field $\mathbb{C}$ of  complex numbers of the filed $\mathbb{Q}$ of quaternions, and $M=P^m(\mathbb{F})$ be the projective space over $\mathbb{F}$, then the real dimension of $M$ equals $n=m\cdot d_{\mathbb{F}}$, where
	\begin{numcases}{d_{\mathbb{F}}=}
	1, & $\mathbb{F}=\mathbb{R}$,\nonumber\\
		2, & $\mathbb{F}=\mathbb{C}$,\nonumber\\
			4, & $\mathbb{F}=\mathbb{Q}$.\nonumber
	\end{numcases}

	Let $\phi_1: P^m(\mathbb{F})\to \mathbb{S}^{N-1}(\sqrt{\frac{m}{2(m+1)}}) $ be the first standard minimal immersion into a unit sphere (see \cite{Ros83} or Chapter 4.6 in \cite{Che84} for the details), and $i: \mathbb{S}^{N-1}(\sqrt{\frac{m}{2(m+1)}})\to \mathbb{R}^{N}$ be the inclusion map, where $N=\frac{m(m+1)d}{2}+m$. We consider the immersion $x=i\circ\phi_1: P^m(\mathbb{F})\to \mathbb{R}^{N}$, then the curvature of $M$ and the 1st nonzero eigenvalue of Laplacian on $M$ are listed as follows (cf. \cite{Che84,Ros83}).
	
	\begin{tabular}{|c|c|c|c|c|}
	\hline
	$M$ & dim $M$ & sectional curvature  & Ricci curvature & $\lambda_2^{\Delta}$(1st nonzero eigenvalue of $\Delta $) \\
	\hline
	$P^m(\mathbb{R})$ & $m$ & $1$  & $m-1$ &   $2(m+1)$\\
	\hline
	$P^m(\mathbb{C})$ & $2m$ & $[1,4]$ & $2(m+1)$ & $4(m+1)$\\
	\hline
	$P^m(\mathbb{Q})$ & $4m$ & $[1,4]$ & $4(m+2)$ &  $8(m+1)$ \\
	\hline
\end{tabular}
	Since $P^m(\mathbb{F})$ has positive constant Ricci curvature, is minimally immersed in a unit sphere and satisfies that $\lambda_2^\Delta=\text{dim } M\Big/(\sqrt{\frac{m}{2(m+1)}})^2$, we obtain that the equality in \eqref{eq3.47} is attained.
\end{ex}

\section{Application to the operator $L$ defined by \eqref{L1}}

In this section, we prove Theorem \ref{thm1.2} by applying Theorem \ref{thm1.1} to the tensor $T$ defined by \eqref{T1} and the operator defined by \eqref{L1}. Under the assumption $H_2>0$, we have that $n^2H^2>S>0$, so we have that the mean curvature vector $\mathbf{H}$ is nowhere zero. We choose $e_{n+1}=\frac{\mathbf{H}}{H}$ as in Section 1.

First, we prove the following algebraic lemma which will be used in the proof for the case $n=4$.
\begin{lem}\label{lem_alg}
	Given two positive numbers $a>0,b>0$ which satisfy $9a^2>24b$, if we
	consider a function $f: \mathbb{R}^4\to \mathbb{R}$ defined by
	$$f(x)=3x_1+x_2+x_3+x_4,~\forall x=(x_1,x_2,x_3,x_4)\in\mathbb{R}^4,$$
	then
	$f(x)$ is always positive on the set $$K_{a,b}=\{x=(x_1,x_2,x_3,x_4)\in \mathbb{R}^4| \sum_{1\leq i\leq 4}x_i=a, \sum_{1\leq i<j\leq 4}x_ix_j=b\}.$$
\end{lem}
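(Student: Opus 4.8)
The plan is to collapse the four variables to a single one. On $K_{a,b}$ the first constraint gives $x_2+x_3+x_4=a-x_1$, hence
\[
f(x)=3x_1+(x_2+x_3+x_4)=2x_1+a .
\]
So the assertion ``$f>0$ on $K_{a,b}$'' is equivalent to the one-variable statement $x_1>-a/2$ for every $x\in K_{a,b}$ (by symmetry the same bound then holds for each coordinate, but only the bound on $x_1$ is needed), and it suffices to find a lower bound for the first coordinate of a point of $K_{a,b}$.

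To obtain such a bound I would first convert the two symmetric-function constraints into a bound on $\sum x_i^2$: from $a^2=(\sum x_i)^2=\sum x_i^2+2\sum_{i<j}x_ix_j=\sum x_i^2+2b$ we get $\sum_{i=1}^4 x_i^2=a^2-2b$ on $K_{a,b}$. Fixing $x\in K_{a,b}$ and writing $p=x_2+x_3+x_4=a-x_1$, $q=x_2^2+x_3^2+x_4^2=a^2-2b-x_1^2$, the elementary inequality $q\ge p^2/3$ (Cauchy--Schwarz for the three numbers $x_2,x_3,x_4$) gives $a^2-2b-x_1^2\ge (a-x_1)^2/3$, which after clearing denominators becomes
\[
2x_1^2-ax_1+(3b-a^2)\le 0 .
\]
Its discriminant is $a^2-8(3b-a^2)=9a^2-24b>0$ by hypothesis, so this quadratic has two real roots and $x_1$ lies between them; in particular $x_1\ge \bigl(a-\sqrt{9a^2-24b}\bigr)/4$.

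It then remains to note that this lower bound already beats $-a/2$: the inequality $\bigl(a-\sqrt{9a^2-24b}\bigr)/4>-a/2$ is equivalent to $3a>\sqrt{9a^2-24b}$, and since the left-hand side is positive we may square to obtain $9a^2>9a^2-24b$, i.e. $24b>0$, which holds because $b>0$. Hence $x_1>-a/2$ and $f(x)=2x_1+a>0$ on all of $K_{a,b}$. I do not expect any real obstacle here; the two small points to watch are that the hypothesis $9a^2>24b$ is used only to keep the discriminant positive (equivalently, to make $K_{a,b}$ the nonempty $2$-sphere $\{\sum x_i=a\}\cap\{\sum x_i^2=a^2-2b\}$), while the strict positivity of $f$ comes precisely from $b>0$; and that the Cauchy--Schwarz inequality is invoked only as a necessary condition for the solvability of the subsystem in $x_2,x_3,x_4$.

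An alternative, more geometric, route leads to the same place: $K_{a,b}$ is the round $2$-sphere with centre $(\tfrac a4,\tfrac a4,\tfrac a4,\tfrac a4)$ and radius $\sqrt{\tfrac{3a^2}{4}-2b}$ inside the hyperplane $\{\sum x_i=a\}$; splitting the vector $(3,1,1,1)$ into its parts parallel and orthogonal to that hyperplane (the orthogonal part having length $\sqrt3$) and minimising the linear functional $f$ over the sphere gives $\min_{K_{a,b}}f=\tfrac{3a}{2}-\sqrt3\,\sqrt{\tfrac{3a^2}{4}-2b}$, which is positive exactly when $6b>0$.
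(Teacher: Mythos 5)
Your proof is correct, and it takes a genuinely different route from the paper's. The paper treats this as a constrained optimization problem: it notes $K_{a,b}$ is compact (since $\sum x_i^2=a^2-2b$ bounds it), applies Lagrange multipliers, shows any critical point must have $x_2=x_3=x_4$, solves the resulting quadratic $6t^2-3at+b=0$, and evaluates $f$ at the critical points to get $\tfrac{3a\mp\sqrt{9a^2-24b}}{2}>0$. You instead reduce $f$ to $2x_1+a$ via the first constraint and then bound $x_1$ from below directly: the Cauchy--Schwarz inequality $x_2^2+x_3^2+x_4^2\ge (x_2+x_3+x_4)^2/3$ forces $2x_1^2-ax_1+(3b-a^2)\le 0$, hence $x_1\ge\bigl(a-\sqrt{9a^2-24b}\bigr)/4>-a/2$ because $b>0$. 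Both arguments land on the same sharp minimum $\tfrac{3a-\sqrt{9a^2-24b}}{2}$ (your geometric remark makes this explicit, since $\sqrt3\sqrt{\tfrac{3a^2}{4}-2b}=\tfrac12\sqrt{9a^2-24b}$). What your approach buys: it is entirely elementary, it never needs to establish that the minimum is attained, and it sidesteps the regularity hypothesis implicit in the Lagrange-multiplier method (the two constraint gradients $(1,1,1,1)$ and $(a-x_i)_i$ are independent on $K_{a,b}$ only because $9a^2>24b$ rules out the fully diagonal point --- a detail the paper does not address). What the paper's approach buys is a cleaner identification of where the extrema actually sit, namely at points with three equal coordinates, which your argument recovers only implicitly through the equality case of Cauchy--Schwarz.
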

\begin{proof}
	Note that if $x\in K_{a,b}$, then $|x|^2=a^2-2b>0$, so $K_{a,b}$ is bounded. Obviously $K_{a,b}$ is a closed set, hence $K_{a,b}$ is a compact set.
	We will prove that the minimum of $f$ in $K_{a,b}$ is positive. By using the method of Lagrange multipliers,  we consider the function
	$$F(x,\lambda,\mu)=f(x)-\lambda(\sum_{i=1}^4x_i-a)-\mu(\sum_{1\leq i<j\leq 4}^nx_ix_j-b).$$
	The necessary condition for an extremum of $F(x,\lambda,\mu)$ is given by
	\begin{equation*}\left\{\begin{aligned}
	\frac{\p F}{\p x_1}=&3-\lambda-\mu(x_2+x_3+x_4)=0,\\
	\frac{\p F}{\p x_2}=&1-\lambda-\mu(x_1+x_3+x_4)=0,\\
	\frac{\p F}{\p x_3}=&1-\lambda-\mu(x_1+x_2+x_4)=0,\\
	\frac{\p F}{\p x_4}=&1-\lambda-\mu(x_1+x_2+x_3)=0,
	\end{aligned}\right.
	\end{equation*}
which implies that
\begin{equation*}
2+\mu(x_1-x_2)=\mu(x_2-x_3)=\mu(x_2-x_4)=\mu(x_3-x_4)=0,
\end{equation*}
so we get $\mu\neq 0$ and $x_2=x_3=x_4$.
Let $x_1=s, x_2=x_3=x_4=t$, as $x\in K_{a,b}$, we have $s+3t=a, 3st+3t^2=b$, so we get  $6t^2-3at+b=0$.
We can solve out
\begin{equation*}
t=\frac{3a\pm \sqrt{9a^2-24b}}{12}.
\end{equation*}
Note that $s=a-3t$, we get
\begin{equation*}
f(x)=3(s+t)=\frac{3a\mp\sqrt{9a^2-24b}}{2}>0.
\end{equation*}
 Hence, we derive that the minimum of  $f$ in $K_{a,b}$ equals $\frac{3a-\sqrt{9a^2-24b}}{2}$ which is positive.
This completes the proof.
\end{proof}

\noindent \textbf{Proof of Theorem \ref{thm1.2}:}
	From the definitions of  $T$ in \eqref{T1} and  $L$ in \eqref{L1}, we have $T_{ij}=(nH\delta_{ij}-h_{ij}^{n+1})$, then
	\begin{equation}\label{eq5.6}
		\tr T=n(n-1)H,
	\end{equation}
	\begin{align}
		 H_T&=\sum_{i,j,\alpha}(nH\delta_{ij}-h_{ij}^{n+1})h_{ij}^{\alpha}e_{\alpha}=(n^2H^2-\sum_{i,j}(h_{ij}^{n+1})^2)e_{n+1}-\sum_{\alpha\geq n+2}\Big(\sum_{i,j}h_{ij}^{n+1}h_{ij}^{\alpha}\Big)e_{\alpha} \nonumber\\
		&=\Big(n(n-1)H_2+\sum_{\alpha\geq n+2}(h_{ij}^{\alpha})^2\Big)e_{n+1}-\sum_{\alpha\geq n+2}\Big(\sum_{i,j}h_{ij}^{n+1}h_{ij}^{\alpha}\Big)e_{\alpha}.\label{eq5.7}
	\end{align}
	In order to apply Theorem \ref{thm1.1}, we verify that $T$ and $T'=(\tr T)I-2T$ are both positive definite.
	
	We choose orthonormal frame $\{e_1,\cdots, e_n\}$
	 such that $(h_{ij}^{n+1})=\mbox{diag}(k_1,\cdots, k_n)$ is diagonalized. $H_2>0$ implies that
	 \begin{equation*}
0<  n(n-1)H_2=(nH)^2-S\leq (nH)^2-\sum_{i}(k_i)^2
\leq  (nH)^2-(k_i)^2,
	 \end{equation*}
	 So $nH>|k_i|$ for each $i$, which implies that $T$ is positive definite.
On the other hand, the principal curvatures of $T'$ are given by
$T'_{ii}=n(n-1)H-2(nH-k_i)=n(n-3)H+2k_i,~i=1,\cdots,n$.
Assume that $k_1\leq\cdots\leq k_n$ without loss of generality, then it is sufficient to show that $n(n-3)H+2k_1>0$.

When $n\geq 5$, since $nH>|k_1|$, we have
$n(n-3)H+2k_1\geq 2(nH-|k_1|)>0.$

When $n=4$, we need to prove that $4H+2k_1>0$. Set $x_i=k_i (i=1,2,3,4)$ in Lemma \ref{lem_alg}, then by using the Newton-Maclaurin inequality, $9a^2-24b=144(H^2-H_2)>0$. Hence by applying Lemma \ref{lem_alg}, we get  $4H+2k_1=3k_1+k_2+k_3+k_4>0$

Now we can apply Theorem \ref{thm1.1}.
By substituting \eqref{eq5.6} and \eqref{eq5.7} into \eqref{eq_thm1}, we obtain  the inequality \eqref{eqthm}.

If the equality in \eqref{eqthm} holds, we have the following conclusions: $(1)$ $\tr T=n(n-1)H$ is constant; $(2)$ $M$ is $T$-minimal in a geodesic sphere $\Sigma_c$ of $\R$, where the geodesic radius $r_c$ of $\Sigma_c$ is given by \begin{equation*}
r_0=\left(\frac{\tr T}{\lambda_2^{L_T}}\right)^{1/2},\quad r_1=\arcsin r_0,\quad r_{-1}=\arsinh r_0.
\end{equation*}

In the following, we prove that $M$ is not only $T$-minimal in $\Sigma_c$, but also minimal.
Let $\mathbf{H}'$ be the mean curvature vector of $M$ immersed in $\Sigma_c$, denote $H'=|\mathbf{H}'|$. We will show that $H'=0$.
Suppose that $H'\neq 0$, as $e_{n+1}$ is parallel with $\mathbf{H}$, it is obvious that $e_{n+1}\in\text{Span}\{\nu,\mathbf{H}'\}$, where $\nu$ is the unit normal of $\Sigma_c$ in $\R$.  We can choose the normal frame $\{\tilde{e}_{n+1},\cdots,\tilde{e}_{n+p}\}$ on $M$ such that $\tilde{e}_{n+1}=\nu, \tilde{e}_{n+2}=\frac{\mathbf{H}'}{H'}, \tilde{e}_\beta=e_{\beta} (\beta\geq n+3)$.
Then $\tilde{h}_{ij}^{n+1}=\delta_{ij}k$
and $H^2=H'^2+k^2$, where $k$ is the principal curvature of $\Sigma_c$ in $\R$.

We assume that
\begin{equation*}
\begin{pmatrix}
e_{n+1}\\
e_{n+2}
\end{pmatrix} =
\begin{pmatrix}
\cos \theta& \sin\theta  \\
-\sin\theta & \cos \theta
\end{pmatrix} \begin{pmatrix}
\tilde{e}_{n+1}\\
\tilde{e}_{n+2}
\end{pmatrix}.
\end{equation*}
Then we have
\begin{equation}\label{tminsec5}\begin{aligned}
\begin{pmatrix}
h_{ij}^{n+1}\\
h_{ij}^{n+2}
\end{pmatrix} &=
\begin{pmatrix}
\cos \theta& \sin\theta  \\
-\sin\theta & \cos \theta
\end{pmatrix} \begin{pmatrix}
\tilde{h}_{ij}^{n+1}\\
\tilde{h}_{ij}^{n+2}
\end{pmatrix},\\
\tilde{H}^{n+1}&=k=H\cos\theta, \quad
\tilde{H}^{n+2}=H'=H\sin\theta,
\end{aligned}
\end{equation}
where $\tilde{h}_{ij}^{n+1}$ and $\tilde{h}_{ij}^{n+2}$ denote the components of the second fundamental form in the directions of $\tilde{e}_{n+1}$ and $\tilde{e}_{n+2}$ respectively.
By using \eqref{tminsec5} and the $T$-minimality of $M$ in $\Sigma_c$, we get
\begin{equation}\begin{aligned}
0&=\sum_{i,j}T_{ij}\tilde{h}_{ij}^{n+2}=nH\sum_{i}\tilde{h}_{ii}^{n+2}-\sum_{i,j}h_{ij}^{n+1}\tilde{h}_{ij}^{n+2}\\
&=n^2HH'-\sum_{i,j}\cos\theta\tilde{h}_{ij}^{n+1}\tilde{h}_{ij}^{n+2}-\sum_{i,j}\sin\theta\tilde{h}_{ij}^{n+2}\tilde{h}_{ij}^{n+2}\\
&=n^2H^2\sin\theta-nkH'\cos\theta-\sin\theta\sum_{i,j}(\tilde{h}_{ij}^{n+2})^2\\
&=n^2H^2\sin\theta-nk^2\sin\theta-\sin\theta\sum_{i,j}(\tilde{h}_{ij}^{n+2})^2\\
&=n^2H^2\sin\theta-\sin\theta\sum_{i,j}(\tilde{h}_{ij}^{n+1})^2-\sin\theta\sum_{i,j}(\tilde{h}_{ij}^{n+2})^2.\label{eq_45}
\end{aligned}\end{equation}
Since $H>0,~H'>0$, then from \eqref{tminsec5} we get $\sin\theta> 0$, hence \eqref{eq_45} implies that
$0\geq \sin\theta (n^2H^2-S)= n(n-1)H_2\sin\theta $, so we get $H_2\leq 0$, which is a contradiction.

Hence $H'=0$, i.e., $M$ is minimal in $\Sigma_c$. Furthermore, $H=k, L=-(n-1)k\Delta, \lambda_2^{L}=(n-1)k\lambda_2^\Delta, \tr T=n(n-1)H=n(n-1)k$, so from Theorem \ref{thm1.1}, it follows that the geodesic radius of $\Sigma_c$ is $r_c$ is given by
$r_0=\left(\frac{n}{\lambda_2^\Delta}\right)^{1/2}$, $r_1=\arcsin r_0$ and $r_{-1}=\arsinh r_0$.

Conversely, if $M$ is minimal in a geodesic sphere $\Sigma_c$ of radius $r_c$ given as mentioned above, we can  obtain the equality in \eqref{eqthm} by using analogous arguments to that in Section 3.2.
\qed
\appendix
\section{Relations between the $k$-th Gauss-Bonnet curvature and the $r$-th mean curvature}

Recall that under the orthonormal frame, for $1\leq k\leq n/2$, the $k$-th Lovelock curvature $E^{(k)}_{ij}$, the $k$-th Gauss-Bonnet curvature $L_k$ and the $P_{(k)}$ curvature corresponding to $L_k$ are defined by (cf. \cite{Lov71, GWW14})
\begin{align*}
E^{(k)}_{ij}&:=-\sum\frac{1}{2^{k+1}}\delta^{i_1\ldots i_{2k}i}_{j_1\ldots j_{2k}j}R_{i_1i_2j_1j_2}\cdots R_{i_{2k-1}i_{2k}j_{2k-1}j_{2k}} \quad \mbox{($k<n/2$ is required)},\\
L_k&:=\sum\frac{1}{2^k}\delta^{i_1\ldots i_{2k}}_{j_1\ldots j_{2k}}R_{i_1i_2j_1j_2}\cdots R_{i_{2k-1}i_{2k}j_{2k-1}j_{2k}},\\
P_{(k)}^{stlm} &:=\sum\frac{1}{2^k}\delta^{i_1\ldots i_{2k-2}st}_{j_1\ldots j_{2k-2}lm}R_{i_1i_2j_1j_2}\cdots R_{i_{2k-3}i_{2k-2}j_{2k-3}j_{2k-2}}.
\end{align*}
We point out that $L_k$ denotes the $k$-th Gauss-Bonnet curvature in this appendix, and it cannot be confused with the operator $L_r$ defined in Section 2.

These curvature tensors are important in geometry and physics, and have been widely studied by by many mathematicians and physicists. For example, $E^{(k)}_{ij}$, introduced by Lovelock in \cite{Lov71},  is a generalization of the Einstein tensor $E^{(1)}_{ij}=R_{ij}-\frac{R}{2}\delta_{ij}$; $L_1=R$ is the scalar curvature, and $L_{n/2}$ is well-known as the Euler density which appears in the famous Gauss-Bonnet-Chern theorem; $P_{(k)}$ can be used to define the Gauss-Bonnet-Chern mass,  which generalizes the ADM mass. We refer the readers to \cite{GWW14,GWW14a} for more details.

We note that the generalized Kronecker delta has following the properties:
\begin{align}
&\delta^{i_1\ldots i_t \ldots  i_s \ldots i_l}_{j_1\ldots  j_t \ldots  j_s \ldots j_l}=-\delta^{i_1 \ldots i_s \ldots  i_t \ldots i_l}_{j_1\ldots  j_t \ldots  j_s \ldots j_l},\quad \mbox{for $1\leq t<s\leq l\leq n$},\label{eq-kron-prop1}\\
\sum_{i_{t+1},\ldots, i_l}&\delta^{i_1\ldots i_t i_{t+1}\ldots i_l}_{j_1\ldots j_t i_{t+1}\ldots i_l}=\frac{(n-t)!}{(n-l)!}\delta^{i_1\ldots i_t }_{j_1\ldots j_t},\quad  \mbox{for $1\leq t\leq l\leq n$},\label{eq-kron-prop2}\\
&\delta^{i_1\ldots  i_l}_{j_1\ldots j_l}=\delta^{i_l}_{j_l}\delta^{i_1\ldots  i_{l-1}}_{j_1\ldots j_{l-1}}-\sum_{t=1}^{l-1}\delta^{i_l}_{j_t}\delta^{i_1\ldots i_{t-1} i_t i_{t+1}\ldots  i_{l-1}}_{j_1\ldots j_{t-1} j_l j_{t+1}\ldots j_{l-1}},
\quad  \mbox{for $1\leq l\leq n$},\label{eq-kron-prop3}
\end{align}
It is not hard to obtain that
\begin{lem}\label{lem2.2}
	Set $E^{(0)}_{ij}=-\delta_{ij}$. For $1\leq k\leq n/2$, we have
	\begin{align*}
	P_{(k)}^{ijml}&=-P_{(k)}^{jiml}=-P_{(k)}^{ijlm}=P_{(k)}^{mlij},\\
	\sum_{s}P_{(k)}^{sisj}&=-(n-2k+1)E^{(k-1)}_{ij},\\
	E^{(k)}_{ij}&=-(\frac{1}{2}L_k\delta_{ij}-k\sum_{s,t,l}P_{(k)}^{stli}R_{stlj}),\\
	L_k&=\sum_{s,t,l,m}P_{(k)}^{stlm}R_{stlm},\\
	\tr E^{(k)}&=-\frac{n-2k}{2}L_k \quad \mbox{($k<n/2$ is required)}.
	\end{align*}
\end{lem}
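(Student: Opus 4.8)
The plan is to verify each of the five identities in Lemma~\ref{lem2.2} directly from the definitions of $E^{(k)}_{ij}$, $L_k$ and $P_{(k)}^{stlm}$ as contractions of products of curvature tensors against the generalized Kronecker delta, using only the three combinatorial properties \eqref{eq-kron-prop1}--\eqref{eq-kron-prop3} of the delta symbol together with the pair symmetries $R_{abcd}=-R_{bacd}=R_{cdab}$ of the Riemann tensor. No geometry beyond these symmetries is needed; the whole lemma is an exercise in index bookkeeping.

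First I would establish the symmetries of $P_{(k)}$. The antisymmetry $P_{(k)}^{ijml}=-P_{(k)}^{jiml}$ comes from swapping the free upper indices $i\leftrightarrow j$ in $\delta^{i_1\ldots i_{2k-2}ij}_{j_1\ldots j_{2k-2}ml}$, which introduces a sign by \eqref{eq-kron-prop1}; likewise $P_{(k)}^{ijml}=-P_{(k)}^{ijlm}$ by swapping $m\leftrightarrow l$ in the lower slots. The pair symmetry $P_{(k)}^{ijml}=P_{(k)}^{mlij}$ is obtained by relabelling the summation indices $(i_a,j_a)$ in pairs and using $R_{abcd}=R_{cdab}$ to match the two expressions; one checks the permutation of the delta that relabels $(i_1\ldots i_{2k-2}ij)$ against $(j_1\ldots j_{2k-2}ml)$ into the reflected arrangement is even. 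Next, the trace formula $\sum_s P_{(k)}^{sisj}=-(n-2k+1)E^{(k-1)}_{ij}$ follows by setting two of the ``free'' indices equal and summing: one applies \eqref{eq-kron-prop2} with $t=2k-2$, $l=2k-1$ to collapse the delta $\delta^{i_1\ldots i_{2k-2}si}_{j_1\ldots j_{2k-2}sj}$ down to $\delta^{i_1\ldots i_{2k-2}i}_{j_1\ldots j_{2k-2}j}$ up to the factor $(n-2k+2)!/(n-2k+1)!=n-2k+2$; reconciling the resulting power of $2$ and the factorial prefactors with the definition of $E^{(k-1)}_{ij}$ produces the stated coefficient $-(n-2k+1)$. (The exact constant is the place to be careful; I would pin it down by checking $k=1$, where $P_{(1)}^{stlm}=\frac12\delta^{st}_{lm}$ and $\sum_s P_{(1)}^{sisj}=\frac12(n-1)\delta_{ij}=-(n-1)E^{(0)}_{ij}$ since $E^{(0)}_{ij}=-\delta_{ij}$.)

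The remaining three identities are then quick consequences. For $E^{(k)}_{ij}=-(\tfrac12 L_k\delta_{ij}-k\sum_{s,t,l}P_{(k)}^{stli}R_{stlj})$ I would expand the delta in the definition of $E^{(k)}_{ij}$ along its last column using \eqref{eq-kron-prop3}: the leading term $\delta^i_j\,\delta^{i_1\ldots i_{2k}}_{j_1\ldots j_{2k}}$ reproduces $-\tfrac12 L_k\delta_{ij}$ after matching prefactors, while each of the $2k$ ``exchange'' terms, by the symmetries of $R$, contributes an equal amount, giving the $k\sum P_{(k)}^{stli}R_{stlj}$ piece (the $2k$ terms pair up into $k$ equal contributions because $R_{i_{2k-1}i_{2k}j_{2k-1}j_{2k}}$ is symmetric under simultaneous swap of the two index pairs). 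The contracted Bianchi-type identity $L_k=\sum_{s,t,l,m}P_{(k)}^{stlm}R_{stlm}$ is immediate on comparing definitions: $P_{(k)}^{stlm}$ is exactly the coefficient tensor obtained by peeling off one $R$ factor and relabelling, and summing $P_{(k)}^{stlm}R_{stlm}$ reassembles $L_k$ (again a factor-counting check, using that all $k$ copies of $R$ in $L_k$ play symmetric roles). Finally $\tr E^{(k)}=-\tfrac{n-2k}{2}L_k$ follows by setting $i=j$ in the third identity and summing: $\sum_i\delta_{ii}=n$, and $\sum_{s,t,l,i}P_{(k)}^{stli}R_{stli}=L_k$ by the identity just proved, so $\tr E^{(k)}=-(\tfrac{n}{2}-k)L_k$.

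The main obstacle is purely bookkeeping: getting every numerical prefactor right — the powers of $2$, the $1/k!$-type normalizations implicit in the delta contractions, and the counting of how many of the $2k$ Riemann factors contribute equal terms. I expect the trace identity $\sum_s P_{(k)}^{sisj}=-(n-2k+1)E^{(k-1)}_{ij}$ to be the most error-prone, since it simultaneously uses \eqref{eq-kron-prop2} to reduce the delta's length and must be cross-checked against the definition of $E^{(k-1)}$ at lower index count; I would verify it for $k=1$ and $k=2$ explicitly before asserting the general formula. Everything else is a routine, if tedious, manipulation of the generalized Kronecker symbol.
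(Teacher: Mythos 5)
Your overall strategy is the right one (the paper itself offers no proof of this lemma beyond ``it is not hard to obtain''), and your treatments of the symmetries of $P_{(k)}$, of the expansion $E^{(k)}_{ij}=-(\tfrac12 L_k\delta_{ij}-k\sum P_{(k)}^{stli}R_{stlj})$ via \eqref{eq-kron-prop3} with the $2k$ exchange terms pairing into $k$ equal contributions, of $L_k=\sum P_{(k)}^{stlm}R_{stlm}$, and of the trace formula for $E^{(k)}$ are all correct. The gap is concentrated in the one identity you yourself flagged as error-prone, $\sum_s P_{(k)}^{sisj}=-(n-2k+1)E^{(k-1)}_{ij}$, and it is a genuine numerical error, not just a presentational one. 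The delta being contracted, $\delta^{i_1\ldots i_{2k-2}si}_{j_1\ldots j_{2k-2}sj}$, has $2k$ upper indices; after moving $s$ to the last slot in both rows (two transpositions, signs cancel), \eqref{eq-kron-prop2} applies with $l=2k$ and $t=2k-1$, giving the factor $\frac{(n-2k+1)!}{(n-2k)!}=n-2k+1$ — not $t=2k-2$, $l=2k-1$ and $n-2k+2$ as you wrote. With the correct factor, comparison with $E^{(k-1)}_{ij}=-\frac{1}{2^{k}}\sum\delta^{i_1\ldots i_{2k-2}i}_{j_1\ldots j_{2k-2}j}\prod R$ gives the stated coefficient directly for all $k\ge 2$, with no need for an external anchor.

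Worse, the $k=1$ check you propose to ``pin down the constant'' is false as written: $\sum_s P_{(1)}^{sisj}=\tfrac12\sum_s(\delta_{ss}\delta_{ij}-\delta_{sj}\delta_{is})=\tfrac{n-1}{2}\delta_{ij}$, whereas $-(n-1)E^{(0)}_{ij}=(n-1)\delta_{ij}$ under the lemma's convention $E^{(0)}_{ij}=-\delta_{ij}$; your asserted equality $\tfrac12(n-1)\delta_{ij}=-(n-1)E^{(0)}_{ij}$ is off by a factor of $2$. Had you actually relied on this anchor you would have derived the wrong general coefficient. (What this really exposes is that $k=1$ is anomalous: the identity at $k=1$ holds only if one normalizes $E^{(0)}_{ij}=-\tfrac12\delta_{ij}$, the natural extension of the defining formula for $E^{(k)}$, rather than $-\delta_{ij}$ as the lemma sets it; note that in the proof of Lemma \ref{lem2.3} the $t=0$ term is used with exactly this contraction, so the normalization matters there.) To repair your argument, drop the $k=1$ anchoring and instead carry out the general-$k$ contraction with the correct parameters in \eqref{eq-kron-prop2}; then record separately that the $k=1$ case requires the renormalized $E^{(0)}$.
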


Now we show some relations between $P_{(k)}, L_k$ and $T_{r-1}$.

\begin{lem}\label{lem2.3}
	For a submanifold $M^n$ in $\R$ and $r=2k$ is even, we have
	\begin{align}
	\sum_{m,\alpha}T_{r-1\,mj}^{\alpha}h_{mi}^\alpha=
	&\frac{1}{(2k-1)!}\Bigg\{\frac{1}{2^k}\frac{1}{k}(E^{(k)}_{ij}+\frac{1}{2}L_k\delta_{ij})+(-c)^k\frac{(n-1)!}{(n-2k)!}\delta_{ij}\nonumber\\	 &+\sum_{t=1}^{k-1}(-c)^{k-t}2^{-t}\frac{(n-2t-1)!}{(n-2k)!}\binom{k-1}{t-1}\Bigg(\frac{n-k-t}{t}E^{(t)}_{ij}+\frac{n-2t}{2t}L_t\delta_{ij}\Bigg)\Bigg\}.
	\end{align}
\end{lem}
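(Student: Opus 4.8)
The plan is to expand the left-hand side using the definition of $T_{2k-1}$, rewrite it as a single contraction of a generalized Kronecker delta against second fundamental forms, and then convert everything to intrinsic curvature via the Gauss equation. Writing out the odd-index ($r-1=2k-1$) formula for $T_{r-1}$ and calling $m$ the slot contracted with $h^{\alpha}_{\cdot i}$, one gets
\[
\sum_{m,\alpha}T^{\alpha}_{r-1\,mj}h^{\alpha}_{mi}
=\frac{1}{(2k-1)!}\sum \delta^{i_1\cdots i_{2k-1}\,m}_{j_1\cdots j_{2k-1}\,j}\,
\langle A_{i_1j_1},A_{i_2j_2}\rangle\cdots\langle A_{i_{2k-3}j_{2k-3}},A_{i_{2k-2}j_{2k-2}}\rangle\,
\langle A_{i_{2k-1}j_{2k-1}},A_{m\,i}\rangle ,
\]
a contraction of the $2k$-index Kronecker delta against $k$ bilinears $\langle A_{\cdot},A_{\cdot}\rangle$; the free index $i$ in the last bilinear does not appear in the delta, and this \emph{distinguished} bilinear is what produces the asymmetry in the stated formula.

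Since every slot of the delta is antisymmetrized, each bilinear becomes intrinsic: antisymmetrizing $\langle A_{i_aj_a},A_{i_bj_b}\rangle$ in $i_a\leftrightarrow i_b$ and using the Gauss equation \eqref{gauss} replaces it, inside the contraction, by $\tfrac12 R_{i_ai_bj_aj_b}-c\,\delta_{i_aj_a}\delta_{i_bj_b}$, whereas the distinguished bilinear becomes $\tfrac12\big[R_{i_{2k-1}\,m\,j_{2k-1}\,i}-c(\delta_{i_{2k-1}j_{2k-1}}\delta_{m\,i}-\delta_{i_{2k-1}\,i}\delta_{m\,j_{2k-1}})\big]$, in which the index $i$ sits either inside the Riemann factor or inside a Kronecker factor. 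I would expand this product of $k$ substitutions and sort the terms by the number $t$ of Riemann factors retained ($0\le t\le k$), then split each group according to whether the distinguished bilinear is kept as a Riemann factor --- so $i$ lies in a Riemann factor, with $\binom{k-1}{t-1}$ choices for the remaining Riemann factors --- or is turned into a Kronecker pair, absorbing $i$ into the delta, with $\binom{k-1}{t}$ choices. In either case the $k-t$ Kronecker factors are absorbed into the delta by iterating the trace identity \eqref{eq-kron-prop2}, which reduces it to a $2t$-index delta in the first case and to a $(2t+1)$-index delta in the second, and contributes a falling factorial $\tfrac{(n-2t)!}{(n-2k)!}$ or $\tfrac{(n-2t-1)!}{(n-2k)!}$ respectively, together with the power $(-c)^{k-t}$.

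Finally, the $t$ retained Riemann factors contracted against the reduced delta are identified using Lemma \ref{lem2.2}. In the terms where $i$ lies in a Riemann factor, the expression equals $\sum_{a,b,l}P^{abl\,i}_{(t)}R_{ablj}$ up to the harmless swap $i\leftrightarrow j$ (allowed because $E^{(t)}$ and $L_t$ are symmetric), hence equals $\tfrac1t\big(E^{(t)}_{ij}+\tfrac12 L_t\delta_{ij}\big)$; in the terms where $i$ has been absorbed, the $(2t+1)$-index delta contracted against $t$ Riemann factors is, by the very definition of $E^{(t)}$, a multiple of $E^{(t)}_{ij}$, plus an $L_t\delta_{ij}$ contribution coming from the $\delta_{i_{2k-1}\,i}\delta_{m\,j_{2k-1}}$ part of the distinguished bilinear, which is extracted using $\sum_s P^{sisj}_{(t)}=-(n-2t+1)E^{(t-1)}_{ij}$ and $\tr E^{(t)}=-\tfrac{n-2t}{2}L_t$. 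Summing the two contributions for each $t$, dividing by $(2k-1)!$, and singling out the extreme cases $t=k$ and $t=0$ (which supply the $E^{(k)}$-term and the pure $c^k\delta_{ij}$-term) yields the claimed identity, the coefficients $\tfrac{n-k-t}{t}$ and $\tfrac{n-2t}{2t}$ arising when the two cases are combined; as a consistency check one should verify that $k=1$ collapses to $\Ric-(n-1)c\,I$, in agreement with \eqref{eq3.50}.

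The conceptual content is short, and the main obstacle is the bookkeeping. One must track carefully all the combinatorial factors (the binomials, a factor $\tfrac12$ per Gauss substitution, the falling factorials from iterating \eqref{eq-kron-prop2}, and the signs and powers of $c$), and, more delicately, carry out the reassembly of the retained Riemann factors: whether a given term contributes to $E^{(t)}_{ij}$ or to $L_t\delta_{ij}$ depends precisely on whether the free index $i$ stays inside a Riemann factor or is traced away into the Kronecker delta. It is exactly here that the contraction identities of Lemma \ref{lem2.2} --- and behind them the generalized-Kronecker-delta identities \eqref{eq-kron-prop1}--\eqref{eq-kron-prop3} --- are indispensable, and where the precise coefficients $\tfrac{n-k-t}{t}$ and $\tfrac{n-2t}{2t}$ are produced.
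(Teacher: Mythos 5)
Your proposal follows essentially the same route as the paper's own proof: expand the odd Newton tensor as a generalized-Kronecker-delta contraction, substitute the Gauss equation into each bilinear $\langle A,A\rangle$ using the antisymmetry of the delta (the paper phrases this as $\mathrm{LHS}=2^kQ_{ij}=\mathrm{RHS}$), expand by the number $t$ of retained Riemann factors with the same $\binom{k-1}{t-1}$ versus $\binom{k-1}{t}$ split according to whether the distinguished bilinear carrying the free index $i$ stays Riemannian, absorb the Kronecker pairs via \eqref{eq-kron-prop2}, and reassemble through the $P_{(t)}$, $E^{(t)}$, $L_t$ identities of Lemma \ref{lem2.2}. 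The only slight imprecision is in attributing the $L_t\delta_{ij}$ contribution partly to the absorbed case (in the paper it enters only through $\sum P_{(t)}^{slmj}R_{slmi}=\tfrac1t(E^{(t)}_{ij}+\tfrac12L_t\delta_{ij})$ in the Riemann-retained case, while the absorbed case yields a pure $E^{(t)}_{ij}$ term via $\sum_sP_{(t+1)}^{sisj}$), but this washes out when the two cases are combined into the stated coefficients.
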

\begin{proof}
	First, from the definition of $T_{r-1}$, we have
	\begin{align}
	\sum_{m,\alpha}T_{r-1\,mj}^{\alpha}h_{mi}^\alpha&=\frac{1}{(r-1)!}\sum\delta^{i_1\ldots i_{2k-1}m}_{j_1\ldots j_{2k-1}j}\Big(\prod_{s=1}^{k-1}\langle A_{i_{2s-1}j_{2s-1}},A_{i_{2s}j_{2s}}\rangle\Big)\langle A_{i_{r-1}j_{r-1}},A_{mi}\rangle\nonumber\\
	&=:\frac{1}{(r-1)!} Q_{ij}.\label{eq2.37}
	\end{align}
	Now we rewrite the Gauss equation \eqref{gauss} as follows:
	\begin{equation}\label{gauss4}
	\l A_{im}, A_{jl}\r-\l A_{il}, A_{jm}\r=R_{ijml}-(\delta_{im}\delta_{jl}-\delta_{il}\delta_{jm})c,
	\end{equation}
	by setting $(i,j,m,l)=(i_1,i_2,j_1,j_2),\cdots, (i_{2k-1},m,j_{2k-1},i)$ in \eqref{gauss4} respectively, we obtain $k$ equations. We have $\mathrm{LHS}=\mathrm{RHS}$, which are defined by
	\begin{align*}
	\mathrm{LHS}=&\sum\Big(\prod_{s=1}^{k-1}
	\big(\l A_{i_{2s-1}j_{2s-1}}, A_{i_{2s}j_{2s}}\r-\l A_{i_{2s-1}j_{2s}}, A_{i_{2s}j_{2s-1}}\r\big)\Big)\\
	&\cdot\big(\l A_{i_{r-2}j_{r-1}}, A_{mi}\r-\l A_{i_{r-1}i}, A_{j_{r-1}m}\r\big)\delta^{i_1\ldots i_{r-1}m}_{j_1\ldots j_{r-1}j}
	=2^k Q_{ij},\\
	\mathrm{RHS}=&\sum\Big(\prod_{s=1}^{k-1}
	\big(R_{i_{2s-1}i_{2s}j_{2s-1}j_{2s}}-c(\delta_{i_{2s-1}j_{2s-1}}\delta_{i_{2s}j_{2s}}-\delta_{i_{2s-1}j_{2s}} \delta_{i_{2s}j_{2s-1}})\big)\Big)\\
	&\cdot\big(R_{i_{r-1}mj_{r-1}i}-c(\delta_{i_{r-1}j_{r-1}}\delta_{mi}-\delta_{i_{r-1}i} \delta_{mj_{r-1}})\big)\delta^{i_1\ldots i_{r-1}m}_{j_1\ldots j_{r-1}j}.
	\end{align*}
	
	Using \eqref{eq-kron-prop1} and \eqref{eq-kron-prop2}, we have
	\begin{align*}
	\mathrm{RHS_1}:=&\sum\Big(\prod_{s=1}^{k-1}
	\big(R_{i_{2s-1}i_{2s}j_{2s-1}j_{2s}}-c(\delta_{i_{2s-1}j_{2s-1}}\delta_{i_{2s}j_{2s}}-\delta_{i_{2s-1}j_{2s}} \delta_{i_{2s}j_{2s-1}})\big)\Big)\delta^{i_1\ldots i_{r-1}m}_{j_1\ldots j_{r-1}j}\\
	=&\sum_{t=0}^{k-1}(-c)^{k-1-t}\binom{k-1}{t}\Bigg\{\sum\Big(\prod_{s=1}^{t}
	\big(R_{i_{2s-1}i_{2s}j_{2s-1}j_{2s}}\Big)\\
	&\Big(\prod_{s=t+1}^{k-1}(\delta_{i_{2s-1}j_{2s-1}}\delta_{i_{2s}j_{2s}}-\delta_{i_{2s-1}j_{2s}} \delta_{i_{2s}j_{2s-1}})\big)\Big)\delta^{i_1\ldots i_{r-1}m}_{j_1\ldots j_{r-1}j}\Bigg\}\\
	=&\sum_{t=0}^{k-1}(-c)^{k-1-t}\binom{k-1}{t}2^{k-1-t}\frac{(n-2(t+1))!}{(n-r)!}\Bigg\{\sum\Big(\prod_{s=1}^{t}
	\big(R_{i_{2s-1}i_{2s}j_{2s-1}j_{2s}}\Big)\delta^{i_1\ldots i_{2t}i_{r-1}m}_{j_1\ldots j_{2t}j_{r-1}j}\Bigg\}\\
	=&\sum_{t=0}^{k-1}(-c)^{k-1-t}\binom{k-1}{t}2^{k-1-t}\frac{(n-2(t+1))!}{(n-r)!}P_{(t+1)}^{i_{r-1}mj_{r-1}j}.
	\end{align*}
	Combining with Lemma \ref{lem2.2}, we have
	\begin{align}
	\mathrm{RHS}=&\sum \mathrm{RHS_1}\big(R_{i_{r-1}mj_{r-1}i}-c(\delta_{i_{r-1}j_{r-1}}\delta_{mi}-\delta_{i_{r-1}i} \delta_{mj_{r-1}})\big)\nonumber\\
	=&\sum_{t=1}^{k}(-c)^{k-t}\binom{k-1}{t-1}2^{k-t}\frac{(n-2t)!}{(n-r)!}\sum_{s,l,m}P_{(t)}^{slmj}R_{slmi}\nonumber\\
	&-\sum_{t=0}^{k-1}(-c)^{k-t}\binom{k-1}{t}2^{k-t}\frac{(n-2(t+1))!}{(n-r)!}(n-2(t+1)+1)E^{(t)}_{ij}\nonumber\\
	=&\sum_{t=1}^{k}(-c)^{k-t}\binom{k-1}{t-1}2^{k-t}\frac{(n-2t)!}{(n-r)!}\sum_{s,l,m}P_{(t)}^{slmj}R_{slmi}-\sum_{t=1}^{k}(-c)^{k-t+1}\binom{k-1}{t-1}2^{k-t+1}\frac{(n-2t+1)!}{(n-r)!}E^{(t-1)}_{ij}\nonumber\\
	=&\sum_{s,l,m}P_{(k)}^{slmj}R_{slmi}+(-c)^k2^{k}\frac{(n-1)!}{(n-2k)!}\delta_{ij}+\sum_{t=1}^{k-1}(-c)^{k-t}\binom{k-1}{t-1}2^{k-t}\frac{(n-2t)!}{(n-2k)!}\sum_{s,l,m}P_{(t)}^{slmj}R_{slmi}\nonumber\\
	&-\sum_{t=1}^{k-1}(-c)^{k-t}\binom{k-1}{t}2^{k-t}\frac{(n-2t-1)!}{(n-2k)!}E^{(t)}_{ij}\nonumber\\
	=&\sum_{s,l,m}P_{(k)}^{slmj}R_{slmi}+(-c)^k2^{k}\frac{(n-1)!}{(n-2k)!}\delta_{ij}\nonumber\\		 &+\sum_{t=1}^{k-1}(-c)^{k-t}2^{k-t}\frac{(n-2t-1)!}{(n-2k)!}\binom{k-1}{t-1}\Bigg((n-2t)\sum_{s,l,m}P_{(t)}^{slmj}R_{slmi}-\frac{k-t}{t}E^{(t)}_{ij}\Bigg)\nonumber\\
	=&\frac{1}{k}(E^{(k)}_{ij}+\frac{1}{2}L_k\delta_{ij})+(-c)^k2^{k}\frac{(n-1)!}{(n-2k)!}\delta_{ij}\nonumber\\
	&+\sum_{t=1}^{k-1}(-c)^{k-t}2^{k-t}\frac{(n-2t-1)!}{(n-2k)!}\binom{k-1}{t-1}\Bigg(\frac{n-2t}{t}(E^{(t)}_{ij}+\frac{1}{2}L_t\delta_{ij})-\frac{k-t}{t}E^{(t)}_{ij}\Bigg)\nonumber\\
	=&\frac{1}{k}(E^{(k)}_{ij}+\frac{1}{2}L_k\delta_{ij})+(-c)^k2^{k}\frac{(n-1)!}{(n-2k)!}\delta_{ij}\nonumber\\
	&+\sum_{t=1}^{k-1}(-c)^{k-t}2^{k-t}\frac{(n-2t-1)!}{(n-2k)!}\binom{k-1}{t-1}\Bigg(\frac{n-k-t}{t}E^{(t)}_{ij}+\frac{n-2t}{2t}L_t\delta_{ij}\Bigg)
	.\label{eq2-58}
	\end{align}
	Therefore, we get the conclusion from \eqref{eq2.37}, \eqref{eq2-58} and $Q_{ij}=\frac{1}{2^k}\mathrm{LHS}=\frac{1}{2^k}\mathrm{RHS}$.
\end{proof}
\begin{rem}
	R. Reilly obtained the relation in Lemma \ref{lem2.3} in the case $c=0, p=1$, using the Ricci tensor of degree $2k$ (see \cite{Rei73a} for details).
\end{rem}

\begin{bibdiv}
	\begin{biblist}
		
		\bib{AdCC93}{article}{
			author={Alencar, Hil{\'a}rio},
			author={do~Carmo, Manfredo},
			author={Colares, Ant\^onio~Gervasio},
			title={Stable hypersurfaces with constant scalar curvature},
			date={1993},
			ISSN={0025-5874},
			journal={Math. Z.},
			volume={213},
			number={1},
			pages={117\ndash 131},
			url={https://doi.org/10.1007/BF03025712},
			review={\MR{1217674}},
		}
		
		\bib{AdCR93}{article}{
			author={Alencar, Hil{\'a}rio},
			author={do~Carmo, Manfredo},
			author={Rosenberg, Harold},
			title={On the first eigenvalue of the linearized operator of the {$r$}th
				mean curvature of a hypersurface},
			date={1993},
			ISSN={0232-704X},
			journal={Ann. Global Anal. Geom.},
			volume={11},
			number={4},
			pages={387\ndash 395},
			url={http://dx.doi.org/10.1007/BF00773553},
			review={\MR{1246197}},
		}
		
		\bib{BC97}{article}{
			author={Barbosa, Jo\~ao Lucas~Marques},
			author={Colares, Ant\^onio~Gervasio},
			title={Stability of hypersurfaces with constant {$r$}-mean curvature},
			date={1997},
			ISSN={0232-704X},
			journal={Ann. Global Anal. Geom.},
			volume={15},
			number={3},
			pages={277\ndash 297},
			url={https://doi.org/10.1023/A:1006514303828},
			review={\MR{1456513}},
		}
		
		\bib{CL07}{article}{
			author={Cao, Linfen},
			author={Li, Haizhong},
			title={{$r$}-minimal submanifolds in space forms},
			date={2007},
			ISSN={0232-704X},
			journal={Ann. Global Anal. Geom.},
			volume={32},
			number={4},
			pages={311\ndash 341},
			url={http://dx.doi.org/10.1007/s10455-007-9064-x},
			review={\MR{2346221}},
		}
		
		\bib{Che73a}{book}{
			author={Chen, Bang-Yen},
			title={Geometry of submanifolds},
			publisher={Marcel Dekker, Inc., New York},
			date={1973},
			note={Pure and Applied Mathematics, No. 22},
			review={\MR{0353212}},
		}
		
		\bib{Che74}{article}{
			author={Chen, Bang-Yen},
			title={Some conformal invariants of submanifolds and their
				applications},
			date={1974},
			journal={Boll. Un. Mat. Ital. (4)},
			volume={10},
			pages={380\ndash 385},
			review={\MR{0370436}},
		}
		
		\bib{Che84}{book}{
			author={Chen, Bang-Yen},
			title={Total mean curvature and submanifolds of finite type},
			series={Series in Pure Mathematics},
			publisher={World Scientific Publishing Co., Singapore},
			date={1984},
			volume={1},
			ISBN={9971-966-02-6; 9971-966-03-4},
			url={https://doi.org/10.1142/0065},
			review={\MR{749575}},
		}
		
		\bib{CW13}{article}{
			author={Chen, Hang},
			author={Wang, Xianfeng},
			title={Stability and eigenvalue estimates of linear {W}eingarten
				hypersurfaces in a sphere},
			date={2013},
			ISSN={0022-247X},
			journal={J. Math. Anal. Appl.},
			volume={397},
			number={2},
			pages={658\ndash 670},
			url={http://dx.doi.org/10.1016/j.jmaa.2012.08.003},
			review={\MR{2979602}},
		}
		
		\bib{CY77}{article}{
			author={Cheng, Shiu~Yuen},
			author={Yau, Shing~Tung},
			title={Hypersurfaces with constant scalar curvature},
			date={1977},
			ISSN={0025-5831},
			journal={Math. Ann.},
			volume={225},
			number={3},
			pages={195\ndash 204},
			review={\MR{0431043}},
		}
		
		\bib{Chern1968}{book}{
			author={Chern, Shiing-Shen},
			title={Minimal submanifolds in a {R}iemannian manifold},
			series={University of Kansas, Department of Mathematics Technical Report
				19 (New Series)},
			publisher={Univ. of Kansas, Lawrence, Kan.},
			date={1968},
			review={\MR{0248648}},
		}
		
		\bib{ESI92a}{article}{
			author={El~Soufi, Ahmad},
			author={Ilias, Sa{\"{\i}}d},
			title={Majoration de la seconde valeur propre d'un op\'erateur de
				{S}chr\"odinger sur une vari\'et\'e compacte et applications},
			date={1992},
			ISSN={0022-1236},
			journal={J. Funct. Anal.},
			volume={103},
			number={2},
			pages={294\ndash 316},
			url={https://doi.org/10.1016/0022-1236(92)90123-Z},
			review={\MR{1151550}},
		}
		
		\bib{ESI92}{article}{
			author={El~Soufi, Ahmad},
			author={Ilias, Sa{\"{\i}}d},
			title={Une in\'egalit\'e du type ``{R}eilly'' pour les sous-vari\'et\'es
				de l'espace hyperbolique},
			date={1992},
			ISSN={0010-2571},
			journal={Comment. Math. Helv.},
			volume={67},
			number={2},
			pages={167\ndash 181},
			url={http://dx.doi.org/10.1007/BF02566494},
			review={\MR{1161279}},
		}
		
		\bib{ESI00}{article}{
			author={El~Soufi, Ahmad},
			author={Ilias, Sa{\"{\i}}d},
			title={Second eigenvalue of {S}chr\"odinger operators and mean
				curvature},
			date={2000},
			ISSN={0010-3616},
			journal={Comm. Math. Phys.},
			volume={208},
			number={3},
			pages={761\ndash 770},
			url={http://dx.doi.org/10.1007/s002200050009},
			review={\MR{1736334}},
		}
		
		\bib{GWW14a}{article}{
			author={Ge, Yuxin},
			author={Wang, Guofang},
			author={Wu, Jie},
			title={The {G}auss-{B}onnet-{C}hern mass of conformally flat manifolds},
			date={2014},
			ISSN={1073-7928},
			journal={Int. Math. Res. Not. IMRN},
			number={17},
			pages={4855\ndash 4878},
			url={https://doi.org/10.1093/imrn/rnt095},
			review={\MR{3257553}},
		}
		
		\bib{GWW14}{article}{
			author={Ge, Yuxin},
			author={Wang, Guofang},
			author={Wu, Jie},
			title={A new mass for asymptotically flat manifolds},
			date={2014},
			ISSN={0001-8708},
			journal={Adv. Math.},
			volume={266},
			pages={84\ndash 119},
			url={https://doi.org/10.1016/j.aim.2014.08.006},
			review={\MR{3262355}},
		}
		
		\bib{Gro00}{article}{
			author={Grosjean, Jean-Fran\c{c}ois},
			title={A {R}eilly inequality for some natural elliptic operators on
				hypersurfaces},
			date={2000},
			ISSN={0926-2245},
			journal={Differential Geom. Appl.},
			volume={13},
			number={3},
			pages={267\ndash 276},
			url={http://dx.doi.org/10.1016/S0926-2245(00)00024-3},
			review={\MR{1799878}},
		}
		
		\bib{Gro02}{article}{
			author={Grosjean, Jean-Fran\c{c}ois},
			title={Upper bounds for the first eigenvalue of the {L}aplacian on
				compact submanifolds},
			date={2002},
			ISSN={0030-8730},
			journal={Pacific J. Math.},
			volume={206},
			number={1},
			pages={93\ndash 112},
			url={https://doi.org/10.2140/pjm.2002.206.93},
			review={\MR{1924820}},
		}
		
		\bib{GL13}{article}{
			author={Guo, Xi},
			author={Li, Haizhong},
			title={Submanifolds with constant scalar curvature in a unit sphere},
			date={2013},
			ISSN={0040-8735},
			journal={Tohoku Math. J. (2)},
			volume={65},
			number={3},
			pages={331\ndash 339},
			url={http://dx.doi.org/10.2748/tmj/1378991019},
			review={\MR{3102538}},
		}
		
		\bib{LW12}{article}{
			author={Li, Haizhong},
			author={Wang, Xianfeng},
			title={Second eigenvalue of a {J}acobi operator of hypersurfaces with
				constant scalar curvature},
			date={2012},
			ISSN={0002-9939},
			journal={Proc. Amer. Math. Soc.},
			volume={140},
			number={1},
			pages={291\ndash 307},
			url={http://dx.doi.org/10.1090/S0002-9939-2011-10892-X},
			review={\MR{2833541}},
		}
		
		\bib{LY82}{article}{
			author={Li, Peter},
			author={Yau, Shing~Tung},
			title={A new conformal invariant and its applications to the {W}illmore
				conjecture and the first eigenvalue of compact surfaces},
			date={1982},
			ISSN={0020-9910},
			journal={Invent. Math.},
			volume={69},
			number={2},
			pages={269\ndash 291},
			url={http://dx.doi.org/10.1007/BF01399507},
			review={\MR{674407}},
		}
		
		\bib{Lov71}{article}{
			author={Lovelock, David},
			title={The {E}instein tensor and its generalizations},
			date={1971},
			ISSN={0022-2488},
			journal={J. Mathematical Phys.},
			volume={12},
			pages={498\ndash 501},
			url={https://doi.org/10.1063/1.1665613},
			review={\MR{0275835}},
		}
		
		\bib{MR86}{article}{
			author={Montiel, Sebasti{\'a}n},
			author={Ros, Antonio},
			title={Minimal immersions of surfaces by the first eigenfunctions and
				conformal area},
			date={1986},
			ISSN={0020-9910},
			journal={Invent. Math.},
			volume={83},
			number={1},
			pages={153\ndash 166},
			url={http://dx.doi.org/10.1007/BF01388756},
			review={\MR{813585}},
		}

		\bib{Per04}{article}{
			author={Perdomo, Oscar},
			title={On the average of the scalar curvature of minimal hypersurfaces
				of spheres with low stability index},
			date={2004},
			ISSN={0019-2082},
			journal={Illinois J. Math.},
			volume={48},
			number={2},
			pages={559\ndash 565},
			url={http://projecteuclid.org/euclid.ijm/1258138398},
			review={\MR{2085426}},
		}
		
		\bib{Rei73a}{article}{
			author={Reilly, Robert~C.},
			title={On the {H}essian of a function and the curvatures of its graph},
			date={1973},
			ISSN={0026-2285},
			journal={Michigan Math. J.},
			volume={20},
			pages={373\ndash 383},
			url={http://projecteuclid.org/euclid.mmj/1029001155},
			review={\MR{0334045}},
		}
		
		\bib{Rei73}{article}{
			author={Reilly, Robert~C.},
			title={Variational properties of functions of the mean curvatures for
				hypersurfaces in space forms},
			date={1973},
			ISSN={0022-040X},
			journal={J. Differential Geometry},
			volume={8},
			pages={465\ndash 477},
			review={\MR{0341351}},
		}
		
		\bib{Rei77}{article}{
			author={Reilly, Robert~C.},
			title={On the first eigenvalue of the {L}aplacian for compact
				submanifolds of {E}uclidean space},
			date={1977},
			ISSN={0010-2571},
			journal={Comment. Math. Helv.},
			volume={52},
			number={4},
			pages={525\ndash 533},
			review={\MR{0482597}},
		}
		
		\bib{Ros83}{article}{
			author={Ros, Antonio},
			title={Spectral geometry of {CR}-minimal submanifolds in the complex
				projective space},
			date={1983},
			ISSN={0386-5991},
			journal={Kodai Math. J.},
			volume={6},
			number={1},
			pages={88\ndash 99},
			url={http://projecteuclid.org/euclid.kmj/1138036666},
			review={\MR{698330}},
		}
		
		\bib{Ros93}{article}{
			author={Rosenberg, Harold},
			title={Hypersurfaces of constant curvature in space forms},
			date={1993},
			ISSN={0007-4497},
			journal={Bull. Sci. Math.},
			volume={117},
			number={2},
			pages={211\ndash 239},
			review={\MR{1216008}},
		}
		
		\bib{Tak66}{article}{
			author={Takahashi, Tsunero},
			title={Minimal immersions of {R}iemannian manifolds},
			date={1966},
			ISSN={0025-5645},
			journal={J. Math. Soc. Japan},
			volume={18},
			pages={380\ndash 385},
			url={http://dx.doi.org/10.2969/jmsj/01840380},
			review={\MR{0198393}},
		}
		
		\bib{Urbano90}{article}{
			author={Urbano, Francisco},
			title={Minimal surfaces with low index in the three-dimensional sphere},
			date={1990},
			ISSN={0002-9939},
			journal={Proc. Amer. Math. Soc.},
			volume={108},
			number={4},
			pages={989\ndash 992},
			url={https://doi.org/10.2307/2047957},
			review={\MR{1007516}},
		}
		
	\end{biblist}
\end{bibdiv}

\end{document}